\newtheorem{theorem}{Theorem}[section]
\newtheorem{lemma}[theorem]{Lemma}
\newtheorem{proposition}[theorem]{Proposition}
\theoremstyle{definition}
\newtheorem{definition}[theorem]{Definition}
\newtheorem{remark}[theorem]{Remark}
\newtheorem{observation}[theorem]{Observation}
\newtheorem{example}[theorem]{Example}
\DeclareSymbolFont{bbold}{U}{bbold}{m}{n}
\DeclareSymbolFontAlphabet{\mathbbold}{bbold}
\newcommand{\C}{\mathbb{C}}
\newcommand{\N}{\mathbb{N}}
\newcommand{\R}{\mathbb{R}}
\newcommand{\cA}{\mathcal{A}}
\newcommand{\cL}{\mathcal{L}}
\newcommand{\Q}{\mathcal{Q}}
\newcommand{\h}{\mathfrak{h}}
\newcommand{\U}{\mathfrak{U}}
\newcommand{\mkU}{U\mkern-1mu}
\newcommand{\mkUhat}{\Uhat\mkern-1mu}
\newcommand{\shift}[1]{\mkern-0.5mu#1\mkern1mu}
\newcommand{\Up}{{U\mkern-3mu,\mkern1mup}}
\newcommand{\lrho}{{\!\rho}}
\newcommand{\chic}{\chi_{\rm c}}
\newcommand{\smedmuskip}{\mskip 2.0mu plus 1.0mu minus 2.0mu}
\newcommand{\scirc}{\smedmuskip{\circ}\smedmuskip}
\newcommand{\bplusb}{b_1\mkern-1mu\smedmuskip{+}\smedmuskip b_2}
\newcommand{\tomega}{\widetilde\omega}
\newcommand{\homega}{\widehat\omega}
\newcommand{\Uhat}{\widehat U}
\newcommand{\Utilde}{\widetilde U}
\newcommand{\sUtilde}{\smash{\Utilde}}
\newcommand{\Bhat}{\widehat B}
\newcommand{\s}{{\rm s}}
\renewcommand{\a}{{\rm a}}
\numberwithin{equation}{section}
\newcommand{\from}{\colon}
\newcommand{\ind}{\mathbbold{1}}
\newcommand{\snabla}{\nabla\mkern-1mu}
\renewcommand{\div}{\operatorname{div}}
\newcommand{\sgn}{\operatorname{sgn}}
\renewcommand\le{\leqslant}
\renewcommand\ge{\geqslant}
\renewcommand\subset{\subseteq}
\renewcommand\supset{\supseteq}
\newcommand{\set}[2]{\bigl\{#1{:}\;#2\bigr\}}
\newcommand{\sset}[2]{\{#1{:}\;#2\}}
\newcommand{\restrict}[1]{{\restriction}_{#1}}
\newcommand{\inter}[1]{\accentset{\smash{\raisebox{-0.15ex}{$\scriptstyle\circ$}}}{#1}\rule{0pt}{2.3ex}}
\newcommand{\sinter}[1]{\smash{\inter{#1}}}
\newcommand{\gen}{\mathcal{L}}
\newcommand{\loc}{{\rm loc}}
\renewcommand{\c}{{\rm c}}
\newcommand{\rfrac}[2]{\tfrac{#1}{\raisebox{0.1em}{$\scriptstyle #2$}}}
\newcommand{\lfrac}[2]{\frac{\raisebox{-0.15em}{$#1$}}{#2}}
\newcommand{\twop}{\frac{2}{\raisebox{0.07em}{$\scriptscriptstyle p$}}}
\newcommand\pstrich{{\raisebox{0.15ex}{$\scriptstyle p\mkern0.2mu$}'}}
\newcommand{\RRe}{\operatorname{Re}}
\newcommand{\IIm}{\operatorname{Im}}
\newcommand{\<}{\langle}
\renewcommand{\>}{\rangle}
\newcommand{\eps}{\varepsilon}
\newcommand{\form}{\mathfrak{t}}
\newcommand{\amax}{\mathfrak{a}}
\newcommand{\dual}[2]{\bigl(#1,#2\bigr)}
\newcommand{\inner}[2]{\bigl\<#1,#2\bigr\>}
\newcommand{\+}{\hspace{0.08em}}
\newcommand{\olwn}{\mkern1mu\overline{\mkern-1mu w_n \mkern-1.5mu}\mkern1.5mu}
\newcommand{\chiind}[1]{\ind_{ \left[|u|^{p-2} #1^2\rule{0ex}{1.3ex}\right] }}
\newcommand{\avoidbreak}{\postdisplaypenalty50}
\newcounter{teller}
\newcounter{tellerint}
\newenvironment{Rmlist}{\begin{list}
{\rm (\Roman{tellerint})\hfill}
{\usecounter{tellerint}\setcounter{tellerint}{\value{teller}}\addtocounter{teller}{1}%
\leftmargin=12mm \labelwidth=12mm%
\labelsep=0cm \parsep=0cm}}{\setcounter{teller}{\value{tellerint}}\end{list}}
\newlength{\widthoftheindent}
\newcounter{alpteller}
\newcommand{\dnepbbl}{\@beginparpenalty=0}  
\newenvironment{alplist}[1][]{%
\dnepbbl
\begin{list}%
{\ifthenelse{\equal{#1\arabic{alpteller}}{1}}
            {\rm (\alph{alpteller}) }
            {\hskip\widthoftheindent\llap{\rm(\alph{alpteller})} }}
{\usecounter{alpteller} \leftmargin=0cm
\labelwidth=0pt \labelsep=0pt \itemindent=0pt
\parsep=0cm \topsep=0cm \itemsep=0cm \listparindent=\parindent}}
{\end{list}}
\newcommand\MoveEqLeft[1][2]{%
  \global\@tempdima=#1em%
  \kern\@tempdima%
  &
  \kern-\@tempdima}
\newcommand\alignstrut{\rule{0pt}{2.7ex}}
\begin{document}

\belowdisplayshortskip=\belowdisplayskip
\newcommand\smallbds{\belowdisplayskip=6.5pt plus3.5pt minus3pt
                     \belowdisplayshortskip=\belowdisplayskip}

\title{On the $L^p$-theory of $C_0$-semigroups \\ 
associated with second-order elliptic operators \\
with complex singular coefficients}
\author{A.F.M. ter Elst, Vitali Liskevich\footnote{%
After the main results of this paper were obtained, Vitali Liskevich passed away.
Then the remaining authors rearranged and polished the manuscript.}\:,
Zeev Sobol, Hendrik Vogt}

\date{}
\maketitle

\begin{abstract}
We study $L^p$-theory of second-order elliptic divergence type
operators with complex measurable coefficients. The major aspect
is that we allow complex coefficients in the main part of the operator, too.
We investigate generation of analytic
$C_0$-semigroups under very general conditions on the
coefficients, related to the notion of form-boundedness.

We determine an interval $J$ in the $L^p$-scale, not necessarily containing $p=2$,
in which one obtains a consistent family of quasi-contractive semigroups.
This interval is close to optimal, as shown by several examples.
In the case of uniform ellipticity we construct a family of
semigroups in an extended range of $L^p$-spaces, and we
prove $p$-independence of the analyticity sector and of the spectrum
of the generators.

\vspace{8pt}

\noindent
MSC 2010: 35J15, 47F05, 47B44, 47D06
\vspace{2pt}

\noindent
Keywords: $C_0$-semigroup, second-order elliptic operator, $L^p$-accretivity, $L^p$-spectrum
\end{abstract}

\section{Introduction and main results}\label{intro}

The aim of this paper is to develop the $L^p$-theory of second-order 
elliptic differential operators on an open set
$\Omega\subset\R^N$, where $N\in\N$, corresponding to the
formal differential expression
\[
  \cL = -\snabla \cdot(\cA\nabla ) + b_1\cdot\nabla + \snabla \cdot b_2 + Q,
\]
with \emph{complex} measurable coefficients $\cA\colon
\Omega\to \C^{N\times N}$,\, $b_1,b_2 \colon \Omega\to\C^N$ and
$Q \colon \Omega\to\nobreak\C$.
We present rather general sufficient conditions on the operator~$\cL$ under which,
for $p$ in some subinterval of $[1,\infty)$,
one can construct a $C_0$-semi\-group~$S_p$
on $L^p:=L^p(\Omega)$ whose generator is associated with $\cL$ in a natural way
(which will be made precise below).
Moreover, we give sufficient conditions for quasi-contractivity and for analyticity of this semigroup.

The approach via $C_0$-semigroups is one of the
traditional approaches to solving the initial value problem for
parabolic partial differential equations in different Banach function spaces.
Various examples are well documented in, e.g., \cite{D,EN,P},
and in some monographs \cite{Amann,lunardi} semigroups are the main tool for the study of parabolic equations.

A classical approach to the construction of $C_0$-semigroups
on $L^p$ generated by divergence type second-order elliptic operators
with measurable coefficients is the form method in $L^2$.
The sesquilinear form corresponding to $\cL$ is given by
\[
  \form (u,v)
  =   \int_\Omega \inner{\cA \nabla u}{\nabla v}
    + \int_\Omega\inner{b_1\cdot \nabla u}{v}
    - \int_\Omega\inner{b_2\+u}{\nabla v}
    + \int_\Omega\inner{Q\+u}{v},
\]
where the domain $D(\form)$ will be specified below; cf.~\eqref{form-def}.
Here, $\<\cdot,\cdot\>$ denotes the scalar product in $\C^N$ resp.~$\C$ as appropriate.

If $\form$ is a densely defined closed sectorial form, then by the Kato representation theorem \cite[Theorem~VI.2.1]{kato80}
there is a unique $m$-sectorial operator $\gen$ associated with the form
$\form$, which is minus the generator of an analytic quasi-contractive
$C_0$-semigroup $S = (e^{-t\gen})_{t\ge0}$ on $L^2$ (see also the recent paper
\cite{AE1} for a modification of the approach that applies to \emph{non-closable} sectorial forms).
In order to construct the $C_0$-semigroup on $L^p$, one looks at the restrictions
$S(t)\restriction{L^2\cap L^p}$ and studies
whether they can be extended to bounded operators on $L^p$ forming a $C_0$-semigroup $S_p$.
Then, by construction, $S_p$ is consistent with the semigroup~$S$.

A relatively simple case is when the semigroup $S$ leaves invariant the unit balls
of both $L^1$ and $L^\infty$. This property can be verified using form techniques
(see, e.g., \cite[Theorems~2.13, 2.14 and Corollary~2.16]{Ou} for the criteria).
By interpolation, $S$ can then be extended to a consistent family
of $C_0$-semigroups $S_p$ of contractions on $L^p$, $1\le p\le\infty$
(with weak$^*$-continuity on $L^\infty$).

Another possible approach to constructing a consistent family of $C_0$-semi\-groups in the $L^p$-scale
is via integral kernel estimates.
If $S$ has a kernel satisfying Gaussian upper bounds (or Poisson upper bounds),
then one easily sees that $S$ extends to a $C_0$-semigroup on $L^p$ for all $p \in [1,\infty)$.
Suppose, e.g., that $\Omega=\R^N$, that all the coefficients of the operator are bounded
and that the matrix $\cA$ is uniformly elliptic.
If the coefficients are real-valued, then Gaussian bounds are valid by
the classical result of Aronson \cite{aro68}.
More recently it was shown that Gaussian bounds hold if merely
the coefficients of the principle part of~$\gen$ are real-valued,
or if they are uniformly continuous; see \cite{Aus1},~\cite[Theorems~6.10 and~6.11]{Ou}.
In dimensions $N = 1$ and $N = 2$ these additional assumptions are not
needed, by \cite{AMT1}.

If the coefficients of the principle part are complex-valued, then in general
Gaussian kernel bounds are no longer valid:
in \cite[Proposition~3]{ACT} it is shown that for $N\ge5$ there exists a complex
uniformly elliptic Gilbarg--Serrin matrix $\mathcal{A}$ such that the
$C_0$-semigroup $S$ on $L^2(\R^N)$ generated by $\snabla\cdot\cA\nabla$
does not extrapolate to a semigroup of bounded operators on $L^\infty$.
Using \cite[Proposition~2.10]{HMM} and a scaling argument one obtains a
stronger result: given $N\ge3$ and $p\notin[\frac{2N}{N+2},\frac{2N}{N-2}]$,
one can choose $\cA$ such that $S$ does not extrapolate to a semigroup
of bounded operators on $L^p$.

Even if the coefficients of $\cL$ are real-valued,
the presence of singular lower-order terms can
result in a restricted range of~$p$ for which one obtains
a semigroup on $L^p$ corresponding to $\cL$.
A typical example of such a situation is the
Schr\"odinger semigroup with negative Hardy potential, which corresponds to the Cauchy problem
\begin{equation}\label{S-H}
u_t = \Delta u + \beta\tfrac{(N-2)^2}{4} |x|^{-2}u,
\quad u(0) = f \in L^1(\R^N) \cap L^\infty(\R^N),
\end{equation}
where $\beta\in (0,1)$.
To describe the existence of the $C_0$-semigroup on $L^p(\R^N)$,
we assume that $N\ge 3$.
The $L^2$-semigroup corresponding to~\eqref{S-H}
can be extrapolated to a contraction semigroup on $L^p$ for all
$p\in [p_-,p_+]$, where $p_\mp = \frac2{1\pm\sqrt{1-\beta}}$
(see \cite[Theorem~1]{BS}),
to a non-quasi-contractive $C_0$-semigroup for all
$p\in (p_{\min},p_-) \cup (p_+,p_{\max})$,
with $p_{\max} = \frac{N}{N-2}p_+$ and $p_{\min}=\frac{p_{\max}}{p_{\max}-1}$
(cf.\ \cite[Theorem~6.1(b)]{kps81}),
whereas for all $p\in[1,\infty)$ outside of the interval $(p_{\min},p_{\max})$ there is
no $C_0$-semigroup on $L^p$ corresponding to~\eqref{S-H},
and the Cauchy problem is not well-posed (see \cite[Section~4]{lsv02}).

Another example comes from the non-symmetric situation of divergence type operators
with singular drift terms (see \cite{KS,L}):
again one obtains a quasi-contractive semigroup $S_p$ only for $p$
in a certain subinterval of $[1,\infty)$.

Quasi-contractivity of $S_p$ is of course intimately related to (formal) accretivity
of~$\cL$ in~$L^p$ (or formal dissipativity of $-\cL$).
For the general differential expression~$-\cL$, the question of
formal dissipativity in $L^p$ was studied in \cite{CM2006},
and a necessary and sufficient condition was obtained. One of the
main results of the present paper (Theorem~\ref{mainresult2}) shows that a very close
condition guarantees existence of the corresponding quasi-contractive
$C_0$-semigroup on $L^p$. In particular cases our condition coincides with the
condition of \cite{CM2006}, namely when the imaginary part $\cA_1$ of $\cA$ is
anti-symmetric, or when the lower-order terms are absent and $\cA_1$ is symmetric.
See also Remark~\ref{mainresult-rem}\ref{mainresult-rem(e)}.

In previous work, the question of generation of $C_0$-semi\-groups on~$L^p$
by elliptic operators with complex coefficients has only been studied
under restricted assumptions.  In \cite[Section~5]{CM2006}, e.g.,
some smoothness of the coefficients is assumed, and in \cite[Chapter~4]{Ou},
only the case of anti-symmetric $\cA_1$ is considered.

\bigskip

In general the form $\form$ need not be sectorial, so that
Kato's representation theorem cannot be used directly.
The initial approach to studying this case
was by approximating the singular coefficients by bounded ones
(cf.\ \cite[Theorem~6]{L}).
A more powerful approach was developed in \cite{sv02},
where general second-order formal differential expressions with \emph{real-valued} coefficients were studied,
and the natural functional responsible for the accretivity in $L^p$ was identified.
A positive potential $U$ was introduced that `absorbs' all the
singularities of the lower-order terms of the differential expression,
in the sense that, being added to the corresponding sesquilinear form,
it makes the resulting form sectorial in~$L^2$.
Then a quasi-contractive $C_0$-semi\-group on $L^p$ was constructed by an approximation
procedure that removes the added potential.
A crucial ingredient in the realization of this approximation idea was
the perturbation theory for positive semigroups developed in \cite{voi86, voi88}.

The main result of \cite{sv02} establishes the precise interval in the $L^p$-scale
where there exists a \emph{quasi-contractive} $C_0$-semi\-group
corresponding to the formal differential expression~$\cL$.
In \cite{lsv02} it was shown that in the case of a uniformly elliptic principal part of~$\gen$,
the interval in the $L^p$-scale can be extended if one allows
for non-quasi-contractive semigroups,
and an example was given where this extended interval is
the maximal interval of existence of the semigroup.
The main technique of \cite{lsv02} is the technique of weighted estimates,
which provides additional information such as $p$-independence
of the sector of analyticity of the semigroups
and $p$-independence of the spectrum of the generators.

In this paper we pursue the same goals as in the previous papers \cite{sv02,lsv02},
with the significant difference that the coefficients are allowed to be complex-valued.
As in \cite{sv02} we do not assume sectoriality of the form associated with $\cL$,
so we follow the idea described above: add an auxiliary positive potential $U$ to the form
to make it sectorial, and remove $U$ in $L^p$ by approximation
to obtain a quasi-contractive $C_0$-semigroup on $L^p$ corresponding to $\cL$.
The range of $p$ for which this is possible is determined by a
family of functionals $\tau_p$
(computed from the coefficients of $\cL$) that we specify in~\eqref{tau_p}.

We point out that the main tool of \cite{sv02}
-- namely, the perturbation theory for positive semigroups --
is not applicable in the presence of non-real coefficients and other tools must be used.
As a result, we need stronger assumptions on the auxiliary potential $U$;
cf.\ Remark~\ref{rem1}\ref{rem1(d)}.

In the case of a uniformly elliptic principal part of $\cL$,
we use the technique of weighted estimates as in \cite{lsv02}
to extend the interval in the $L^p$-scale of existence of the semigroup.
The $C_0$-semi\-groups thus obtained are analytic with
$p$-independent sector of analyticity and spectrum of the generators.

\bigskip

Before formulating and discussing the assumptions on the coefficients of $\cL$,
we introduce some notation.
Throughout, all the function spaces will consist of complex-valued functions.
By $\<\xi,\eta\> = \xi \cdot \overline\eta$ we denote the inner product of $\xi,\eta \in \C^N$.
If $f,g \colon \Omega \to \C$ or $f,g \colon \Omega \to \C^N$ are measurable functions,
then we define
$(f,g) := \int_\Omega f \cdot \overline g$
(integration with respect to Lebesgue measure)
whenever $f \cdot \overline g \in L^1$.
Given $p \in [1,\infty]$, we let $p'$ denote the dual exponent,
$\frac{1}{p'}+\frac 1p = 1\rule[-1.3ex]{0pt}{0pt}$.

Let $U \colon \Omega \to \C$ be a measurable function.
Then we also consider $U$ as a sesquilinear form in $L^2$ with domain
$\Q(U) = \sset{ u \in L^2 }{ U |u|^2 \in L^1 }$ by setting $U(u,v) =
\int U u \+ \overline v$ for all $u,v \in \Q(U)$.
If $\h$ is a sesquilinear form in $L^2$, then the sum $\h+U$
is defined by $(\h+U)(u,v) := \h(u,v) + U(u,v)$ on $D(\h+U) := D(\h)\cap \Q(U)$.
In the case of symmetric $\h$ and real-valued $U$
we say that $\mkU \le \h$ if $D(\h) \subset \Q(U)$ and $U(u) \le \h(u)$ for
all $u\in D(\h)$.

For given $p,q\in[1,\infty]$ and a linear operator $B\from L^p\to L^q$ we denote its norm by
$\|B\|_{p\to q}$.

\medskip

In the following we formulate the qualitative assumptions on the operator $\cL$.
For this,
we decompose $\cA=\cA_0+i\cA_1$, where $\cA_0,\cA_1 \colon \Omega\to\R^{N\times N}$.
For $j=0,1$ we denote by $\cA_j^\s$ and $\cA_j^\a$ the symmetric and
anti-symmetric parts of $\cA_j$, respectively, i.e.,
\[
  \cA_j^\s=\tfrac12(\cA_j^{}+\cA_j^\top) \quad \textrm{and} \quad
  \cA_j^\a=\tfrac12(\cA_j^{}-\cA_j^\top).
\]
For the matrix $\cA_0^\s$ we assume that

\begin{Rmlist}
\item \label{assump1} $\cA_0^\s \in L^1_\loc$,
    $\cA_0^\s(x)$ is positive definite for a.e.\ $x\in\Omega$,
    and $(\cA_0^\s)^{-1}\in L^1_\loc$\,.
\end{Rmlist}

We define the symmetric form $\amax$ in $L^2$ by
\begin{equation}\label{D-amax-def}
  D(\amax) := \set{ u\in W^{1,1}_\loc \cap L^2 }
                  { \< \cA_0^\s\nabla u, \nabla u\> \in L^1 }
  \smallbds
\end{equation}
and
\begin{equation}\label{amax-def}
  \amax(u,v) := \dual{\cA_0^\s\nabla u}{\nabla v}
              = \int_\Omega \<\cA_0^\s \nabla u, \nabla v\>.
  \smallbds
\end{equation}
It follows from Assumption~\ref{assump1} that the form
$\amax$ is a Dirichlet form in $L^2$. (For the closedness
cf.\ \cite[Theorem~3.2]{RW}.
Moreover, $\amax$ is densely defined since $\cA_0^\s \in L^1_\loc$ implies that $C_\c^1 \subset D(\amax)$.)

We decompose the potential $Q$ as $Q = V\mkern-2mu + i W$, where $V$ and
$W$ are real-valued, and we define $h_{\max} := \amax + V^+$, i.e.,
\[
  h_{\max}(u,v)
  = \dual{\cA_0^\s \nabla u}{ \nabla v} + V^+(u,v)
\]
for all $u,v \in D(h_{\max}) = D(\amax) \cap \Q(V^+)$.
Then $h_{\max}$ is a closed positive symmetric form in $L^2$
as a sum of two such forms,
and $h_{\max}$ satisfies the Beurling--Deny criteria.
Note that $D(h_{\max})$ is dense in $L^2$ if $Q \in L^1_\loc$.
We fix a restriction $h_0$ of $h_{\max}$ and assume that
\begin{Rmlist}
\item \label{assump2} 
$h_0$ is a Dirichlet form in $L^2$.
\end{Rmlist}
This means in particular that $h_0$ is densely defined.
The form $h_0$ will be the main reference object in the perturbation
approach we develop in this paper.
An alternative reference form was used in \cite{sv02}; see Remark~\ref{reference-form}.

The next two assumptions on the coefficients of $\cL$ are needed to define the form $\form$ on $D(h_0)$:
\begin{Rmlist}
\item \label{assump3} $|\< \cA \xi, \eta \>|^2
  \le c^2 \+ \< \cA_0^\s \xi,\xi\> \+ \< \cA_0^\s \eta,\eta\>$
  for all $\xi,\eta\in\C^N$, for some $c>0$,
\item \label{assump4} $ \<(\cA_0^\s)^{-1} b_1, b_1\> +
    \<(\cA_0^\s)^{-1} b_2, b_2\> + |Q| \le c(h_0+1)$ for some $c>0$.
\end{Rmlist}
Throughout this paper we assume that Assumptions \ref{assump1}~--~\ref{assump4}
are satisfied.
\pagebreak[2]

It follows from~\ref{assump1} and~\ref{assump3} that $\cA\in L^1_\loc$.
By~\ref{assump3}, $(u,v)\mapsto \dual{\cA\nabla u}{\nabla v}$
is a bounded sesquilinear form on $D(h_0)$.
By~\ref{assump4} and the Cauchy--Schwarz inequality, the forms
$(u,v)\mapsto \dual{b_1 \cdot \nabla u}{v}$, $(u,v)\mapsto \dual{b_2\+u}{\nabla v}$
and $Q$ are bounded sesquilinear forms on $D(h_0)$.
Thus we can define a bounded sesquilinear form $\form$ on $D(h_0)$ by
\begin{equation}\label{form-def}
\form(u,v)
= \dual {\cA \nabla u}{\nabla v}
   + \dual{b_1 \cdot \nabla u}{v}
   - \dual{b_2\+u}{\nabla v}
   + \dual{Q\+u}{v}.
\end{equation}

The domain $D(h_0)$ of $\form$ will determine a particular realization of $-\gen$
as a generator of a $C_0$-semigroup. This is analogous to the classical theory
of elliptic and parabolic differential equations in divergence form with smooth
coefficients, where (homogeneous) boundary conditions are used.
For example, the form $h_0=\overline{h_{\max}\restrict{C_\c^\infty(\Omega)}}$
defines (homogeneous) Dirichlet boundary conditions on $\partial\Omega$,
and, if $\Omega$ has Lipschitz boundary, then taking
$h_0=h_{\max}$ leads to the generalized Neumann boundary condition
$\inner{\cA \nabla u - b_2u}{n}=0$ on $\partial \Omega$,
where $n$ is the outer normal to~$\partial \Omega$.

Our first main result, Theorem~\ref{mainresult} below,
involves several explicit parameters that are defined in a somewhat technical fashion;
we will need the constants from the next lemma.

\begin{lemma} \label{lcomplex1.05}
Adopt the above assumptions and notation.
\begin{alplist}[i]
\item  \label{lcomplex1.05(a)}
There exists an $\alpha_\s \ge 0$ such that
\begin{equation}\label{alpha-s}
  |\<\cA_1^\s \xi,\eta\>|^2
  \le \alpha_\s^2 \+ \<\cA_0^\s \xi,\xi\> \+ \<\cA_0^\s \eta,\eta\>
\end{equation}
for all $\xi,\eta\in \R^N$.
\item  \label{lcomplex1.05(b)}
There exist $\beta',B' \ge 0$ such that
\begin{equation}\label{beta-'}
\begin{split}
\MoveEqLeft[4]
\IIm \dual{\cA_1^\a \nabla u -  u\IIm(\bplusb)}{\nabla u} \\
 &\le \bigl(\beta'^2h_0(|u|)+B'\|u\|_2^2\bigr)^{\frac12} \dual{\cA_0^\s \eta(u)}{\eta(u)}^{\frac12}
\end{split}
\end{equation}
for all $u \in D(h_0)$, where
$\eta(u) := \IIm(\overline{\sgn u}\,\nabla u)$.
\item  \label{lcomplex1.05(c)}
There exist $\beta_1,\beta_2,B_1,B_2 \ge 0$ such that
\begin{equation}\label{beta-1-2}
 (-1)^j \dual{(\RRe b_j)u}{\nabla u} \le \beta_j h_0(u) + B_j\|u\|_2^2
\end{equation}
for all $0 \le u \in D(h_0)$ and $j=1,2$.
\item  \label{lcomplex1.05(d)}
There exist $\gamma,\Gamma \ge 0$ such that
\begin{equation}\label{gamma}
V^- = (\RRe Q)^- \le \gamma \+ h_0+\Gamma.
\end{equation}
\end{alplist}
\end{lemma}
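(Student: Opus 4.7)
The plan is to derive each of the four inequalities from Assumption \ref{assump3} (which gives a Cauchy--Schwarz-type bound on $\cA$ with respect to $\cA_0^\s$) together with Assumption \ref{assump4} (which bounds all the singular data by $h_0+1$). For parts \ref{lcomplex1.05(b)}--\ref{lcomplex1.05(d)} the common mechanism is a single weighted Cauchy--Schwarz: $\cA_0^\s$ splits each pairing into an $\int\<\cA_0^\s\cdot,\cdot\>$ factor, bounded by $\amax\le h_0$, and an $\int|u|^2\<(\cA_0^\s)^{-1}\cdot,\cdot\>$ factor, bounded via Assumption \ref{assump4}.

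For (\ref{alpha-s}), I would specialise Assumption \ref{assump3} to real $\xi,\eta\in\R^N$; then $\RRe\<\cA\xi,\eta\>=\cA_0\xi\cdot\eta$ and $\IIm\<\cA\xi,\eta\>=\cA_1\xi\cdot\eta$, so $|\cA_1\xi\cdot\eta|\le c\<\cA_0^\s\xi,\xi\>^{1/2}\<\cA_0^\s\eta,\eta\>^{1/2}$. Splitting $\cA_1^\s=\tfrac12(\cA_1+\cA_1^\top)$ and applying this bound after swapping $\xi$ and $\eta$ in the second summand delivers (\ref{alpha-s}) with $\alpha_\s=c$. Part \ref{lcomplex1.05(d)} follows at once: $V^-=(\RRe Q)^-\le|Q|\le c(h_0+1)$ by Assumption \ref{assump4}, so $\gamma=\Gamma=c$.

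The main work is part \ref{lcomplex1.05(b)}. I would use the phase decomposition $\nabla u=(\sgn u)(\nabla|u|+i\eta(u))$, valid a.e.\ on $\Omega$ with the convention $\sgn 0=0$ and the standard fact that $\nabla u=0$ a.e.\ on $\{u=0\}$; this also shows that $\eta(u)$ is real. Expanding $\<\cA_1^\a\nabla u,\nabla u\>$ pointwise and using antisymmetry of $\cA_1^\a$ to annihilate the two diagonal contributions gives $\IIm\<\cA_1^\a\nabla u,\nabla u\>=-2\,\cA_1^\a\nabla|u|\cdot\eta(u)$. Likewise, $u\,\overline{\nabla u}=|u|\nabla|u|-i|u|\eta(u)$ a.e., hence $\IIm\dual{u\IIm(\bplusb)}{\nabla u}=-\dual{|u|\IIm(\bplusb)}{\eta(u)}$. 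The $\cA_1^\a$-analogue of \ref{lcomplex1.05(a)} plus Cauchy--Schwarz on the integral bounds the first contribution by $2c\,\amax(|u|)^{1/2}\dual{\cA_0^\s\eta(u)}{\eta(u)}^{1/2}$; the weighted Cauchy--Schwarz combined with Assumption \ref{assump4} (using $\<(\cA_0^\s)^{-1}\IIm b_j,\IIm b_j\>\le\<(\cA_0^\s)^{-1}b_j,b_j\>$ and $(a+b)^2\le 2(a^2+b^2)$, applied to $|u|$) controls the second by $(2c(h_0(|u|)+\|u\|_2^2))^{1/2}\dual{\cA_0^\s\eta(u)}{\eta(u)}^{1/2}$. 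Collapsing the sum via $A+B\le\sqrt{2(A^2+B^2)}$ yields (\ref{beta-'}) with, e.g., $\beta'^2=8c^2+4c$ and $B'=4c$.

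For part \ref{lcomplex1.05(c)}, $0\le u\in D(h_0)$ makes both $u$ and $\nabla u$ real, so the same weighted Cauchy--Schwarz gives $|\dual{(\RRe b_j)u}{\nabla u}|\le(c(h_0(u)+\|u\|_2^2))^{1/2}h_0(u)^{1/2}$, and AM--GM $\sqrt{AB}\le\tfrac12(A+B)$ converts this into the desired $\beta_j h_0(u)+B_j\|u\|_2^2$. The only delicate bookkeeping anywhere in the argument is the reduction $\amax(|u|)\le h_0(|u|)$ used in part \ref{lcomplex1.05(b)}, which follows because $h_0$ is a restriction of $h_{\max}=\amax+V^+$ and $|u|\in D(h_0)$ by the Dirichlet form property.
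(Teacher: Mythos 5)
Your proposal is correct and follows essentially the same route as the paper: the same phase-decomposition identity $\IIm\dual{\cA_1^\a\nabla u - u\IIm(\bplusb)}{\nabla u}=\dual{-2\cA_1^\a\nabla|u|+|u|\IIm(\bplusb)}{\eta(u)}$ for part (b), followed by weighted Cauchy--Schwarz against $\cA_0^\s$ and Assumptions \ref{assump3}--\ref{assump4}, with (c) and (d) read off from \ref{assump4}. The only (immaterial) variation is in part~(a), where the paper first obtains the diagonal bound $|\<\cA_1^\s\xi,\xi\>|\le\alpha_\s\<\cA_0^\s\xi,\xi\>$ and then polarizes using the symmetry of $\cA_1^\s$, while you symmetrize the off-diagonal bound from \ref{assump3} directly; both give a valid $\alpha_\s$.
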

\begin{proof}
\ref{lcomplex1.05(a)}
By~\ref{assump3} there exists an $\alpha_\s \ge 0$ such that
\[
  |\<\cA_1^\s \xi,\xi\>| = \mathopen|\IIm \<\cA \xi,\xi\>|
  \le \alpha_\s \+ \<\cA_0^\s \xi,\xi\>
  \avoidbreak
\]
for all $\xi\in\R^N$.
This implies~\eqref{alpha-s} since $\cA_1^\s$ is real-valued and symmetric.

\ref{lcomplex1.05(b)}
Using the anti-symmetry of $\cA_1^\a$, one easily computes
\begin{equation}\label{elcomplex1.05(b);1}
  \IIm \dual{\cA_1^\a \nabla u - u\IIm(\bplusb)}{\nabla u}
  = \dual{-2\cA_1^\a \nabla |u| + |u|\IIm(\bplusb)}{\eta(u)}
\end{equation}
for all $u \in D(h_0)$; then~\eqref{beta-'} follows from \ref{assump3} and \ref{assump4}.

\ref{lcomplex1.05(c)} and \ref{lcomplex1.05(d)} are easy consequences of~\ref{assump4}.
\end{proof}

The combination of coefficients in~\eqref{beta-'} is explained by the circumstance
that in essence, $\cA_1^\a$ is the coefficient of an imaginary drift
$i(\div\cA_1^\a)\cdot\nabla$; cf.\ Example~\ref{imag-coeffs}.
For further comments on the inequalities~\eqref{alpha-s}~--~\eqref{beta-1-2}
see Remark~\ref{const-rem2}.

In addition to the above constants we will use the following notation:
\begin{align}
\delta_p &:= \alpha_\s\left|1-\frac2p\right| + \frac{\beta'}{2}, \nonumber \\[0.8ex]
\eps_p &:= \frac{4}{pp'} - \frac2p\beta_1 - \frac2{p'}\beta_2 - \delta_p^2 - \gamma, \label{deo} \\[0.8ex]
\homega_p &:= \frac2pB_1 + \frac2{p'}B_2 + \Gamma + \Bhat_p \nonumber
\end{align}
for all $p \in [1,\infty]$, where
\begin{equation}\label{Bhat}
  \Bhat_p :=
  \begin{cases}
  \frac{B'}{4} + \frac{\alpha_\s B'}{2\beta'} \bigl|1-\rfrac2p\bigr| & \text{if}\ \beta'>0, \\[0.4ex]
  \frac{B'}{4} & \text{if}\ \beta'=0.
  \end{cases}
\end{equation}
(In Theorem~\ref{mainresult} we will require
$\alpha_\s B' = 0$ in the case $\beta'=0$.)
Then the set
\begin{equation}\label{I-def}
  I := \set{p\in[1,\infty)}{\eps_p\ge0}
\end{equation}
is a closed interval since $\rfrac1p \mapsto \eps_p$ is a
concave function as the minimum of two quadratic polynomials with
negative leading coefficients.

In order to state the first main result, we need to 
formulate in which sense a $C_0$-semigroup on $L^p$ is associated with the form $\form$.
We define the set $\U$ of potentials as
the set of all measurable functions $U\from\Omega\to [0,\infty)$
satisfying the following two properties:
\begin{itemize}[itemsep=0ex,topsep=0.5ex plus 0.2ex minus 0.1ex,parsep=0.4ex]
\item
there exists a $C > 0$ such that $\mkU\le C(h_0+1)$,
and,
\item
$\form + U$ is a closable sectorial form.
\end{itemize}
Note that $\U$ depends on both $\form$ and $h_0$.
If $\mkU \in \U$ then we denote by $\gen_U$ the $m$-sectorial operator in $L^2$
associated with the closure of $\form+U$ and by
$S_U$ the $C_0$-semigroup on $L^2$ generated by $-\gen_U$.
We shall show that both the conditions of 
Theorem~\ref{mainresult} and of Theorem~\ref{mainresult2}
imply that $\U \neq \varnothing$.

\begin{definition}\label{asso}
\dnepbbl
Let $p \in [1,\infty)$.
We say that a $C_0$-semigroup $S_p$ on $L^p$ is associated with
the couple $(\form,\U)$ if the following two properties hold:
\begin{itemize}[itemsep=0ex,topsep=0.5ex plus 0.2ex minus 0.1ex,parsep=0.4ex]
\item
$\U\ne\varnothing$, and for each $\mkU\in\U$
the semigroup $S_U$ on $L^2$ extrapolates to a $C_0$-semigroup $S_\Up$ on $L^p$;
\item
if $U,U_1,U_2,\ldots\in\U$ satisfy $U_m\le U$ for all $m \in \N$ and
$U_m\to 0$ a.e., then $S_{U_m\shift,p} \to S_p$ \emph{in the semigroup sense},
i.e., for all $f\in L^p$ and $T>0$ one has
\[
  S_{U_m\shift,p}(t)f \to S_p(t)f
  \mbox{ in }L^p \mbox{ as }m\to\infty, \mbox{ uniformly for }t\in[0,T].
\]
\end{itemize}
\end{definition}

\begin{remark}\label{rem1}
\begin{alplist}
\item \label{rem1(b)} 
If $\mkU\in\U$ then $(\mkU-c)^+\in\U$ for all $c>0$,
and in Definition~\ref{asso} one can choose, e.g.,
$U_m = (\mkU - m)^+$ for all $m \in \N$.

\item \label{rem1(c)} 
Assume that the form $\form$ is sectorial and closable, and let $S$ be
the associated analytic semigroup on $L^2$. 
Let $p \in [1,\infty)$ and assume that there exists a $C_0$-semigroup $S_p$
on~$L^p$ associated with the couple $(\form,\U)$, in the sense 
of Definition~\ref{asso}.
Then $S_p$ is consistent with $S$ (choose $U = U_m = 0$ in the definition).

\item \label{rem1(d)} 
The approximation method of Definition~\ref{asso} has already
been used in \cite{sv02} and \cite{LM} for constructing semigroups associated
with second-order differential operators. In those papers, the authors
did not need to require form-boundedness of the potentials since they
could work with positive and dominated semigroups, respectively.
\end{alplist}
\end{remark}

The next theorem, which is the first main result of this paper,
shows that $\cL$ corresponds to a quasi-contractive semigroup on $L^p$
for all $p \in I$ (see~\eqref{I-def}).

\begin{theorem}\label{mainresult}
Let Assumptions \ref{assump1}~--~\ref{assump4} be satisfied,
and assume that $\inter{I} \ne \varnothing$.
Suppose that $\beta' > 0$ or $\alpha_\s B' = 0$.
Then $\U \neq \varnothing$ and 
there exists a consistent family of $C_0$-semigroups $S_p = (e^{-t\gen_p})_{t\ge0}$
on $L^p$, $p \in I$, with the following properties.

If $p\in \inter{I}$ then $S_p$ is an analytic semigroup associated with
$(\form, \U)$ in the sense of Definition~\ref{asso}.
Moreover,
for all $u\in D(\gen_p)$ one has $v_p := u|u|^{\frac p2-1}\in D(h_0)$ and
\[
  \RRe\dual{(\homega_p + \gen_p)u}{u|u|^{p-2}}
  \ge \eps_p h_0(|v_p|),
\]
and if $\eps \in (0,1)$ satisfies
$\eps + \frac{\eps}{1-\eps}\delta_p^2 \le \eps_p$, then
\[
  \RRe\dual{(\homega_p + \tfrac{\eps}{1-\eps} \Bhat_p + \gen_p)u}{u|u|^{p-2}}
  \ge \eps h_0(v_p).
\]
Finally,
$
  \|S_p(t)\|_{p\to p} \le e^{\homega_p t}
$
for all $p \in I$ and $t \ge 0$.
\end{theorem}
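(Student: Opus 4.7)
My plan is to follow the scheme outlined in the introduction: add an auxiliary non-negative potential $U$ to turn $\form+U$ into a sectorial closable form, so Kato's representation theorem yields an analytic semigroup $S_U$ on $L^2$; extrapolate $S_U$ to a quasi-contractive $C_0$-semigroup on $L^p$; and finally remove $U$ by the approximation of Definition~\ref{asso}. First I would verify $\U\ne\varnothing$: by Lemma~\ref{lcomplex1.05}\ref{lcomplex1.05(d)} the negative part $V^-$ of $\RRe Q$ is $h_0$-bounded, and Assumption~\ref{assump4} combined with parts~\ref{lcomplex1.05(a)},\ref{lcomplex1.05(b)} of the lemma makes the indefinite imaginary contributions from $\cA_1^\s,\cA_1^\a,\IIm(b_1+b_2),W$ also $h_0$-bounded. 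A sufficiently large bounded truncation of these dominants is then a potential $U\le C(h_0+1)$ that makes $\form+U$ closable sectorial, so $\U\ne\varnothing$.

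The heart of the proof is a pointwise $L^p$-accretivity estimate. Fix $U\in\U$ and $u$ in a core of $\gen_U$ (after a standard regularisation of $|u|$ near its zero set). Starting from the identity
\[
  \nabla(u|u|^{p-2})=(\sgn u)\,|u|^{p-2}\bigl[(p-1)\,\RRe(\overline{\sgn u}\,\nabla u)+i\,\eta(u)\bigr]
\]
and the analogous formula for $\nabla v_p$, I would expand $(\form+U)(u,u|u|^{p-2})$ and decompose $\cA=\cA_0^\s+\cA_0^\a+i(\cA_1^\s+\cA_1^\a)$. The principal symmetric piece yields $\tfrac{4}{pp'}h_0(|v_p|)$; the symmetric imaginary part $\cA_1^\s$ contributes the $\alpha_\s|1-2/p|$-term via Lemma~\ref{lcomplex1.05}\ref{lcomplex1.05(a)}; Lemma~\ref{lcomplex1.05}\ref{lcomplex1.05(b)} absorbs $\cA_1^\a$ together with $\IIm(b_1+b_2)$; Lemma~\ref{lcomplex1.05}\ref{lcomplex1.05(c)},\ref{lcomplex1.05(d)} handle the real drifts and $V^-$; and the $U$-contribution is non-negative and discarded. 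A weighted Young inequality collects the indefinite cross-terms into $\delta_p^2\,h_0(|v_p|)+\Bhat_p\,\|u\|_p^p$, leaving precisely $\eps_p h_0(|v_p|)$ on the good side and $\homega_p\|u\|_p^p$ on the bad side, in accordance with~\eqref{deo}--\eqref{Bhat}. The refined estimate with $\eps\in(0,1)$ follows from the same computation with an $\eps$-weighted Young split.

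With the accretivity estimate in hand, a Lumer--Phillips/cutoff argument shows that $S_U$ extrapolates to a quasi-contractive $C_0$-semigroup $S_{\Up}$ on $L^p$ with $\|S_{\Up}(t)\|_{p\to p}\le e^{\homega_p t}$. For the limit step, take $U_m\le U_0\in\U$ with $U_m\downarrow 0$ a.e.\ (say $U_m=(U_0-m)^+$ from Remark~\ref{rem1}\ref{rem1(b)}). Monotone convergence of the sectorial forms $\form+U_m$ gives strong convergence of the resolvents $(\gen_{U_m}+\lambda)^{-1}$ in $L^2$; combined with the uniform $L^p$-bound $e^{\homega_p t}$ on $S_{U_m,p}$, a Trotter--Kato argument in $L^p$ produces a limit semigroup $S_p$ independent of the approximating sequence. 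Analyticity for $p\in\inter{I}$ follows from the sharper estimate with $0<\eps<\eps_p$, which confines the $L^p$-numerical range of $\gen_p$ to a strict sector, and consistency of $(S_p)_{p\in I}$ is inherited from the consistency of each $S_{\Up}$ with $S_U$ and is preserved under the approximation.

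The main obstacle I expect is executing the computation of the second paragraph cleanly, verifying that the indefinite cross-terms assemble into precisely~\eqref{deo}--\eqref{Bhat}. In particular, the dichotomy in~\eqref{Bhat} between $\beta'>0$ and $\beta'=0$ (the latter case requiring $\alpha_\s B'=0$) reflects which Young splits remain admissible when the $\cA_1^\s$- and $\cA_1^\a$-contributions must be absorbed simultaneously: if $\beta'=0$ the formal coefficient $\tfrac{\alpha_\s B'}{2\beta'}|1-2/p|$ in~\eqref{Bhat} would be singular, and the hypothesis $\alpha_\s B'=0$ is exactly what removes this singularity.
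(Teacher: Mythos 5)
Your second paragraph is essentially the paper's route: the expansion of $\form(u,u|u|^{p-2})$ in terms of $\xi=\nabla|v_p|$ and $\eta=\IIm(\overline{\sgn v_p}\,\nabla v_p)$ is Lemma~\ref{accr} (done there with the truncations $v=u(|u|^{\frac p2-1}\wedge r)$, $w=u(|u|^{p-2}\wedge r^2)$ to keep $w$ in the form domain), and your ``$\eps$-weighted Young split'' assembling $\delta_p^2$, $\Bhat_p$, $\eps_p$, $\homega_p$ is exactly Lemma~\ref{calculus}; the extrapolation via Lumer--Phillips is Lemma~\ref{extrapol}/Proposition~\ref{appr}. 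Two smaller points: adding a \emph{bounded} truncation of the dominants cannot produce an element of $\U$, since a bounded positive perturbation does not change sectoriality; the paper uses the genuinely unbounded potential $\Uhat$ of~\eqref{Uhat} and the inequality~\eqref{t+Uhat}. And for analyticity a lower bound on the real part is not enough; you also need the companion upper bound $|\IIm\form(u,w)|\le c_0(h_0+1)(v)$ of~\eqref{im-est}.

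The genuine gap is the removal of $U$. Your mechanism --- monotone form convergence giving strong resolvent convergence in $L^2$, then Trotter--Kato in $L^p$ using the uniform bound $e^{\homega_p t}$ --- fails for two reasons. First, the limit form of $\form+U_m$ as $U_m\downarrow0$ is $\form$ itself, which under the hypotheses of the theorem need \emph{not} be sectorial or closable, and indeed $2$ need not belong to $I$ (e.g.\ $\eps_2=1-\beta_1-\beta_2-(\beta'/2)^2-\gamma$ can be negative); so there is in general no limiting $L^2$-resolvent to start from. Second, even when $2\in\inter I$, strong convergence at $p=2$ plus uniform boundedness at a single other exponent does not interpolate to strong convergence, and Trotter--Kato--Neveu requires convergence of $\lambda(\lambda+\gen_{U_m,p})^{-1}f$ \emph{uniformly} in $\lambda$, which uniform bounds alone do not give. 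The paper instead proves a Cauchy property of the resolvents directly in $L^p$: from the accretivity estimate it derives the weighted resolvent bounds $\|U^{1/p}(\lambda+\gen_{\Up})^{-1}\|_{p\to p}\le c^{1/p}(\lambda-\omega)^{-1/p'}$ (Lemma~\ref{Up-est}, Proposition~\ref{strong-bound}), feeds the difference $U_k-U_m$ through the second resolvent identity and H\"older between two exponents $p<q$ in $\inter J$, and --- because $\U\not\subset L^1$ in general --- twists the resolvents by weights $\rho^{\pm\eps}$ with $\rho\in D(h_0)$ so that $U\rho^2\in L^1$ (Lemma~\ref{form-weighted}, Proposition~\ref{weight-Um}, Lemma~\ref{lcompl230}). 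This entire apparatus, which is the technical core of the generation theorem, is absent from your proposal and cannot be replaced by the $L^2$-plus-interpolation argument you sketch.
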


\begin{remark}\label{mainresult-rem}
\begin{alplist}
\item \label{mainresult-rem(c)} 
The case where $\beta' = 0$ and $\alpha_\s B' > 0$ is not covered in the above result.
In this case one does not necessarily obtain a quasi-contractive $C_0$-semigroup on $L^p$
associated with $(\form,\U)$ if $p \in \partial I$;
see Example~\ref{open-ex}.

\item \label{mainresult-rem(e)} 
Assume that $b_1=b_2=0$, $Q=0$ and $\cA_1^\a=0$. Then $\homega_p=\Bhat_p=0$, so that
Theorem~\ref{mainresult} yields a contractive $C_0$-semigroup $S_p$ on $L^p$
if $\eps_p = \frac{4}{pp'} - \alpha_\s^2(1-\rfrac2p)^2 \ge 0$, i.e., if
\[
  \alpha_\s{|p-2|} \le 2\sqrt{p-1}\,.
\]
In \cite[Theorem~1]{CM2006}, this condition is shown to be a necessary condition for the
formal $L^p$-dissipativity of $-\cL$,
i.e., for the property that for all $u \in C_\c^1(\Omega)$ one has
\vspace{-1.5ex}
\begin{align*}
\RRe \dual{\cA \nabla u}{\nabla(u|u|^{p-2})} &\ge 0 \quad \text{if}\ p\ge2, \\[0.5ex]
\RRe \dual{\cA \nabla(u|u|^{p'-2})}{\nabla u} &\ge 0 \quad \text{if}\ p<2.
\end{align*}

\item \label{mainresult-rem(f)} 
Note that $\infty\notin I$ by definition. Nevertheless,
Theorem~\ref{mainresult} can be used to derive $L^\infty$-properties of the semigroup:
suppose that $\alpha_\s = \beta' = \beta_2 = \gamma = 0$ and $\beta_1 < 2$.
Then $\eps_\infty = 0$ and $\inter{I} \ne \varnothing$.
It follows from Theorem~\ref{mainresult} that one has the bound
$\|S_p(t)\|_{p \to p} \le \exp\bigl((\frac{2}{p}B_1+\frac{2}{p'} B_2 + \Gamma +\frac{B'}{4}) t\bigr)$
for all $t \ge 0$ and large $p$.
Hence there exists a quasi-contractive weak$^*$-continuous semigroup $S_\infty$
of weak$^*$-continuous operators on $L^\infty$ that
is consistent with $S_p$ for all $p \in I$, and
$\|S_\infty(t)\|_{\infty \to \infty} \le \exp\bigl((2 B_2 + \Gamma + \frac{B'}{4}) t\bigr)$
for all $t \ge 0$.
\end{alplist}
\end{remark}

Theorem~\ref{mainresult} will be derived from Theorem~\ref{mainresult2} below.
There we replace the assumption $\inter{I} \ne \varnothing$
with a more general assumption which involves the following notion.
Let $p\in[1,\infty]$, and let $\tau_p$ be the
functional on $D(h_0)$ defined by
\begin{align*}
\tau_p(v) :=
{} & {\RRe\dual{\cA\nabla v}{\nabla v}}
     - (1-\rfrac2p)^2 \dual{\cA_0^\s \nabla |v|}{\nabla |v|} \\
   & - 2|1-\rfrac2p| \int_\Omega \left|
       \inner{\cA_1^\s\nabla |v|}{\IIm (\overline{\sgn v}\,\nabla v)}\right| \\
   & + 2 \dual{|v|\RRe(\tfrac1p b_1- \tfrac1{p'} b_2) }{\nabla |v|}
     - \dual{|v|\IIm(\bplusb)}{\IIm (\overline{\sgn v}\,\nabla v)} + V(v).
\end{align*}
If in the third term of the above definition one removes the absolute value signs
except on $|v|$, then
the corresponding functional coincides with the one introduced in \cite{CM2006}, where
Lemma~1 states that its positivity on $C_\c^1$ is a necessary condition for
formal dissipativity of $-\cL$ in $L^p$.
In Theorem~\ref{mainresult2} we show that a suitable coercivity assumption on $\tau_p$ is a sufficient condition
for the existence of a quasi-contractive $C_0$-semigroup on $L^p$ corresponding to $\cL$;
this sufficient condition is close to the necessary condition in \cite{CM2006}.

Since $\nabla |v| = \RRe (\overline{\sgn v}\,\nabla v)$, it is straightforward to verify that
\begin{equation}\label{tau_p}
\begin{split}
\tau_p(v) =
{} & {\RRe\form(v)}
     - (1-\rfrac2p)^2 \amax(|v|)
     - 2|1-\rfrac2p| \int_\Omega \left|
       \inner{\cA_1^\s\nabla |v|}{\IIm (\overline{\sgn v}\,\nabla v)}\right| \\
   & - (1-\rfrac2p)\dual{|v|\RRe(\bplusb)}{\nabla |v|}
\end{split}
\end{equation}
for all $v\in D(h_0)$, with $\amax$ as defined in~\eqref{amax-def}.

Given $p \in [1,\infty]$, we define $\tomega_p \in (-\infty,\infty]$ by
\begin{equation}\label{omegap-tilde-def}
  \tomega_p := \inf \set{\omega\in\R}
               {\tau_p(v) \ge -\omega\|v\|_2^2 \mbox{ for all } v \in D(h_0)},
\end{equation}
and we set
\begin{equation}\label{J-def}
  J := \set{p\in[1,\infty)}{\tomega_p < \infty} .
\end{equation}
Observe that $\rfrac1p \mapsto \tau_p(v)$ is concave on $[0,1]$ for all $v\in D(h_0)$.
Hence the function $\rfrac1p \mapsto \tomega_p \in (-\infty,\infty]$ is convex on $[0,1]$
and $J$ is an interval.
In the proof of Theorem~\ref{mainresult} we will show that $J\supseteq I$ (cf.~\eqref{I-def}).

\begin{remark}\label{rcomplex1}
If $p\ne2$ then the functional $\tau_p$ is not a quadratic form, i.e.,
the parallelogram law $\tau_p(u+v) + \tau_p(u-v) = 2\tau_p(u) + 2\tau_p(v)$
does not hold. 
One can show this by choosing $u=w^{1+im}$ and $v=\overline u=w^{1-im}$,
where $m>0$ is sufficiently large and $w\in D(h_0)$ is fixed,
with $0\le w\le1$ a.e.\ and $\nabla w \ne 0$.
Note that $u,v$ are multiples of normal contractions of $w$, hence $u,v \in D(h_0)$
as $h_0$ is a Dirichlet form by Assumption~\ref{assump2}.
The terms $\dual{\cA_0^\s \nabla |u\pm v|}{\nabla |u\pm v|}$ from the definition of $\tau_p$
contribute a quadratic term in $m$ on the left hand side of the parallelogram law, 
whilst all the other contributions on either side are sublinear in~$m$.
So even for real-valued coefficients, $\tau_p$ is not quadratic unless $p=2$.

We recall that the case of real-valued coefficients has already been studied in \cite{sv02}.
In that paper, a different definition of $\tau_p$ is used, making it a quadratic form.
It coincides with our definition of $\tau_p$ only for real-valued functions in the form domain.
\end{remark}

For the next two results we will assume in addition to \ref{assump1}~--~\ref{assump4} that
\begin{Rmlist}
\item \label{assump8} there exist $p_0\in J$, $\mu>0$ and $\omega\in\R$ such that
\[
  \tau_{p_0}(v) \ge \mu h_0(v) - \omega\|v\|_2^2
  \smallbds
\]
for all $v \in D(h_0)$.
\end{Rmlist}

The next theorem is our main result on existence of a quasi-contractive
semigroup on $L^p$ corresponding to $\cL$ for all $p\in J$ (see~\eqref{J-def}).

\begin{theorem}\label{mainresult2}
Let Assumptions \ref{assump1}~--~\ref{assump8} be satisfied.
Then $\sinter{J}\ne\varnothing$, $\U \neq \varnothing$
and there exists a consistent family of $C_0$-semigroups
$S_p=(e^{-t\gen_p})_{t\ge0}$ on $L^p$, $p\in J$, such that
\begin{equation}\label{q-c-main2}
\|S_p(t)\|_{p\to p}\le e^{\tomega_pt}
\end{equation}
for all $t \ge 0$ and, for all $p\in\inter J$, the semigroup $S_p$ is analytic and
associated with $(\form, \U)$ in the sense of Definition~\ref{asso}.
\end{theorem}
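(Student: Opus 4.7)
The plan is to follow the strategy of \cite{sv02}: first add an auxiliary potential $U\in\U$ to make $\form+U$ sectorial and invoke Kato's representation theorem to obtain an analytic $C_0$-semigroup $S_U$ on $L^2$; then prove a dissipativity estimate for the corresponding generator $\gen_U$ on $L^p$ that is uniform in $U\in\U$; and finally define $S_p$ as the strong limit of $S_{U_m,p}$ along a sequence $U_m\downarrow 0$.

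First I would establish that $\U\ne\varnothing$ and $\sinter J\ne\varnothing$. Combining Assumption~\ref{assump8} with the explicit form~\eqref{tau_p} and the observation that, by~\ref{assump3}--\ref{assump4} and Cauchy--Schwarz, every correction term in $\tau_{p_0}$ is bounded by $C(h_0+\|\cdot\|_2^2)$, one obtains the a priori estimates
\[
  \RRe\form(v)\ge -C\bigl(h_0(v)+\|v\|_2^2\bigr), \qquad
  |\IIm\form(v)|\le C\bigl(h_0(v)+\|v\|_2^2\bigr).
\]
From these bounds one constructs a measurable $U\ge 0$ with $U\le C(h_0+1)$ for which $\form+U$ is closable and sectorial; hence $\U\ne\varnothing$. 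Moreover, continuity of $1/p\mapsto\tau_p(v)$ shows that the inequality in \ref{assump8} persists, with a slightly smaller $\mu$, on a neighbourhood of $p_0$ in $1/p$, and therefore $p_0\in\sinter J$.

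The heart of the proof is the $L^p$-dissipativity identity for $\gen_U$. For $u\in D(\gen_U)\cap L^p$ I would first approximate $|u|$ by $(|u|^2+\eps)^{1/2}$ and, using the Dirichlet form property of $h_0$ together with \ref{assump3}--\ref{assump4}, show that $v:=u|u|^{p/2-1}\in D(h_0)$; then the complex chain rule together with the computation that motivates definition~\eqref{tau_p} yields
\[
  \RRe\dual{\gen_U u}{u|u|^{p-2}}=\tau_p(v)+\dual{U v}{v}.
\]
Since $U\ge 0$, the definition~\eqref{omegap-tilde-def} of $\tomega_p$ gives $\RRe\dual{\gen_U u}{u|u|^{p-2}}\ge -\tomega_p\|u\|_p^p$ for every $p\in J$. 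Combined with analyticity of $S_U$ on $L^2$ and density of $D(\gen_U)\cap L^p$ in $L^p$, a Lumer--Phillips-type argument shows that $S_U$ extrapolates to a quasi-contractive $C_0$-semigroup $S_{U,p}$ on $L^p$ with $\|S_{U,p}(t)\|_{p\to p}\le e^{\tomega_p t}$, uniformly in $U\in\U$.

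With this uniform bound, $S_p$ is obtained by approximation. Pick any $\tilde U\in\U$ and set $U_m:=(\tilde U-m)^+$; by Remark~\ref{rem1}\ref{rem1(b)} one has $U_m\in\U$ and $U_m\downarrow 0$ a.e. Monotone convergence of the sectorial forms $\form+U_m$ yields strong resolvent convergence in $L^2$, hence strong $L^2$-convergence of $S_{U_m}(t)$ on compact $t$-intervals; the uniform $L^p$-bound plus density then produces a strong $L^p$-limit $S_p(t):=\lim_m S_{U_m,p}(t)$ which is again a $C_0$-semigroup satisfying~\eqref{q-c-main2}. Any other admissible sequence from Definition~\ref{asso} is sandwiched between two such sequences, so by the uniform bound it converges to the same $S_p$, which proves associatedness with $(\form,\U)$. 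Consistency of the family $(S_p)_{p\in J}$ follows because all $S_p$ arise from the same $L^2$-semigroups $S_{U_m}$. Finally, for $p\in\sinter J$ the strict coercivity $\tau_p(v)\ge\mu' h_0(v)-\omega'\|v\|_2^2$, available at interior points by convexity of $1/p\mapsto\tomega_p$, provides room to bound $|\IIm\dual{\gen_U u}{u|u|^{p-2}}|$ by $M\RRe\dual{(\omega'+\gen_U)u}{u|u|^{p-2}}$, proving sectoriality of $\gen_p$ and hence analyticity of $S_p$ after passing to the limit in $U_m$.

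The main obstacle is the chain-rule identity for $\RRe\dual{\gen_U u}{u|u|^{p-2}}$: verifying that $v=u|u|^{p/2-1}\in D(h_0)$ requires a careful $\eps$-regularisation exploiting the Dirichlet form property of $h_0$, and carrying out the complex chain rule to reassemble the result into exactly the six summands defining $\tau_p$ in~\eqref{tau_p} is where the particular combination of $\cA_1^\s$, $\cA_1^\a$, $\IIm(b_1+b_2)$ and $\sgn u$ appearing in $\tau_p$ becomes essential. Because the coefficients are complex-valued, the simplifications of \cite{sv02} are lost, and the cross terms between $\nabla|u|$ and $\IIm(\overline{\sgn u}\,\nabla u)$ are only controlled by the specific structure that $\tau_p$ has been designed to capture.
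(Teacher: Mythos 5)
Your overall architecture (absorb singularities with $U\in\U$, extrapolate $S_U$ to $L^p$ with a $U$-uniform quasi-contractivity bound, then remove $U$ by approximation) is the paper's strategy, and the first two stages are essentially right, modulo two technical points you gloss over: the pairing $\dual{\gen_U u}{u|u|^{p-2}}$ does not make sense for $u\in D(\gen_U)\subset L^2$ without first truncating, i.e.\ testing against $w_r=u(|u|^{p-2}\wedge r^2)$ rather than regularising $|u|$ near zero (the paper's Lemma~\ref{accr} and Lemma~\ref{extrapol}); and your ``identity'' $\RRe\dual{\gen_U u}{u|u|^{p-2}}=\tau_p(v)+U(v)$ can only be an inequality $\ge$, because $\tau_p$ contains the term $-2|1-\rfrac2p|\int|\<\cA_1^\s\nabla|v|,\IIm(\overline{\sgn v}\,\nabla v)\>|$ with an absolute value inside the integral. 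Moreover, since $\tau_p$ is in general not lower semicontinuous when $\cA_1\ne0$, passing such estimates to resolvent limits requires replacing $\tau_p$ by its lower semicontinuous hull $\h_p$, which you do not address.

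The genuine gap is the final stage. You claim that monotone form convergence gives strong resolvent convergence of $\gen_{U_m}$ in $L^2$, and that ``the uniform $L^p$-bound plus density then produces a strong $L^p$-limit''. Both halves fail. First, the limit form $\form$ need not be sectorial at all --- indeed $2$ need not belong to $J$ --- so there is no monotone convergence theorem in $L^2$ to invoke; the decreasing sequence $\form+U_m$ has no sectorial limit in general. Second, even where $L^2$-convergence is available, uniform $p\to p$ boundedness together with convergence in the $L^2$-norm on $L^2\cap L^p$ does not yield convergence in the $L^p$-norm; interpolation only gives convergence in $L^q$ for $q$ strictly between $2$ and $p$, which never reaches $p$ itself and is useless when $2\notin J$. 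Establishing the $L^p$-convergence of $S_{U_m\shift,p}$ is precisely the hard core of the paper: one proves that the resolvents $(\lambda+\gen_{U_m\shift,p})^{-1}f$ are Cauchy in $L^p$ directly, via the second resolvent equation combined with the smoothing bounds $\|(U')^{1/p}(\lambda+\gen_\Up)^{-1}\|_{p\to p}\le c^{1/p}(\lambda-\omega)^{-1/p'}$ and their duals (Lemma~\ref{Up-est}, Proposition~\ref{strong-bound}), a H\"older estimate between two exponents $p<q$ in $\sinter J$, and --- since $\U\not\subset L^1$ --- resolvents twisted by a weight $\rho\in D(h_0)$ with $\mkU\rho^2\in L^1$ (Lemmas~\ref{form-weighted} and~\ref{lcompl230}, Proposition~\ref{weight-Um}), before applying the Trotter--Kato--Neveu theorem. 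None of this machinery, nor any substitute for it, appears in your proposal, so the limit semigroup $S_p$ is not actually constructed.
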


\begin{remark}\label{main2-rem}
\begin{alplist}
\item \label{main2-rem(a)} 
The above result extends the main result of \cite{sv02}
to the case of complex coefficients. We point out, however, that
the form-boundedness assumption~\ref{assump4} for the lower-order terms
is not needed in \cite{sv02}.
This generality seems not achievable
in the context of complex coefficients because of the absence of a perturbation theory
analogous to the perturbation theory for positive semigroups.

\item \label{mainresult-rem(d)} 
We shall prove in Proposition~\ref{closed} that the form $\form$ is sectorial and closed
if $2 \in \sinter{J}$.
It follows from Remark~\ref{rem1}\ref{rem1(c)} that then $\gen_2$
is the $m$-sectorial operator associated with $\form$.
(Recall that $-\gen_2$ is the generator of $S_2$.)

\item \label{main2-rem(b)} 
In addition to the assumptions of Theorem~\ref{mainresult2}
assume that $\tomega_\infty < \infty$.
Then the semigroup $S_p$ extrapolates to a weak$^*$-continuous
quasi-contractive semigroup $S_\infty$ on $L^\infty$.
Indeed, since $\rfrac1p \mapsto \tomega_p$ is convex,
the interval $J$~is unbounded and $\lim_{p\to\infty}\tomega_p \le \tomega_\infty$.
For all $f\in L^1\cap L^\infty$, $t > 0$ and large $p \in (1,\infty)$
one has $\|S_p(t)f\|_p \le e^{\tomega_pt} \rule{0pt}{2.2ex} \|f\|_p$,
so for $p\to\infty$ one obtains $\|S_p(t)\|_{\infty\to\infty} \le e^{\tomega_\infty t}$.
This generalizes the sufficiency part of \cite[Theorem~4.6]{Ou}
to operators with unbounded coefficients.
\end{alplist}
\end{remark}

In the case of uniformly elliptic and bounded $\cA$ we can extend the interval $J$
of existence of a semigroup associated with $\gen$.

\begin{theorem}\label{thm-extension}
Let Assumptions \ref{assump1}~--~\ref{assump8} be satisfied.
Let $p\in J$, and let $S_p$ be the $C_0$-semigroup on $L^p$ constructed in Theorem~\ref{mainresult2}.
Suppose in addition that $N\ge 3$, that
$\cA_0^s$ is uniformly elliptic and bounded, i.e.,
there exist constants $c_1,c_2\ge 1$ such that
\begin{equation}\label{uell}
  c_1|\xi|^2\le \<\cA_0^s\xi, \xi\> \le c_2|\xi|^2 \quad \text{for all}\ \xi\in\R^N,
\end{equation}
and that $D(h_0)$ is a subset of $L^{\frac{2N}{N-2}}(\Omega)$ and an ideal of $D(\amax)$;
cf.~\eqref{D-amax-def}.

Let $p_- := \inf J$, $p_+ := \sup J$, $p_{\max} := \frac{N}{N-2}p_+$ and
$p_{\min} := \bigl(\frac{N}{N-2}(p_-)'\bigr)'$.
Then $S_p$ extrapolates to an analytic $C_0$-semigroup on $L^q$
for all $q\in (p_{\min}, p_{\max})$, and the sector of analyticity and the spectrum of the generators are independent of $q$.
\end{theorem}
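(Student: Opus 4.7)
The plan is to derive a Sobolev-type coercivity for $\gen_p$ on $L^p$ for $p\in\inter J$, propagate it to the semigroup via the weighted estimate technique of~\cite{lsv02}, and then use duality and consistency to obtain the extension to the full interval $(p_{\min},p_{\max})$ together with the $q$-independence of the spectrum and of the sector of analyticity. Write $2^*:=\tfrac{2N}{N-2}$. Since $(D(h_0),(h_0+\|\cdot\|_2^2)^{1/2})$ is a Hilbert space continuously embedded in $L^{2^*}$ (by the closed graph theorem, using the hypothesis $D(h_0)\subset L^{2^*}$), there is a constant $C_S$ with $\|w\|_{2^*}^{2}\le C_S(h_0(w)+\|w\|_2^{2})$ for all $w\in D(h_0)$. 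Applied to the normal contraction $|v_p|$ of $v_p=u|u|^{p/2-1}\in D(h_0)$ for $u\in D(\gen_p)$ and $p\in\inter J$, together with $\|v_p\|_{2^*}^{2}=\|u\|_{pN/(N-2)}^{p}$, $\|v_p\|_2^{2}=\|u\|_p^{p}$, and the coercivity for $\gen_p$ in $L^p$ established in the course of proving Theorem~\ref{mainresult2} (which under Assumptions~\ref{assump1}~--~\ref{assump8} yields an estimate of the form $\RRe(\gen_p u,u|u|^{p-2})\ge\mu_p h_0(|v_p|)-\tomega_p\|u\|_p^{p}$), this produces
\[
\|u\|_{pN/(N-2)}^{p}\le C_p\bigl(\RRe(\gen_p u,u|u|^{p-2})+\omega_p\|u\|_p^{p}\bigr)
\]
for suitable $C_p,\omega_p>0$.

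Applying this to $u=(\lambda+\gen_p)^{-1}f$ together with $|\RRe(f,u|u|^{p-2})|\le\|f\|_p\|u\|_p^{p-1}$ and the $L^p$-resolvent bound $\|u\|_p\le(\lambda-\tomega_p)^{-1}\|f\|_p$ from Theorem~\ref{mainresult2} gives
\[
\bigl\|(\lambda+\gen_p)^{-1}\bigr\|_{p\to pN/(N-2)}\le C_p^{1/p}(\lambda-\omega_p)^{-(p-1)/p}
\]
for $\lambda$ large, which by interpolation with the $L^p\to L^p$ resolvent bound yields an $L^p\to L^q$ estimate for every $q\in[p,pN/(N-2)]$; varying $p$ over $\inter J$ then covers every $q\in(p_-,p_{\max})$. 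Applying the whole argument to the formal adjoint $\cL^*$ — whose coefficients satisfy the analogous hypotheses with $b_1,b_2$ swapped and $\cA,Q$ replaced by their pointwise adjoints — and dualizing handles $q\in(p_{\min},p_-)$. To upgrade these $L^p\to L^q$ resolvent bounds into the existence of an analytic $C_0$-semigroup $S_q$ on $L^q$ (bounded, consistent with $S_p$, and generated by the part of $\gen_p$ in $L^q$), I follow the weighted $L^p$-estimate method of~\cite{lsv02}: conjugation by exponentials $e^{\alpha\phi}$ for Lipschitz $\phi$ with $|\nabla\phi|\le 1$ perturbs the coercivity by terms of order $\alpha^{2}$ (under Assumptions~\ref{assump3}~--~\ref{assump4}) and produces the off-diagonal bounds that, by integration, give $L^{q}\to L^{q}$ boundedness of $S_p(t)$ on every $q$ in the extended range.

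Finally, the $q$-independence of the spectrum and of the sector of analyticity is deduced as follows. The resolvents $(\lambda+\gen_q)^{-1}$ for different $q\in(p_{\min},p_{\max})$ agree on the dense intersections $L^{q_1}\cap L^{q_2}$, so the whole resolvent family is consistent; combined with the $L^{q_1}\to L^{q_2}$ bounds from the Sobolev/weighted machinery, boundedness of $(\lambda+\gen_{q_0})^{-1}$ on any single $L^{q_0}$ forces boundedness of its consistent extension on every $L^{q}$ in the extended range, so $\sigma(\gen_q)$ is $q$-independent; the same argument with $\lambda$ in a complex sector, using a sector-uniform version of the weighted estimate, gives $q$-independence of the sector of analyticity. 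The main obstacle is keeping both the constants and the opening of this resolvent sector uniform in $q$ throughout the weighted computation; in the complex-coefficient setting one must absorb the extra contributions from $\cA_1^\s$, $\cA_1^\a$ and $\IIm b_1,\IIm b_2$ using Assumption~\ref{assump4} exactly as in the derivation of Lemma~\ref{lcomplex1.05}, without degrading the sector-uniformity produced by the Davies-type conjugation.
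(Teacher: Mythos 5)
Your proposal is correct and follows essentially the same route as the paper: the coercivity estimate from Theorem~\ref{pcomplex330} combined with the Sobolev embedding $D(h_0)\subset L^{2N/(N-2)}$ gives the $L^p\to L^{\frac{N}{N-2}p}$ smoothing (the paper phrases it as a semigroup bound $\|S_p(t)\|_{p\to\frac{N}{N-2}p}\le Mt^{-1/p}e^{\omega t}$ via analyticity rather than as a resolvent bound, an immaterial difference), the Davies-type exponential weights $\rho_\xi(x)=e^{-\langle\xi,x\rangle}$ handled through Proposition~\ref{weight-Um}\ref{weight-Um(b)} (with the ideal hypothesis entering via Lemma~\ref{bdd-multipl} to make the truncated weights bounded multiplication operators on $D(h_0)$) give the twisted bound with growth $e^{\mu|\xi|^2t}$, and \cite[Proposition~2.8]{lsv02} combines these and yields the $q$-independence of sector and spectrum, with duality covering $(p_{\min},p_+)$.
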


\begin{remark}\label{remcomplex1.11}
\begin{alplist}
\item \label{remcomplex1.11(a)} 
The assumption of the theorem that $D(h_0)$ is a subset of
\smash{$L^{\frac{2N}{N-2}}(\Omega)$} is in fact a Sobolev embedding theorem
which holds, for example, for Dirichlet boundary conditions or if the domain
$\Omega$ satisfies the cone property or the extension property; see \cite{Ad}.

\item \label{remcomplex1.11(c)} 
In \cite{lsv02}, instead of $D(h_0)$ being an ideal of $D(\amax)$, a slightly weaker
(but more technical) assumption was used, namely that all the bounded Lip\-schitz
functions on $\Omega$ are multiplication operators on $D(h_0)$.
In the case that the bounded Lipschitz functions in $D(h_0)$ form a
core for~$h_0$, the two conditions are in fact equivalent.
\end{alplist}
\end{remark}

The paper is organized as follows.
In Section~\ref{sec_approx} we deal with the first part of Definition~\ref{asso}:
we investigate extrapolation of the semigroup $S_U$ to the $L^p$-spaces.
Next in Section~\ref{Scomplex2} we prove our main result on generation of
quasi-contractive $C_0$-semigroups, Theorem~\ref{mainresult2},
from which we then derive Theorem~\ref{mainresult} in Section~\ref{S-mainresult}.
There we also discuss some examples.
Finally, in Section~\ref{extension} we prove Theorem~\ref{thm-extension}.

\section{Construction of approximating semigroups}\label{sec_approx}

In this section we study the $C_0$-semigroup $S_U$ on $L^2$ associated with
the form $\form+U$, where $\mkU \in \U$. 
Using Assumption~\ref{assump8} we will show in Proposition~\ref{appr} that $S_U$
extrapolates to a family of consistent $C_0$-semigroups $S_\Up$ on $L^p$, $p \in J$,
with an estimate of the growth bound independent of $\mkU$.
If $p \in \inter J$, then we also obtain a common sector of analyticity of the semigroups $S_\Up\mkern1mu$.
\pagebreak[1]

We first show that Assumption~\ref{assump8} extends to all $p \in \sinter{J}$
(see~\eqref{J-def} for the definition of $J$).

\begin{lemma} \label{lcomplex204}
Let Assumptions~\ref{assump1}~--~\ref{assump8} be satisfied, and let $p\in[1,\infty)$.
Then $p \in \sinter{J}$ if and only if there exist $\mu_p>0$ and
$\omega_p\in\R$ such that
\begin{equation}\label{mu-omega}
\tau_p(v) \ge \mu_p h_0(v) - \omega_p\|v\|_2^2
\end{equation}
for all $v \in D(h_0)$.
In particular, $\sinter{J} \neq \varnothing$.
\end{lemma}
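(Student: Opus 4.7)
The proof of the equivalence breaks into two implications, after which the final claim $\sinter{J} \neq \varnothing$ follows by applying the ``$\Leftarrow$'' direction to $p_0$, using the coercivity supplied by Assumption~\ref{assump8}. The two directions exploit different features of $\tau_p$: the ``$\Leftarrow$'' direction rests on a Lipschitz-type continuity estimate for $\tau_p$ in the parameter $p$, while the ``$\Rightarrow$'' direction rests on the concavity of $\frac{1}{p} \mapsto \tau_p(v)$ combined with Assumption~\ref{assump8}.

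For the ``$\Leftarrow$'' direction, I would establish
\[
  |\tau_p(v) - \tau_q(v)| \le C\bigl|\tfrac1p - \tfrac1q\bigr|\bigl(h_0(v) + \|v\|_2^2\bigr)
\]
for all $p,q \in [1,\infty)$ and $v \in D(h_0)$, with $C$ depending only on the data. By the explicit formula \eqref{tau_p} the only $p$-dependent coefficients are $(1-\tfrac{2}{p})^2$, $|1-\tfrac{2}{p}|$ and $1-\tfrac{2}{p}$, each Lipschitz in $\tfrac{1}{p}$ on $[0,1]$ with bounds independent of $p,q$. The accompanying $v$-dependent factors must each be bounded by a constant multiple of $h_0(v) + \|v\|_2^2$: for $\amax(|v|)$ I would use that $h_0$ is a restriction of $h_{\max} = \amax + V^+$ and the pointwise identity $\<\cA_0^\s\nabla v,\nabla v\> = \<\cA_0^\s\nabla|v|,\nabla|v|\> + \<\cA_0^\s\eta(v),\eta(v)\>$; for $\int|\<\cA_1^\s\nabla|v|,\eta(v)\>|$ I would combine \eqref{alpha-s} with Cauchy--Schwarz and the same decomposition, obtaining the bound $\frac{\alpha_\s}{2}\amax(v) \le \frac{\alpha_\s}{2} h_0(v)$; for $(|v|\RRe(\bplusb),\nabla|v|)$ I would use Cauchy--Schwarz and Assumption~\ref{assump4}. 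Given this continuity estimate, if \eqref{mu-omega} holds at $p$ with $\mu_p > 0$ and $q$ is chosen with $C|\tfrac1p - \tfrac1q| \le \tfrac{\mu_p}{2}$, then $\tau_q(v) \ge \tfrac{\mu_p}{2}\, h_0(v) - \omega'_q\|v\|_2^2$ for some $\omega'_q \in \R$; hence $q \in J$ and $p$ lies in an open subinterval of $J$, i.e.\ $p \in \sinter{J}$.

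For the ``$\Rightarrow$'' direction, first observe that $s \mapsto \tau_{1/s}(v)$ is concave on $(0,1]$, since the coefficients $-(1-2s)^2$ and $-|1-2s|$ are concave in $s$ with nonnegative accompanying $v$-factors, while the remaining term is linear in $s$. If $p = p_0$, then \eqref{mu-omega} is exactly Assumption~\ref{assump8}. If $p \neq p_0$, then $p \in \sinter{J}$ forces $p > 1$ and forces a neighborhood of $p$ in $\R$ to lie in $J$, so I may pick $p_1 \in J$ strictly on the side of $p$ opposite to $p_0$. Then $\tfrac1p = \lambda\tfrac{1}{p_0} + (1-\lambda)\tfrac{1}{p_1}$ for some $\lambda \in (0,1)$, and concavity together with Assumption~\ref{assump8} and $\tau_{p_1}(v) \ge -(\tomega_{p_1}+1)\|v\|_2^2$ yields
\[
  \tau_p(v) \ge \lambda\tau_{p_0}(v) + (1-\lambda)\tau_{p_1}(v) \ge \lambda\mu\, h_0(v) - \bigl(\lambda\omega + (1-\lambda)(\tomega_{p_1}+1)\bigr)\|v\|_2^2,
\]
which is \eqref{mu-omega} with $\mu_p = \lambda\mu > 0$. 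The main technical step is the continuity estimate in the ``$\Leftarrow$'' direction; the most delicate part is bounding the middle term of $\tau_p$, where the form-bound \eqref{alpha-s} and the orthogonal splitting of $\nabla v$ into $\nabla|v|$ and $\eta(v)$ components must be used carefully to control the integrand by $\amax(v) \le h_0(v)$.
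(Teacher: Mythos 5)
Your strategy coincides with the paper's: the Lipschitz estimate \eqref{tau-lip} in the variable $\rfrac1p$ for the ``if'' direction, and concavity of $\rfrac1p\mapsto\tau_p(v)$ together with Assumption~\ref{assump8} for the ``only if'' direction; both of these ingredients are set up correctly, and your bounds for the individual terms of $\tau_p$ (in particular the splitting $\inner{\cA_0^\s\nabla v}{\nabla v}=\inner{\cA_0^\s\nabla|v|}{\nabla|v|}+\inner{\cA_0^\s\eta(v)}{\eta(v)}$ and the estimate of the $\cA_1^\s$-term by $\frac{\alpha_\s}{2}\amax(v)$) are sound. However, there is a genuine gap at the endpoint $p=1$. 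In the ``if'' direction you conclude that ``$p$ lies in an open subinterval of $J$''; when $p=1$ this is false, because the set of $q\in[1,\infty)$ with $C|\rfrac1p-\rfrac1q|\le\frac{\mu_p}{2}$ is of the form $[1,q_1]$, which is not a neighbourhood of $1$ in $\R$, and $1$ can never lie in $\sinter{J}$ --- this is precisely the convention you invoke yourself in the ``only if'' direction when you assert that $p\in\sinter{J}$ forces $p>1$. So your argument does not establish the implication for $p=1$, and your deduction of $\sinter{J}\ne\varnothing$ from Assumption~\ref{assump8} tacitly assumes $p_0>1$, which is not part of that assumption.

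The paper closes this gap by proving that \eqref{mu-omega} can never hold at $p=1$, so that the implication is vacuous there: for real-valued $v$ one has $\tau_1(v)=2\dual{(\RRe b_1)v}{\nabla v}+V(v)\le\frac{\mu_1}{2}h_0(v)+\rfrac{2}{\mu_1}\mkU(v)$ with $U=\inner{(\cA_0^\s)^{-1}\RRe b_1}{\RRe b_1}+V^+$, so coercivity at $p=1$ would force $\frac{\mu_1}{2}h_0\le\rfrac{2}{\mu_1}\mkU+\omega_1$ on $D(h_0)$, hence by Lemma~\ref{dense} one would get $\Q(U)\subseteq D(h_0)\subseteq W^{1,1}_\loc$, which is absurd since the indicator functions $\ind_{B(x,1/n)\cap[U\le n]}$ belong to $\Q(U)$ but not to $W^{1,1}_\loc$. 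Some argument of this kind ruling out $p=1$ is indispensable; without it the ``if and only if'' is not established for $p=1$, nor is $p_0\in\sinter{J}$.
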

\begin{proof}
Assumptions \ref{assump1}~--~\ref{assump4} imply that there
is a constant $C>0$ such that
\begin{equation}\label{tau-lip}
|\tau_p(v)-\tau_q(v)| \le C\left|\frac1p-\frac1q\right| (h_0+1)(v)
\avoidbreak
\end{equation}
for all $v \in D(h_0)$ and $p,q \in [1,\infty]$.

Now let $p\in[1,\infty)$ satisfy~\eqref{mu-omega} for some $\mu_p>0$ and
$\omega_p\in\R$. Assume that $p=1$.
Set $U = \inner{(\cA_0^\s)^{-1}\RRe b_1}{\RRe b_1} + V^+$;
then an easy computation shows that
$\tau_1(v) = 2\dual{(\RRe b_1)v}{\nabla v} + V(v)
         \le \frac{\mu_1}{2} h_0(v) + \rfrac{2}{\mu_1} U(v)$
for all real-valued $v\in D(h_0)$.
From this estimate and~\eqref{mu-omega} one deduces that
$\frac{\mu_1}{2} h_0 \le \rfrac{2}{\mu_1} \mkU + \omega_1$ on $D(h_0)$.
By Lemma~\ref{dense} below (applied with $L=D(h_0)$) it follows that
$\Q(U) \subseteq D(h_0) \subseteq \smash{W^{1,1}_\loc}$, which is impossible:
For a given $x\in\Omega$ and large enough $n\in\N$,
the indicator function $\ind_{B(x,1/n)\cap[U\le n]}$
lies in $\Q(U)$ but not in $W^{1,1}_\loc\mkern-1mu$.
Thus we have shown $p>1$.
Now~it follows from \eqref{mu-omega} and~\eqref{tau-lip} that $p \in \sinter{J}$.
In particular, we have shown that $p_0 \in \sinter{J}$.

Conversely, \eqref{mu-omega} is valid for $p=p_0$ by Assumption~\ref{assump8}.
Moreover, $\tau_q(v) \ge - \tomega_q \|v\|_2^2$ for all $q\in J$
and $v\in D(h_0)$ by~\eqref{omegap-tilde-def}.
Then by the concavity of $\rfrac1p \mapsto \tau_p(v)$
we obtain \eqref{mu-omega} for all $p \in \inter{J}$.
\end{proof}

\begin{lemma}\label{dense}
Let $(X,\mu)$ be a measure space, let $L$ be a dense sublattice of $L^2(\mu)$,
and let $U\from X\to[0,\infty)$ be a measurable function satisfying $L \subset \Q(U)$.
Then $L$ is dense in $\Q(U)$.
\end{lemma}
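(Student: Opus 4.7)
The plan is to equip $\Q(U)$ with its natural Hilbert space inner product $(u,v)_U := \int u\bar v\,(1+U)\,d\mu$, so that convergence in $\Q(U)$ amounts to convergence in both $L^2(\mu)$ and $\int U|u_n - u|^2 \to 0$. I would then approximate a given $u \in \Q(U)$ by elements of $L$ through a two-level dominated-convergence argument that exploits the sublattice property.

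First I would reduce to real nonneg $u$. The decomposition $u=(\RRe u)^+-(\RRe u)^-+i((\IIm u)^+-(\IIm u)^-)$ expresses $u$ as a combination of four nonneg real functions, each pointwise dominated by $|u|$ and hence in $\Q(U)$. Since $L$ is a sublattice (so closed under the relevant operations), it suffices to approximate each such component in $\Q(U)$ by elements of $L$.

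For $u \ge 0$ real I would start with any sequence $v_m \in L$ with $v_m \to u$ in $L^2(\mu)$ (supplied by the density assumption), pass to a subsequence to arrange a.e.\ convergence, and replace $v_m$ by $|v_m| = v_m \vee (-v_m) \in L$ to arrange $v_m \ge 0$. The central construction is the double sequence $w_{m,n} := v_n \wedge v_m \in L$. For fixed $m$ and $n \to \infty$, one has $w_{m,n} \to u \wedge v_m$ a.e.\ with pointwise bound $0 \le w_{m,n} \le v_m$, and $(1+U)v_m^2 \in L^1$ since $v_m \in L \subset \Q(U)$; dominated convergence gives $w_{m,n} \to u \wedge v_m$ in $\Q(U)$. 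Letting $m \to \infty$, $u \wedge v_m \to u$ a.e.\ with dominating function $u \in \Q(U)$, so another application of dominated convergence yields $u \wedge v_m \to u$ in $\Q(U)$. A diagonal extraction then produces $w_{m,n(m)} \in L$ converging to $u$ in $\Q(U)$.

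The main obstacle is recognising that the sublattice structure, which only a priori provides $L^2$-density, is exactly what is needed to manufacture approximations within $L$ with enough pointwise control for dominated convergence in the stronger $\Q(U)$-topology. Crucially, truncation within $L$ cannot be performed by cutoff against a constant (since constants need not lie in $L$), but the pair operation $v_n \wedge v_m \in L$ performs the truncation and automatically comes with the pointwise bound by $v_m \in \Q(U)$, which is what drives the dominated convergence.
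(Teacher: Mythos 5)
Your proof is correct and follows essentially the same route as the paper: the paper likewise reduces to $0\le f\in\Q(U)$, takes a nonnegative a.e.-convergent $L^2$-approximating sequence $(f_n)$ in $L$, and uses the double truncation $f_n\wedge f_k\to f\wedge f_k\to f$ in $\Q(U)$ via dominated convergence. The only cosmetic difference is that you spell out the reduction to nonnegative parts and the final diagonal extraction, which the paper leaves implicit.
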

\begin{proof}
It suffices to show that the closure of $L$ in $\Q(U)$ contains every
function $0 \le f \in \Q(U)$.
Let $(f_n)$ be a sequence in $L$ converging to $f$ in $L^2(\mu)$,
without loss of generality $0 \le f_n \to f$ a.e.
Then for every $k\in\N$ we obtain
\[
  L \ni f_n\wedge f_k \to f\wedge f_k
\]
in $\Q(U)$ as $n\to\infty$, by the dominated convergence theorem.
Moreover, $f\wedge f_k \to f$ as $k\to\infty$ in $\Q(U)$.
It follows that $f$ lies in the closure of $L$ in $\Q(U)$.
\end{proof}

In the following we fix some $\mu_p>0$ and $\omega_p\in\R$ as in Lemma~\ref{lcomplex204},
for every $p \in \sinter{J}$.
The next result is a simple consequence of Lemma~\ref{lcomplex204}.

\begin{proposition}\label{closed}
Let Assumptions~\ref{assump1}~--~\ref{assump8} be satisfied,
and suppose that $2\in\sinter{J}$.
Then $\form$ is a closed sectorial form in $L^2$.
\end{proposition}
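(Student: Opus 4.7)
The plan is to use the assumption $2 \in \sinter{J}$ together with Lemma~\ref{lcomplex204} to produce a lower bound on $\RRe\form$ in terms of $h_0$, and then combine this with the boundedness of $\form$ on $D(h_0)$ already noted just after~\eqref{form-def} to obtain both sectoriality and closedness.

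The first step is to identify $\tau_2$ with $\RRe\form$ on $D(h_0)$. For $p=2$ the prefactors $1-\rfrac2p$ and $|1-\rfrac2p|$ in the definition of $\tau_p$ vanish, leaving
\[
  \tau_2(v) = \RRe\dual{\cA\nabla v}{\nabla v}
             + \dual{|v|\RRe(b_1-b_2)}{\nabla|v|}
             - \dual{|v|\IIm(\bplusb)}{\eta(v)} + V(v).
\]
Using the decomposition $\nabla v = (\sgn v)(\nabla|v| + i\eta(v))$, one expands $\RRe\dual{b_1\cdot\nabla v}{v}$ and $\RRe\dual{b_2 v}{\nabla v}$ directly from~\eqref{form-def} and recovers precisely the right-hand side above; thus $\tau_2(v) = \RRe\form(v)$ for every $v\in D(h_0)$.

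Since $2\in\sinter{J}$, Lemma~\ref{lcomplex204} then supplies constants $\mu_2>0$ and $\omega_2\in\R$ with $\tau_2(v)\ge\mu_2 h_0(v)-\omega_2\|v\|_2^2$ on $D(h_0)$. Combined with the identification above and with the boundedness of $\form$ on the Hilbert space $(D(h_0),(h_0+1)^{1/2})$ (a consequence of Assumptions~\ref{assump3} and~\ref{assump4}), this yields
\[
  \mu_2 h_0(v) \le \RRe\form(v)+\omega_2\|v\|_2^2 \le C(h_0+1)(v) \quad (v\in D(h_0))
\]
for some $C>0$, so that the norms $\bigl(\RRe\form+(\omega_2+1)\|\cdot\|_2^2\bigr)^{1/2}$ and $(h_0+1)^{1/2}$ are equivalent on $D(h_0)$.

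Sectoriality then follows from $|\IIm\form(v)| \le |\form(v)| \le C(h_0+1)(v) \le \rfrac{C}{\mu_2}\bigl(\RRe\form(v)+(\omega_2+\mu_2)\|v\|_2^2\bigr)$, and closedness follows because the above norm equivalence transfers the completeness of $(D(h_0),(h_0+1)^{1/2})$ -- which is the closedness of the Dirichlet form $h_0$ from Assumption~\ref{assump2} -- to $D(\form)=D(h_0)$ equipped with the graph norm of $\form+\omega_2$. I expect no genuine obstacle here; the only nontrivial step is the identification $\tau_2=\RRe\form$, for which one must be careful that the imaginary parts of the drift coefficients do survive in $\tau_2$ through the $\eta(v)$ term, even though the $p=2$ prefactor annihilates both the symmetric imaginary contribution from $\cA_1^\s$ and the $\nabla|v|$-correction involving $\RRe(\bplusb)$.
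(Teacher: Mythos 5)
Your proof is correct and follows essentially the same route as the paper: identify $\RRe\form(v)=\tau_2(v)$ (the paper reads this off directly from~\eqref{tau_p}, where all correction terms carry a factor of $1-\rfrac2p$), invoke Lemma~\ref{lcomplex204} for the lower bound, and combine with the boundedness of $\form$ on $D(h_0)$. You merely spell out the concluding norm-equivalence argument that the paper leaves implicit.
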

\begin{proof}
It follows from~\eqref{tau_p} that $\RRe \form(v) = \tau_2(v)$ for all $v \in D(h_0)$.
Thus, by Lemma~\ref{lcomplex204} there exist $\mu_2>0$ and $\omega_2\in\R$ such that
$\RRe \form(v) \ge \mu_2 h_0(v) - \omega_2\|v\|_2^2$ for all $v \in D(h_0)$.
This implies the assertion since $\form$ is a bounded form on $D(h_0)$.
\end{proof}

We define the potential $\Uhat \colon \Omega \to [0,\infty)$ by 
\begin{equation}\label{Uhat}
  \Uhat := \frac14 \+ \inner{(\cA_0^s)^{-1} \RRe(\bplusb)}{\RRe(\bplusb)}.
\end{equation}
This potential will play an important role in the proof of Theorem~\ref{mainresult2}.

\begin{lemma} \label{lcomplex202}
Let Assumptions~\ref{assump1}~--~\ref{assump8} be satisfied.
\begin{alplist}[i]
\item \label{lcomplex202-1}
Then $\mkUhat \in \U$ and $\form+\Uhat$ is closed.
In particular, $\U\ne\varnothing$.
\item \label{lcomplex202-2}
If $\mkU \in \U$ satisfies $U \ge \mkUhat - c_1$ for some $c_1\in\R$, then $\form+U$ is closed.
\end{alplist}
\end{lemma}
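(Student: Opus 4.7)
The plan is to reduce both closedness and sectoriality of $\form+\Uhat$ to a single coercivity estimate of the form
\[
\RRe(\form+\Uhat)(v) \ge \mu\+ h_0(v) - \omega\|v\|_2^2 \qquad (v \in D(h_0))
\]
for some $\mu>0$ and $\omega\in\R$. I would first check that Assumption~\ref{assump4}, together with the pointwise inequality $\<(\cA_0^\s)^{-1}\RRe(\bplusb),\RRe(\bplusb)\> \le 2(\<(\cA_0^\s)^{-1}b_1,b_1\>+\<(\cA_0^\s)^{-1}b_2,b_2\>)$, gives $\Uhat \le \tfrac{c}{2}(h_0+1)$, so that $\Q(\Uhat) \supseteq D(h_0)$ and $D(\form+\Uhat) = D(h_0)$. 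Combined with the boundedness $|\form(v)| \le C(h_0+1)(v)$ (a straightforward consequence of~\ref{assump3} and~\ref{assump4}), the coercivity estimate implies that the form-graph norm of $\form+\Uhat$ is equivalent to the $h_0$-graph norm on $D(h_0)$; since $h_0$ is closed by Assumption~\ref{assump2}, $\form+\Uhat$ is then closed. Sectoriality follows because $|\IIm(\form+\Uhat)(v)| = |\IIm\form(v)|$ is likewise controlled by a multiple of $(h_0+1)(v)$; hence $\Uhat\in\U$ and in particular $\U\ne\varnothing$.

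To produce the coercivity estimate I would exploit Lemma~\ref{lcomplex204} to pick some $p_0 \in \sinter{J}$, so that $\tau_{p_0}(v) \ge \mu_{p_0} h_0(v) - \omega_{p_0}\|v\|_2^2$. Rearranging the defining identity~\eqref{tau_p} expresses $\RRe\form(v)$ as the sum of $\tau_{p_0}(v)$, three manifestly nonnegative contributions, and the cross term $(1-\tfrac{2}{p_0})\dual{|v|\RRe(\bplusb)}{\nabla|v|}$, which may be negative. The key observation is that this cross term is exactly what $\Uhat$ is designed to absorb: the weighted Cauchy--Schwarz inequality (pointwise with $\cA_0^\s$-weight, then in $L^2$) gives
\[
  \bigl|\dual{|v|\RRe(\bplusb)}{\nabla|v|}\bigr| \le 2\+\Uhat(v)^{1/2}\+\amax(|v|)^{1/2},
\]
and the elementary inequality $2ab\le a^2+b^2$ with $a=\Uhat(v)^{1/2}$ and $b=|1-\tfrac{2}{p_0}|\+\amax(|v|)^{1/2}$ then yields
\[
  \bigl|(1-\tfrac{2}{p_0})\+\dual{|v|\RRe(\bplusb)}{\nabla|v|}\bigr| \le \Uhat(v) + (1-\tfrac{2}{p_0})^2\+\amax(|v|).
\]
Adding $\Uhat(v)$ to the rearranged identity and invoking this bound cancels precisely the $\Uhat(v)$ and the $(1-\tfrac{2}{p_0})^2\amax(|v|)$ contributions, leaving $\RRe(\form+\Uhat)(v) \ge \tau_{p_0}(v)$, and hence the desired estimate.

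For part~\ref{lcomplex202-2}, I would write $\form+U = (\form+\Uhat) + (U-\Uhat)$. Since both $U$ and $\Uhat$ are bounded by $C(h_0+1)$, the real-valued symmetric perturbation $U-\Uhat$ is a bounded form on $D(h_0)$; the hypothesis $U\ge\Uhat - c_1$ gives $(U-\Uhat)(v) \ge -c_1\|v\|_2^2$. Adding this lower bound to the estimate from~\ref{lcomplex202-1} produces $\RRe(\form+U)(v) \ge \mu_{p_0} h_0(v) - (\omega_{p_0}+c_1)\|v\|_2^2$, and the same norm-equivalence argument as in~\ref{lcomplex202-1} shows that $\form+U$ is closed.

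The main obstacle is recognizing the Cauchy--Schwarz/Young absorption step: the precise coefficient $\tfrac{1}{4}$ and the combination $\RRe(\bplusb)$ in~\eqref{Uhat} are dictated by the requirement that the two quantities Young produces on the right, namely $\Uhat(v)$ and $(1-\tfrac{2}{p_0})^2\amax(|v|)$, match exactly the $\Uhat(v)$ added on the left and one of the already-nonnegative terms on the right of the rearranged identity.
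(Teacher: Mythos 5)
Your proposal is correct and follows essentially the same route as the paper: the paper's proof likewise rearranges \eqref{tau_p}, drops the nonnegative term involving $\cA_1^\s$, and absorbs the cross term $(1-\rfrac2p)\dual{|v|\RRe(\bplusb)}{\nabla|v|}$ via weighted Cauchy--Schwarz and Young into $\Uhat(v)+(1-\rfrac2p)^2\amax(|v|)$, arriving at $\RRe(\form+\Uhat)\ge\tau_{p_0}\ge\mu h_0-\omega\|\cdot\|_2^2$ and deducing closedness and sectoriality from the boundedness of $\form$ on $D(h_0)$; part~(ii) is handled by the same two-sided comparison with $h_0+1$ that you give.
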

\begin{proof}
\ref{lcomplex202-1}
One easily sees that $\mkUhat \le \frac c2 (h_0+1)$,
where $c > 0$ is as in Assumption~\ref{assump4}.
It follows from~\eqref{tau_p} that
\begin{align}
\RRe\form(v)
 &\ge \tau_p(v) + \bigl(1-\rfrac2p\bigr) \dual{|v|\RRe(\bplusb)}{\nabla|v|}
                + (1-\rfrac2p)^2 \amax(|v|) \nonumber \\
 &\ge \tau_p(v) - \tfrac14 \dual{|v|(\cA_0^s)^{-1} \RRe(\bplusb)}{|v|\RRe(\bplusb)}
      \alignstrut \nonumber \\
 &= \tau_p(v) - \Uhat(v) \alignstrut \label{t+Uhat}
\end{align}
for all $v\in D(h_0)$ and $p \in [1,\infty)$. 
Thus, Assumption~\ref{assump8} implies that $\form+\Uhat$ is a closed sectorial form.
In particular we obtain $\mkUhat \in \U$.

\pagebreak[1]

\ref{lcomplex202-2}
By the assumptions on $U$ there exists a $C>0$ such that
$\RRe\form+\mkUhat - c_1 \le \RRe\form + \mkU \le C(h_0 + 1)$.
Hence the assertion follows from \ref{lcomplex202-1}.
\end{proof}

In the following lemma we provide some estimates on the form $\form$
that are needed for the proof of Proposition~\ref{appr}, the main result of this section.
Here we adopt the convention $0^s = \infty$ for all $s<0$,
so that $0^s\wedge r = r$ for all $r\ge0$.

\begin{lemma}\label{accr}
Let Assumptions~\ref{assump1}~--~\ref{assump4} be satisfied.
Let $u \in D(h_0)$, $r>1$ and $p \in (1,\infty)$.
Set $v = u\bigl(|u|^{\frac p2 -1}\wedge r\bigr)$,
$w = u\bigl(|u|^{p-2}\wedge r^2\bigr)$ and $\smash{ \chi = \chiind{\ge r}}$.
Then $v,w \in D(h_0)$ and
\begin{equation}\label{form-bound-below}
\RRe \form(u,w) \ge \tau_p(v) - (\chi\Uhat)(v).
\end{equation}
Moreover, there exists a $c_0>0$, depending only on the
constants in Assumptions~\ref{assump3} and~\ref{assump4}, such
that
\begin{equation}\label{im-est}
  \bigl|\IIm \form(u,w)\bigr| \le c_0(h_0+1)(v).
\end{equation}
\end{lemma}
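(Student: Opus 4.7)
The plan is to partition $\Omega=E_1\cup E_2$ with $E_1:=[|u|^{p-2}<r^2]$ and $E_2:=[|u|^{p-2}\ge r^2]=[\chi=1]$, compute $\nabla v$ and $\nabla w$ explicitly by the chain rule on each region, and compare with~\eqref{tau_p}. Since $v$ and $w$ are obtained from $u$ by composition with bounded Lipschitz maps $F_1,F_2\from\C\to\C$ with $F_j(0)=0$, they are, up to a multiplicative constant, normal contractions of $u$; by Assumption~\ref{assump2} ($h_0$ is a Dirichlet form) this gives $v,w\in D(h_0)$. Using the a.e.\ identity $\nabla u=(\sgn u)(\nabla|u|+i\eta(u))$ on $[u\ne 0]$ (with $\nabla u=0$ on $[u=0]$), the chain rule yields on $E_1$
\[
\nabla u = (\sgn u)|u|^{1-p/2}\bigl(\tfrac{2}{p}\nabla|v|+i\eta(v)\bigr),\qquad
\nabla w = (\sgn u)|u|^{p/2-1}\bigl(\tfrac{2}{p'}\nabla|v|+i\eta(v)\bigr),
\]
and on $E_2$ the relations $\nabla v=r\nabla u$, $\nabla w=r\nabla v$. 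The pointwise identity $u\overline w=|v|^2$ holds throughout $\Omega$, so $\RRe\dual{Qu}{w}=V(v)$.

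For~\eqref{form-bound-below}, substitute into each part of $\RRe\form(u,w)$. On $E_1$ the scaling $(2/p)(2/p')=4/(pp')$, the identity $1-4/(pp')=(1-2/p)^2$, and the decomposition $\RRe\<\cA\nabla v,\nabla v\>=\<\cA_0^\s\nabla|v|,\nabla|v|\>+\<\cA_0^\s\eta(v),\eta(v)\>+2\<\cA_1^\a\nabla|v|,\eta(v)\>$ yield
\[
\RRe\<\cA\nabla u,\nabla w\>
= \RRe\<\cA\nabla v,\nabla v\>-\bigl(1-\tfrac{2}{p}\bigr)^2\<\cA_0^\s\nabla|v|,\nabla|v|\>-2\bigl(1-\tfrac{2}{p}\bigr)\<\cA_1^\s\nabla|v|,\eta(v)\>,
\]
and the drift and potential contributions match the respective terms of~\eqref{tau_p} exactly; the bound $-2(1-\tfrac{2}{p})\<\cA_1^\s\nabla|v|,\eta(v)\>\ge-2|1-\tfrac{2}{p}|\,|\<\cA_1^\s\nabla|v|,\eta(v)\>|$ then reproduces the absolute value in~\eqref{tau_p}. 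On $E_2$ the chain rule gives $\RRe\<\cA\nabla u,\nabla w\>=\RRe\<\cA\nabla v,\nabla v\>$ and the drift terms come with coefficient $1$ instead of $\tfrac{2}{p}$ or $\tfrac{2}{p'}$. Combining both regions, $\RRe\form(u,w)-\tau_p(v)$ is bounded below by
\[
\int_{E_2}\Bigl\{\bigl(1-\tfrac{2}{p}\bigr)^2\<\cA_0^\s\nabla|v|,\nabla|v|\>+2|1-\tfrac{2}{p}|\,|\<\cA_1^\s\nabla|v|,\eta(v)\>|+\bigl(1-\tfrac{2}{p}\bigr)|v|\RRe(\bplusb)\cdot\nabla|v|\Bigr\}.
\]
Setting $\xi:=(1-\tfrac{2}{p})\nabla|v|$ and $\zeta:=\tfrac{|v|}{2}(\cA_0^\s)^{-1}\RRe(\bplusb)$, one verifies $\<\cA_0^\s\zeta,\zeta\>=\Uhat|v|^2$ and $2\<\cA_0^\s\xi,\zeta\>=(1-\tfrac{2}{p})|v|\RRe(\bplusb)\cdot\nabla|v|$, so the first and third integrands combined with $\chi\Uhat|v|^2$ assemble into the perfect square $\<\cA_0^\s(\xi+\zeta),\xi+\zeta\>\ge 0$; the $\cA_1^\s$ term is manifestly non-negative. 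This proves~\eqref{form-bound-below}.

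For~\eqref{im-est}, each of the four parts of $\form(u,w)$ is bounded separately. The pointwise modulus of $\<\cA\nabla u,\nabla w\>$ factors as $|\<\cA X_1,X_2\>|$ with $X_1,X_2$ linear combinations of $\nabla|v|$ and $i\eta(v)$; Assumption~\ref{assump3} bounds this by a $p$-dependent multiple of $\<\cA_0^\s\nabla v,\nabla v\>$, giving $\le C_p\amax(v)\le C_p h_0(v)$ after integration. For each drift term, the integrand $\overline w\,b_j\cdot\nabla u$ (or $u\,b_j\cdot\overline{\nabla w}$) reduces on both $E_1$ and $E_2$ to $|v|\,b_j\cdot Y$ with $Y$ a real linear combination of $\nabla|v|$ and $\eta(v)$; the pointwise bound $|b_j\cdot X|^2\le 2\<(\cA_0^\s)^{-1}b_j,b_j\>\<\cA_0^\s X,X\>$ (for complex $b_j$ and $X$), followed by Cauchy--Schwarz on $\Omega$ and Assumption~\ref{assump4}, yields $\le c(h_0+1)(v)$. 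Finally $\bigl|\IIm\int Q|v|^2\bigr|\le\int|Q|\,|v|^2\le c(h_0+1)(v)$ by~\ref{assump4}. The main obstacle is the algebraic identification on $E_2$: the precise form of $\Uhat$ in~\eqref{Uhat} is engineered so that $\<\cA_0^\s\zeta,\zeta\>=\Uhat|v|^2$ and $2\<\cA_0^\s\xi,\zeta\>=(1-\tfrac{2}{p})|v|\RRe(\bplusb)\cdot\nabla|v|$, making the completion of the square succeed.
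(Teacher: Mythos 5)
Your proof is correct and follows essentially the same route as the paper's: the same chain-rule computation of $\nabla u,\nabla w$ in terms of $\nabla v$ on the two regions $[|u|^{p-2}<r^2]$ and $[|u|^{p-2}\ge r^2]$, with your completion of the square on the second region being exactly the paper's Cauchy--Schwarz step $(1-\frac2p)\bigl|\langle\RRe(\bplusb),\xi\rangle\bigr|\le(1-\frac2p)^2\langle\cA_0^\s\xi,\xi\rangle+\Uhat|v|^2$, and the same term-by-term bound for the imaginary part. The only cosmetic point is that the constant in \eqref{im-est} must depend only on the constants in Assumptions~\ref{assump3} and~\ref{assump4}, so you should note that your ``$p$-dependent multiple'' is uniformly bounded, which is immediate from $|1-\frac2p|\le1$.
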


\begin{proof}
For any $s\in\R$ the function $\C \ni z \mapsto z(|z|^s\wedge r)$ is Lipschitz continuous;
this is an easy consequence of the Lipschitz continuity of $0\le x \mapsto x^{s+1}\wedge(rx)$.
Since $h_0$ is a Dirichlet form, it follows that $v,w\in D(h_0)$.

We define the complementary indicator function
$\chic := \ind_\Omega-\chi = \smash{\chiind{<r}}$ and observe that
\begin{equation}\label{uw}
  u = v\bigl(\rfrac1r \chi + \chic |v|^{\twop-1}\bigr)
  \quad \mbox{and} \quad
  w = v\bigl(r\chi + \chic |v|^{1-\twop}\bigr).
\end{equation}
Moreover, by \cite[Lemma~5.2]{sv02} we obtain
\begin{align*}
\nabla v
 &= r\chi \nabla u + \chic |u|^{\frac p2-1}\bigl( \nabla u + (\tfrac p2-1)(\sgn u)\nabla |u| \bigr), \\
\nabla w
 & = r^2\chi \nabla u + \chic |u|^{p-2}\bigl( \nabla u + (p-2)(\sgn u)\nabla |u| \bigr) \alignstrut
\end{align*}
and $\nabla|v| = (r\chi + \tfrac p2\chic |u|^{\frac p2-1})\nabla |u|$.
The latter implies that $\chic (1-\rfrac2p) \nabla |v| =
\chic (\frac p2-1) |u|^{\frac p2-1} \nabla |u|$.
Taking into account $\chi_\c|v| = \chi_\c|u|^\frac{p}{2}$,
we deduce that
\begin{align*}
\nabla u &= \rfrac1r \chi \nabla v + \chic |v|^{\twop-1}
            \bigl( \nabla v - (1-\rfrac2p) (\sgn v) \nabla |v| \bigr), \\
\nabla w &= r \chi \nabla v + \chic |v|^{1-\twop} \bigl( \nabla v + (1-\rfrac2p) (\sgn v) \nabla |v| \bigr), \alignstrut
\end{align*}
and denoting
\[
  \zeta = \overline{\sgn v} \, \nabla v, \quad
  \xi = \RRe\zeta = \nabla |v|, \quad
  \eta = \IIm \zeta,
  \smallbds
\]
we arrive at
\begin{equation}\label{grad-uw}
\begin{split}
\overline{\sgn v}\,\nabla u &= \rfrac1r \chi\zeta
  + \chic |v|^{\twop-1} \bigl( \zeta - (1-\rfrac2p) \xi \bigr), \\
\overline{\sgn v}\,\nabla w &= r \chi\zeta
  + \chic |v|^{1-\twop} \bigl( \zeta + (1-\rfrac2p) \xi \bigr). \alignstrut
\end{split}
\end{equation}
It follows that
\begin{align*}
\<\cA\nabla u, \nabla w\>
 &= \< \cA\,\overline{\sgn v}\,\nabla u, \overline{\sgn v}\,\nabla w \> \\[0.5ex]
 &= \<\cA\zeta, \zeta\>
    + (1-\rfrac2p) \chic \bigl( \<\cA\zeta, \xi\> - \<\cA\xi, \zeta\> \bigr)
    - (1-\rfrac2p)^2 \chic \<\cA\xi, \xi\>.
\end{align*}
Using the identity
\begin{align*}
\<\cA\zeta, \xi\> - \<\cA\xi, \zeta\>
 &= \<\cA i\eta, \xi\> - \<\cA\xi, i\eta\> \\[0.5ex]
 &= - \<(\cA+\cA^\top)\xi, i\eta\>
  = 2i\<\cA_0^\s\xi, \eta\> - 2\<\cA_1^\s\xi, \eta\>
\end{align*}
(recall $\cA = \cA_0 + i\cA_1$),
we thus infer that
\begin{equation}\label{ReA}
\RRe \<\cA\nabla u, \nabla w\>
  = \RRe \<\cA\zeta, \zeta\>
    - 2(1-\rfrac2p) \chic \<\cA_1^\s\xi, \eta\>
    - (1-\rfrac2p)^2 \chic \<\cA_0^\s\xi, \xi\>
\end{equation}
and
\begin{equation}\label{ImA}
\IIm \<\cA\nabla u, \nabla w\>
  = \IIm \<\cA\zeta, \zeta\>
    + 2(1-\rfrac2p) \chic \<\cA_0^\s\xi, \eta\>
    - (1-\rfrac2p)^2 \chic \<\cA_1^\s\xi, \xi\> \rlap.
\end{equation}

By \eqref{uw} and~\eqref{grad-uw} we have
\begin{align*}
\overline w \nabla u
 &= |v| \bigl( \zeta - (1-\rfrac2p) \chic \xi \bigr)
  = |v| \bigl( \xi+i\eta - (1-\rfrac2p) (\ind_\Omega-\chi) \xi \bigr) \\[0.8ex]
 &= |v| \bigl( \rfrac2p \xi + (1-\rfrac2p) \chi \xi + i\eta \bigr) \\[-1.3\baselineskip]
\end{align*}
and
\begin{align*}
u \nabla \overline w
 &= |v| \bigl( \+ \overline \zeta + (1-\rfrac2p) \chic \xi \bigr)
  = |v| \bigl( \xi-i\eta + (1-\rfrac2p) (\ind_\Omega-\chi) \xi \bigr) \\[0.8ex]
 &= |v| \bigl( \tfrac2{p'} \xi - (1-\rfrac2p) \chi \xi - i\eta \bigr) ,
\end{align*}
so it follows that
\begin{equation}\label{b1b2}
\begin{split}
\MoveEqLeft
\<b_1\cdot\nabla u, w\> - \< b_2\+u, \nabla w\> \\[0.5ex]
 &= |v| \bigl( \<\tfrac2p b_1 - \tfrac2{p'} b_2,\xi\>
          + (1-\rfrac2p) \chi \<\bplusb,\xi\> + i \<\bplusb,\eta\> \bigr) .
\end{split}
\end{equation}

Now we are ready to estimate $\RRe \form(u,w)$. Using the
definition~\eqref{Uhat} of~$\Uhat$ we obtain $(1-\rfrac2p)
\bigl|\<\RRe(\bplusb), \xi\>\bigr| \le (1-\rfrac2p)^2
\<\cA_0^\s\xi, \xi\> + \Uhat|v|^2$ and hence
\begin{align*}
\MoveEqLeft
\RRe \bigl( \<b_1\cdot\nabla u, w\> - \< b_2\+u, \nabla w\> \bigr) \\
 &\ge |v| \inner{\RRe(\tfrac2p b_1 - \tfrac2{p'} b_2)}{\xi}
     - (1-\rfrac2p)^2 \chi \<\cA_0^\s\xi, \xi\>
     - \chi \Uhat|v|^2 - |v| \<\IIm(\bplusb),\eta\> .
\end{align*}
Together with~\eqref{ReA} and the identity $(\RRe Q)(u,w) = V(v)$, we conclude
by the definition~\eqref{form-def} of $\form$ that
\begin{align*}
\RRe \form(u,w)
\ge {} & \smash[t]{ \RRe \dual{\cA \zeta}{\zeta}
          - 2|1-\rfrac2p| \int_{\Omega} \bigl|\<\cA_1^\s\xi, \eta\>\bigr|
          - (1-\rfrac2p)^2 \dual{\cA_0^\s\xi}{\xi}
          + V(v) } \\
       & {} + \dual{|v|\RRe(\tfrac2p b_1- \tfrac2{p'} b_2)}{\xi}
         - \dual{|v|\IIm(\bplusb)}{\eta} - (\chi\Uhat)(v) \\[0.8ex]
  = {} & \tau_p(v) - (\chi \Uhat)(v).
\end{align*}
This completes the proof of~\eqref{form-bound-below}.

\pagebreak[1]

Now we estimate $\bigl|\IIm \form(u,w)\bigr|$. It follows
from~\eqref{ImA} and Assumption~\ref{assump3} that
\begin{align*}
\bigl|\IIm \<\cA\nabla u, \nabla w \>\bigr|
 &= \bigl|\IIm\<\cA\zeta, \zeta \>
    + 2(1-\rfrac2p) \chic \<\cA_0^\s\xi, \eta\>
    - (1-\rfrac2p)^2 \chic \<\cA_1^\s\xi, \xi\>\bigr| \\
 &\le 3c \bigl( \<\cA_0^\s\xi,\xi\> + \<\cA_0^\s\eta,\eta\> \bigr)
    = 3c \<\cA_0^\s\nabla v, \nabla v\>.
\end{align*}
Next, with $U_j := \<(\cA_0^\s)^{-1} b_j, b_j\>$ for
$j=1,2$, we infer from~\eqref{b1b2} that
\begin{align*}
\bigl| \IIm \bigl( \<b_1\cdot\nabla u, w\> &- \< b_2\+u, \nabla w\> \bigr) \bigr| \\[0.5ex]
 &\le |v| \bigl( 2|\<\IIm b_1, \xi\>| + 2|\<\IIm b_2, \xi\>| + |\<\RRe(\bplusb),\eta\>| \bigr) \\[0.5ex]
 &\le 2\<\cA_0^\s\xi, \xi\> + 2 \<\cA_0^\s\eta, \eta\> + \tfrac14(U_1+U_2) |v|^2.
\end{align*}
Finally $\IIm Q(u,w) = \IIm Q(v)$. Hence \eqref{im-est} follows from Assumption~\ref{assump4}.
\end{proof}

In the following let $\h_p$ denote the lower semi-continuous hull of $\tau_p$,
for given $p\in J$; in other words, the functional $\h_p \colon L^2 \to (-\infty,\infty]$ 
is defined by
\[
  \h_p(v) := \sup\set{\h(v)}{\h\ \text{is lower semi-continuous on}\ L^2\
                             \text{and}\ \h \le \tau_p\ \text{on}\ D(h_0)}.
\]
By~\eqref{mu-omega} and \cite[Lemma~VIII.3.14a]{kato80} we have
\begin{equation}\label{mu-omega-used}
\h_p(v) \ge 
\begin{cases}
\mu_p h_0(v) - \omega_p\|v\|_2^2 & \text{if } v \in D(h_0), \\[0.4ex]
                          \infty & \text{if } v \in L^2 \setminus D(h_0)
\end{cases}
\end{equation}
for all $p\in\sinter J$. Similarly, 
\begin{equation}\label{omegap-tilde-def-used}
\h_p(v) \ge - \tomega_p \|v\|_2^2
\end{equation}
for all $v \in D(h_0)$ and
$p\in J$ by \eqref{omegap-tilde-def}.
If $\cA_1 = 0$, then it is not hard to show that $\tau_p$ is lower semi-continuous
for all $p \in \sinter J$, so $\h_p|_{D(h_0)} = \tau_p$ in that case.

For the next result recall that 
$S_U$ is the $C_0$-semigroup on $L^2$ associated with the closure of
$\form+U$, for given $\mkU\in\U$.

\begin{proposition}\label{appr}
Let Assumptions~\ref{assump1}~--~\ref{assump8} be satisfied.
Let $\mkU \in \U$ and $p\in J$.
Then the semigroup $S_U$ extrapolates to a $C_0$-semigroup $S_\Up$ on $L^p$,
and $\|S_\Up(t)\|_{p\to p}\le e^{\tomega_pt}$ for all $t\ge0$.

Let $-\gen_\Up$ be the generator of $S_\Up\mkern1mu$.
If $p\in\sinter{J}$, then for all $u\in D(\gen_\Up)$ we have
\begin{equation}\label{lconc}
  \RRe \dual{\gen_\Up u}{u|u|^{p-2}} \ge \h_p(u|u|^{\frac p2-1})
  \smallbds
\end{equation}
and
\begin{equation}\label{lanal}
\mkern-10mu  
   \left|\IIm\dual{\gen_\Up u}{u|u|^{p-2}}\right|
   \le \rfrac{c_0}{\mu_p}\RRe\dual{(\gen_\Up+\omega_p+\mu_p)u}{u|u|^{p-2}},
\mkern-10mu
\end{equation}
with $\mu_p>0$ and $\omega_p\in\R$ as in~\eqref{mu-omega}
and $c_0$ as in~\eqref{im-est}.
In particular, $\gen_\Up$ is an $m$-sectorial operator of
angle $\arctan\rfrac{c_0}{\mu_p}$ and $S_\Up$ is an analytic
semigroup on $L^p$.
\end{proposition}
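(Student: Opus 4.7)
The strategy is to exploit Lemma~\ref{accr} at each truncation level $r>1$: pair $\gen_U u$ with the bounded test function $w$, and then pass to the limit $r\to\infty$. This yields an $L^p$-dissipativity estimate together with a sector estimate for the generator, from which existence of $S_\Up$, the norm bound, and (for $p \in \sinter J$) analyticity will follow via the exponential formula and the Lumer--Phillips / numerical range criterion on $L^p$.

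Fix $u\in D(\gen_U)$ and $r>1$, and let $v,w,\chi$ be as in Lemma~\ref{accr}. Since $\form+U$ is closed sectorial by Lemma~\ref{lcomplex202} and $v,w\in D(h_0)$, one has $\dual{\gen_U u}{w}=\form(u,w)+U(u,w)$. The potential term $U(u,w)=\int_\Omega U|u|^2(|u|^{p-2}\wedge r^2)$ is real and nonnegative, so combining \eqref{form-bound-below} and \eqref{im-est} gives
\[
  \RRe\dual{\gen_U u}{w} \ge \tau_p(v) - (\chi\Uhat)(v), \qquad \bigl|\IIm\dual{\gen_U u}{w}\bigr| \le c_0(h_0+1)(v).
\]
From \eqref{omegap-tilde-def-used}, $\tau_p(v)\ge -\tomega_p\|v\|_2^2$. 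Now take $\lambda>\tomega_p$, $f\in L^2\cap L^p$ and $u:=(\lambda+\gen_U)^{-1}f$; pairing $\lambda u+\gen_U u=f$ with $w$, using the identity $\dual{u}{w}=\|v\|_2^2$, and using H\"older's inequality together with the pointwise bound $|w|^{p'}\le|u|^{p'(p-1)}=|u|^p$ (so $\|w\|_{p'}\le\|u\|_p^{p-1}$), one obtains
\[
  (\lambda-\tomega_p)\|v\|_2^2 \;\le\; \|f\|_p\,\|u\|_p^{p-1} + (\chi\Uhat)(v).
\]
Letting $r\to\infty$, monotone/dominated convergence yields $\|v\|_2^2\to\|u\|_p^p$ (interpreted as $+\infty$ if $u\notin L^p$) and $(\chi\Uhat)(v)\to 0$, since $\chi\Uhat|v|^2\le\Uhat|u|^p$ and $\{\chi=1\}$ collapses appropriately. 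One concludes $(\lambda-\tomega_p)\|u\|_p\le\|f\|_p$ for $f\in L^2\cap L^p$. By the Hille--Yosida exponential formula applied to the $L^p$-closure of the part of $\gen_U$ in $L^p$, together with density of $L^2\cap L^p$ in $L^p$, $S_U$ extrapolates to a $C_0$-semigroup $S_\Up$ on $L^p$ with $\|S_\Up(t)\|_{p\to p}\le e^{\tomega_p t}$.

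For $p\in\sinter J$, Lemma~\ref{lcomplex204} provides the stronger coercivity $\tau_p(v)\ge\mu_p h_0(v)-\omega_p\|v\|_2^2$. For $u\in D(\gen_\Up)$ one has $u\in L^p$ a priori, so the limits as $r\to\infty$ now converge in stronger senses: $v\to u|u|^{p/2-1}$ in $L^2$ and $w\to u|u|^{p-2}$ in $L^{p'}$, and $\dual{\gen_\Up u}{w}\to\dual{\gen_\Up u}{u|u|^{p-2}}$. Lower semicontinuity of $\h_p$ together with $\tau_p(v_r)\ge\h_p(v_r)$ yields the lower bound \eqref{lconc}, and combining the coercivity with the imaginary part bound $|\IIm\form(u,w)|\le c_0(h_0+1)(v)$ yields \eqref{lanal} after passage to the limit. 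The sector bound \eqref{lanal} is precisely the standard $L^p$ numerical range condition identifying $\gen_\Up$ as $m$-sectorial of angle $\arctan(c_0/\mu_p)$, whence $S_\Up$ is analytic.

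The main technical obstacle is the passage $r\to\infty$: one must guarantee convergence $(\chi\Uhat)(v)\to 0$ and establish a priori that $u\in L^p$ (with $\Uhat|u|^p\in L^1$) when $f\in L^2\cap L^p$, before the resolvent bound $(\lambda-\tomega_p)\|u\|_p\le\|f\|_p$ can be read off. The cleanest route is to establish the bound first on a dense class such as $u\in D(\gen_U)\cap L^\infty$, using Fatou's lemma and a preliminary $L^\infty$-truncation of $u$ to bypass the a priori integrability question, and then to bootstrap by density of the resulting dissipative subspace in $L^p$.
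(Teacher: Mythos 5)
Your overall strategy (pair $\gen_U u$ with the truncated test functions $w$ from Lemma~\ref{accr}, then let the truncation parameter tend to infinity) is the same as the paper's, but the step you yourself flag as ``the main technical obstacle'' is a genuine gap, and your proposed fix does not close it. Two things go wrong. First, for $u=(\lambda+\gen_U)^{-1}f$ with $f\in L^2\cap L^p$ you do not know a priori that $u\in L^p$, so neither the estimate $\|w\|_{p'}\le\|u\|_p^{p-1}$ nor the final division by $\|u\|_p^{p-1}$ is justified. The paper avoids this circularity with Lemma~\ref{extrapol}: from the truncated inequalities $\RRe\dual{(\omega+A)u}{w_n}\ge0$ one bounds $\|w_n\|_{p'}$ directly (using $|w_n|^{p'}\le u\olwn\in L^1$, which holds without any $L^p$-information on $u$) and only then applies Fatou to $|w_n|^{1/(p-1)}\to|u|$. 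Your alternative --- truncate $u$ in $L^\infty$ and use density --- does not work as stated, because an $L^\infty$-truncation of $u$ leaves $D(\gen_U)$, so you can no longer pair it with the generator. Second, the claim $(\chi\Uhat)(v)\to0$ requires $\Uhat|u|^p\in L^1$, which is again not available a priori (and is exactly as hard as the integrability of $|u|^p$ itself). The paper's resolution is a reduction you omit: first prove everything for potentials satisfying $U\ge\mkUhat-c$, where the troublesome term is absorbed at \emph{every finite truncation level} via $(\mkU-U_n)(v_n)\ge-c\|v_n\|_2^2$ (no limit of $(\chi\Uhat)(v_n)$ is ever taken for the extrapolation step), and then treat a general $\mkU\in\U$ by replacing it with $\Utilde_k=\mkU+(\mkUhat-k)^+$ and removing the extra potential through form convergence, \cite[Theorem~VIII.3.6]{kato80} and Lemma~\ref{srs}. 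Only after extrapolation is established does the paper deduce $\Uhat|u|^p\le(\mkU+c)|u|^p\in L^1$ from~\eqref{hp+U}, which is what legitimises $U_n(v_n)\to0$ in the proof of~\eqref{lanal}.

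Two smaller omissions: you do not treat the endpoint $p=1$ (Lemma~\ref{accr} needs $p>1$; the paper handles $1\in J$ by letting $p\downarrow1$ in the quasi-contractivity bounds), and you do not verify strong continuity of the extrapolated semigroup on $L^p$, which is needed for the ``$C_0$'' claim and is obtained in the paper from \cite[Proposition~1]{voi92}. The limiting arguments for \eqref{lconc} and \eqref{lanal} that you sketch are essentially the paper's (lower semicontinuity of $\h_p$, Observation~\ref{lsc}, and Lemma~\ref{srs} to pass from the core $D(\gen_U)\cap D(\gen_\Up)$ to all of $D(\gen_\Up)$), but they too rest on the reduction to $U\ge\mkUhat-c$ that your write-up lacks.
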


Although the above proposition is similar to \cite[Lemma~5.1]{sv02},
we provide a self-contained proof for the reader's convenience.
We will use the following two lemmas.

\begin{lemma}\label{srs}
Let $p \in (1,\infty)$, and let $(A_k)_{k \in \N}$ be a sequence of
closed operators in $L^p$ that converges in the strong resolvent sense
to a closed operator $A_\infty$ in~$L^p$.
\begin{alplist}[i]
\item \label{srs(a)} 
Let $\h \from L^2 \to \R\cup\{\infty\}$ be a lower semi-continuous functional,
and assume that
\begin{equation}\label{vp-est}
  \RRe\dual{A_k u}{u|u|^{p-2}} \ge \h\bigl(u|u|^{\frac p2 -1}\bigr)
\end{equation}
for all $k\in \N$ and $u\in D(A_k)$.
Then~\eqref{vp-est} also holds for $k=\infty$ and all $u \in D(A_\infty)$.

\item \label{srs(b)} 
Assume that there exist $C\ge0$ and $\omega\in\R$ such that
\begin{equation}\label{sect-est}
  \bigl|\IIm\dual{A_k u}{u|u|^{p-2}}\bigr| \le C \RRe\dual{(\omega+A_k)u}{u|u|^{p-2}}
\end{equation}
for all $k\in \N$ and $u\in D(A_k)$.
Then~\eqref{sect-est} also holds for $k=\infty$ and all $u \in D(A_\infty)$.

\item \label{srs(c)} 
Let $k \in \N$.
In both \eqref{vp-est} and~\eqref{sect-est},
the estimate holds for all $u\in D(A_k)$ if it is satisfied on a core for $A_k$.
\end{alplist}
\end{lemma}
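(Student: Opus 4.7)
The plan is to exhibit, for each $u \in D(A_\infty)$, an approximating sequence $(u_k)$ with $u_k \in D(A_k)$, $u_k \to u$ and $A_k u_k \to A_\infty u$ in $L^p$, and then pass to the limit in the pairings on both sides of~\eqref{vp-est} and~\eqref{sect-est}. For~\ref{srs(c)} an entirely analogous approximation is furnished directly by the core hypothesis.

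For \ref{srs(a)} and~\ref{srs(b)} I would begin from the definition of strong resolvent convergence: there exists some $\lambda$ lying in $\rho(A_\infty)$ and, eventually, in $\rho(A_k)$, with $(\lambda-A_k)^{-1} \to (\lambda-A_\infty)^{-1}$ strongly on $L^p$. Given $u \in D(A_\infty)$, setting $f := (\lambda-A_\infty)u$ and $u_k := (\lambda-A_k)^{-1}f \in D(A_k)$ yields automatically $u_k \to u$ and $A_k u_k = \lambda u_k - f \to A_\infty u$ in $L^p$. The key technical ingredient is that for $p \in (1,\infty)$ the nonlinear maps $v \mapsto v|v|^{p-2}$ from $L^p$ to $L^{p'}$ and $v \mapsto v|v|^{p/2-1}$ from $L^p$ to $L^2$ are continuous; granting this, one obtains $u_k|u_k|^{p-2} \to u|u|^{p-2}$ in $L^{p'}$ and $u_k|u_k|^{p/2-1} \to u|u|^{p/2-1}$ in $L^2$, so that $\dual{A_k u_k}{u_k|u_k|^{p-2}} \to \dual{A_\infty u}{u|u|^{p-2}}$. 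For~\ref{srs(a)}, lower semi-continuity of $\h$ then gives
\[
  \h\bigl(u|u|^{\tfrac p2-1}\bigr)
  \le \liminf_{k\to\infty} \h\bigl(u_k|u_k|^{\tfrac p2-1}\bigr)
  \le \liminf_{k\to\infty} \RRe\dual{A_k u_k}{u_k|u_k|^{p-2}}
  = \RRe\dual{A_\infty u}{u|u|^{p-2}}.
\]
For~\ref{srs(b)}, noting that $\dual{u_k}{u_k|u_k|^{p-2}}=\|u_k\|_p^p$, both sides of~\eqref{sect-est} are continuous under these convergences and the inequality passes to the limit.

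For~\ref{srs(c)}, I would fix $k \in \N$ and $u \in D(A_k)$, choose a sequence $(v_n)$ in the core with $v_n \to u$ and $A_k v_n \to A_k u$ in $L^p$, and repeat the same limiting argument: continuity of the nonlinear maps handles the right-hand sides of~\eqref{vp-est} and~\eqref{sect-est}, while lower semi-continuity of $\h$ (for~\eqref{vp-est}) or continuity of both sides (for~\eqref{sect-est}) deals with the left-hand side.

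The main obstacle is verifying continuity of the Mazur-type maps $v \mapsto v|v|^{p-2}$ and $v \mapsto v|v|^{p/2-1}$, which is precisely where the restriction $p \in (1,\infty)$ is used. I expect to establish this by extracting an a.e.-convergent subsequence and invoking a Vitali-type argument based on the identity $\||v|^{p-1}\|_{p'}^{p'} = \||v|^{p/2}\|_2^2 = \|v\|_p^p$; convergence of the $L^p$-norms of $v_n$ then forces uniform integrability of the image sequences and yields convergence in $L^{p'}$ and $L^2$ respectively. With this in hand, everything else reduces to a routine passage to the limit.
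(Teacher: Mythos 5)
Your proposal is correct and follows essentially the same route as the paper: pick $\lambda$ from the resolvent convergence, set $u_k=(\lambda-A_k)^{-1}(\lambda-A_\infty)u$, use continuity of the maps $v\mapsto v|v|^{p-2}$ into $L^{p'}$ and $v\mapsto v|v|^{\frac p2-1}$ into $L^2$ (which the paper asserts without proof and you rightly justify via a.e.\ convergence plus convergence of norms), and pass to the limit using lower semi-continuity of $\h$. The only cosmetic difference is that the paper keeps the exact identity $(\lambda+A_k)u_k=(\lambda+A_\infty)u$ and moves the $\lambda u_k$ term across rather than invoking $A_ku_k\to A_\infty u$ directly; the two are equivalent.
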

\begin{proof}
Given $u \in L^p$, we denote $v_p(u) := u|u|^{\frac p2 -1}$ and $w_p(u) := u|u|^{p-2}$.

\ref{srs(a)} Let $\lambda \in \C$ be such that $(\lambda+A_k)^{-1} \to (\lambda+A_\infty)^{-1}$
strongly. Let $u\in D(A_\infty)$ and set $u_k=(\lambda + A_k)^{-1}(\lambda+A_\infty)u$
for all $k \in \N$.
By~\eqref{vp-est} we have
\[
\RRe\dual{\lambda u_k}{w_p(u_k)} + \h\bigl(v_p(u_k)\bigr)
 \le \RRe\dual{(\lambda + A_k)u_k}{w_p(u_k)}
 =   \RRe\dual{(\lambda + A_\infty)u}{w_p(u_k)}
\]
for all $k \in \N$. Moreover, $u_k \to u$ in $L^p$, so $v_p(u_k) \to v_p(u)$ in
$L^2$ and $w_p(u_k) \to w_p(u)$ in $L^{p'}$ as $k\to\infty$.
Since $\h$ is lower semi-continuous, we conclude that
$\h(v_p(u)) \le \RRe\dual{A_\infty u}{w_p(u)}$.

\ref{srs(b)} This is proved in a similar way.

\ref{srs(c)} Let $u\in D(A_k)$, and let $(u_m)$ be a sequence from the core such that
$u_m \to u$ in $D(A_k)$. Then $v_p(u_m) \to v_p(u)$ in $L^2$
and $w_p(u_m) \to w_p(u)$ in $L^{p'}$ as $m\to\infty$,
and the assertion follows (use the lower semi-continuity of $\h$ for~\eqref{vp-est}).
\end{proof}

\begin{lemma}\label{extrapol}
Let $q \in [1,\infty)$, and let $T$ be a $C_0$-semigroup on $L^q$ with generator $-A$.
Let $p \in (1,\infty)$ and $\omega\in\R$. 
Assume that
for each $u\in D(A)$ there exists a sequence $(w_n)$ in $L^{p'} \cap L^{q'}$
such that 
$|w_n| \le |u|^{p-1}$, $u\olwn \ge 0$ and
\[
  \RRe\dual{(\omega+A)u}{w_n} \ge 0
\]
for all $n\in\N$, and $|w_n| \to |u|^{p-1}$ a.e.
Then $T$ extrapolates to a quasi-contractive $C_0$-semigroup on $L^p$.
\end{lemma}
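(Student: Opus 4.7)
The plan is to prove the extrapolation via an $L^p$-resolvent bound that is then promoted to a semigroup estimate by the Post--Widder formula. First I would reduce to $\omega = 0$: replacing $A$ with $A + \omega$ and $T(t)$ with $e^{-\omega t} T(t)$ leaves the hypothesis invariant, and quasi-contractivity at rate $\omega$ for $T$ on $L^p$ becomes contractivity for the shifted semigroup. So assume $\omega = 0$.

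The heart of the proof is the bound $\|R_\lambda f\|_p \le \|f\|_p/\lambda$ for $f \in L^p \cap L^q$ and $\lambda$ large enough to lie in $\rho(-A)$, where $R_\lambda := (\lambda + A)^{-1}$. Set $u := R_\lambda f \in D(A)$ and apply the hypothesis to $u$, obtaining a sequence $(w_n)$ in $L^{p'} \cap L^{q'}$. From $(\lambda + A)u = f$ and $\RRe\dual{Au}{w_n} \ge 0$ we get $\lambda \RRe\dual{u}{w_n} \le \RRe\dual{f}{w_n}$; since $u\overline{w_n} \ge 0$ a.e., $\RRe\dual{u}{w_n} = \int |u||w_n| =: I_n$. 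The key algebraic identity $(p-1)(p'-1) = 1$ gives $|w_n|^{p'} = |w_n|^{p'-1}|w_n| \le |u|^{(p-1)(p'-1)}|w_n| = |u||w_n|$, so $\|w_n\|_{p'}^{p'} \le I_n$. H\"older's inequality then yields $\lambda I_n \le \|f\|_p I_n^{1/p'}$, hence $I_n \le (\|f\|_p/\lambda)^p$. Since $|u||w_n| \to |u|^p$ a.e.\ (as $|w_n| \to |u|^{p-1}$), Fatou's lemma gives $\|u\|_p \le \|f\|_p/\lambda$. In particular $R_\lambda(L^p \cap L^q) \subset L^p \cap L^q$, and the bound iterates to $\|R_\lambda^n f\|_p \le \|f\|_p/\lambda^n$ for all $n \in \N$.

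The Post--Widder formula $T(t)f = \lim_n (n/t)^n R_{n/t}^n f$ in $L^q$, combined with a.e.\ convergence along a subsequence and Fatou's lemma in $L^p$, yields $\|T(t)f\|_p \le \|f\|_p$ for every $f \in L^p \cap L^q$. Since $L^p \cap L^q$ is dense in $L^p$, $T(t)|_{L^p \cap L^q}$ extends uniquely to a contraction $T_p(t)$ on $L^p$, and the semigroup property transfers from $T$ by continuity. The main technical obstacle is strong continuity of $T_p$ at $t = 0$. For $f \in L^p \cap L^q$, $T(t)f \to f$ in $L^q$ gives a.e.\ convergence along some subsequence $t_k \to 0^+$; combined with $\|T(t_k)f\|_p \le \|f\|_p$ and Fatou's lemma, this forces $\|T(t_k)f\|_p \to \|f\|_p$, and the Brezis--Lieb lemma (a.e.\ convergence plus norm convergence in $L^p$ implies $L^p$ convergence for $p\in[1,\infty)$) upgrades this to $T(t_k)f \to f$ in $L^p$. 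A subsequence-of-subsequence argument then yields $T_p(t)f \to f$ in $L^p$ as $t \to 0^+$ for every $f \in L^p \cap L^q$; density and the uniform contraction bound extend strong continuity to all of $L^p$. Consistency of $T_p$ with $T$ is built into the construction.
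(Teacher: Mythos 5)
Your proposal is correct and follows essentially the same route as the paper: the resolvent bound $\|(\lambda+A)^{-1}f\|_p\le\lambda^{-1}\|f\|_p$ obtained from $|w_n|^{p'}\le u\olwn$, H\"older and Fatou, then the exponential (Post--Widder) formula, with strong continuity via the Brezis--Lieb lemma (which the paper cites as one of its two options for this step).
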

\begin{proof}
Without loss of generality assume that $\omega=0$.
Let $f \in L^p \cap L^q$ and $0 < \lambda \in \rho(-A)$.
We shall show that $\|(\lambda+A)^{-1}f\|_p \le \frac1\lambda \|f\|_p$.
Set $u = (\lambda+A)^{-1}f$, and let $(w_n)$ be a sequence as in the assumption.
Then $|w_n|^{p'-1} = |w_n|^{1/(p-1)} \le |u|$ for all $n\in\N$, so we obtain
$|w_n|^{p'} \le |uw_n| = u\olwn$ and hence
\[
  \lambda\|w_n\|_{p'}^{p'}
  \le \lambda \int u\olwn\,
  \le \RRe\dual{(\lambda+A)u}{w_n}
  \le \|f\|_p \|w_n\|_{p'}.
\]
It follows that $\bigl\||w_n|^{1/(p-1)}\bigr\|_p = \|w_n\|_{p'}^{p'-1}
\le \frac1\lambda \|f\|_p$. Since $|w_n|^{1/(p-1)} \to |u|$ a.e., Fatou's lemma yields
$\|(\lambda+A)^{-1}f\|_p = \|u\|_p \le  \frac1\lambda \|f\|_p$.
By the exponential formula we conclude that $T$ extrapolates to
a contractive semigroup on $L^p$, which is strongly continuous
by \cite[Proposition~1]{voi92}.
(The strong continuity can also be deduced from \cite[Theorem~1]{BL}.)
\end{proof}

For the proof of Proposition~\ref{appr} we need in addition the next observation.

\begin{observation}\label{lsc}
Let $M$ be a metric space, and for all $n\in\N$ let $f_n \from M \to \R\cup\{\infty\}$
be lower semi-continuous. Assume that $f_n \uparrow f$ pointwise,
and let $(x_n)$ be a convergent sequence in $M$.
Then $f(\lim x_n) \le \liminf f_n(x_n)$.
This holds since for any $m\in\N$ one can estimate
$f_m(\lim x_n) \le \liminf\limits_{n\to\infty} f_m(x_n) \le \liminf\limits_{n\to\infty} f_n(x_n)$.
\end{observation}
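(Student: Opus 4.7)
The statement is a short and essentially standard fact about monotone limits of lower semi-continuous functions, and the hint at the end of the statement already indicates the two-step argument. My plan is to formalize that hint into a clean proof.

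First I would fix the setup: let $x_\infty := \lim_{n\to\infty} x_n$, and note that by hypothesis $f(y) = \sup_{m\in\N} f_m(y)$ for every $y\in M$, with the sequence $(f_m(y))$ non-decreasing in $m$. The goal is therefore equivalent to showing that $f_m(x_\infty) \le \liminf_{n\to\infty} f_n(x_n)$ for every fixed $m\in\N$, since then taking the supremum over $m$ on the left produces $f(x_\infty) \le \liminf_{n\to\infty} f_n(x_n)$.

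Next I would fix $m\in\N$ and carry out the two-step chain. Step one: since $f_m$ is lower semi-continuous and $x_n \to x_\infty$ in $M$, the definition of lower semi-continuity gives
\[
  f_m(x_\infty) \le \liminf_{n\to\infty} f_m(x_n).
\]
Step two: for every $n \ge m$, monotonicity yields $f_m(x_n) \le f_n(x_n)$, so
\[
  \liminf_{n\to\infty} f_m(x_n) \le \liminf_{n\to\infty} f_n(x_n)
\]
(the first $\liminf$ only cares about tails $n\ge m$, on which the inequality holds pointwise). Concatenating the two inequalities gives $f_m(x_\infty) \le \liminf_{n\to\infty} f_n(x_n)$, valid for every $m$, which completes the argument as indicated above.

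There is no real obstacle here; the only point that deserves a word is that all quantities are allowed to take the value $+\infty$, but the inequalities above make sense in $\R\cup\{\infty\}$ under the usual conventions, and neither step requires finiteness of any $f_m(x_n)$. The proof therefore reduces to the two one-line inequalities, and I would present it in essentially the form above.
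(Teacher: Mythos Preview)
Your proof is correct and is precisely the argument indicated in the paper: fix $m$, use lower semi-continuity of $f_m$ for the first inequality and monotonicity $f_m \le f_n$ (for $n\ge m$) for the second, then take the supremum over $m$. The only addition you make is the explicit remark about values in $\R\cup\{\infty\}$, which is harmless.
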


\begin{proof}[Proof of Proposition~\ref{appr}]
First assume that $p>1$ and that $U \ge \mkUhat-c$ for some $c\ge0$.
Then $\form+U$ is a closed sectorial form by Lemma~\ref{lcomplex202}\ref{lcomplex202-2}.
Let $u\in D(\gen_U)$. Then $u \in D{(\form+U)} = D(h_0)$.
Let $n\in\N$, and set $v_n = u\bigl(|u|^{\frac p2 -1}\wedge n\bigr)$,
$w_n = u\bigl(|u|^{p-2}\wedge n^2\bigr)$ and $U_n = \chiind{\ge n}\Uhat$.
Then $|v_n|^2 = u\olwn$ and $\mkU-U_n \ge -c$,
so by Lemma~\ref{accr} and~\eqref{omegap-tilde-def} we obtain
\[
  \RRe\dual{\gen_U u}{w_n}
  = \RRe (\form +U)(u,w_n)
  \ge \tau_p(v_n) + (\mkU-U_n)(v_n)
  \ge -(\tomega_p+c) \|v_n\|_2^2\,.
\]
Thus, by Lemma~\ref{extrapol}, $S_U$ extrapolates to a $C_0$-semigroup $S_\Up$ on~$L^p$.

Let $-\gen_\Up$ denote the generator of $S_\Up$.
Let $u\in D(\gen_U)\cap D(\gen_\Up)$, and let $v_n$, $w_n$ and $U_n$ be as above.
Then $v_n \to u|u|^{\frac{p}{2}-1}\rule{0pt}{2.3ex}$ in $L^2$
and $w_n \to u|u|^{p-2}$ in $L^{p'}$ and hence
\[
  \lim_{n\to\infty} (\form+U)(u,w_n)
  = \dual{\gen_U u}{u|u|^{p-2}}
  = \dual{\gen_\Up u}{u|u|^{p-2}}.
\]
For each $n\in\N$, the functional $\h_p + (\mkU-U_n)$ on~$L^2$ given by
$v \mapsto \h_p(v) + \int (\mkU-\nobreak U_n)|v|^2$ is lower semi-continuous.
Hence it follows from Lemma~\ref{accr} and Observation~\ref{lsc} that
\begin{align*}
\RRe\dual{\gen_\Up u}{u|u|^{p-2}}
 &= \lim_{n\to\infty} \RRe(\form+U)(u,w_n)
 \ge \liminf_{n\to\infty} (\tau_p+\mkU-U_n)(v_n)  \\
 &\ge \liminf_{n\to\infty} (\h_p+\mkU-U_n)(v_n)
  \ge (\h_p+U)(u|u|^{\frac{p}{2}-1}),
\end{align*}
where $\h_p+U$ is considered as a functional on $L^2$.
The set $D(\gen_U)\cap D(\gen_\Up)$ is a core for $\gen_\Up$,
so by Lemma~\ref{srs}\ref{srs(c)} and~\eqref{omegap-tilde-def-used} we conclude that
\begin{equation}\label{hp+U}
  \RRe\dual{\gen_\Up u}{u|u|^{p-2}}
  \ge (\h_p+U)(u|u|^{\frac{p}{2}-1})
  \ge -\tomega_p\|u\|_p^p
  \avoidbreak
\end{equation}
for all $u \in D(\gen_\Up)$.
Thus~\eqref{lconc} holds (even for all $p\in J\setminus\{1\}$),
and by the Lumer--Phillips theorem it follows that
$\|S_\Up(t)\|_{p\to p}\le e^{\tomega_pt}$ for all $t\ge0$.

In the case $p=1$ we can argue as in Remark~\ref{main2-rem}\ref{main2-rem(b)}:
we have $\lim_{p\to1}\tomega_p \le \tomega_1$, so by the above we obtain a semigroup
$S_{U,1}$ on $L^1$ with $\|S_{U,1}(t)\|_{1\to 1}\le e^{\tomega_1t}$ for all $t\ge0$.
Moreover, $S_{U,1}$ is strongly continuous by \cite[Proposition~4]{voi92}.

Now assume that $p\in\sinter J$.
Let $u\in D(\gen_U)\cap D(\gen_\Up)$, and let again $v_n$, $w_n$ and $U_n$ be as above.
Since $u\olwn$ is real, we obtain
\[
  \IIm\dual{\gen_\Up u}{w_n}
  = \IIm(\form+U)(u,w_n) = \IIm\form(u,w_n)
\]
for all $n \in \N$. Set $M = \rfrac{c_0}{\mu_p}$ and $\omega = \mu_p + \omega_p$.
Using \eqref{im-est}, \eqref{mu-omega}, \eqref{form-bound-below} and $U\ge0$, we estimate
\begin{align*}
\mathopen|\IIm\form(u,w_n)|
 &\le c_0(h_0+1)(v_n)
  \le M(\tau_p+\omega)(v_n) \\[0.5ex]
 &\le M(\RRe\form+\mkU+\omega)(u,w_n) + MU_n(v_n).
\end{align*}
Next, $U_n|v_n|^2 \le \Uhat|u|^p
\le (\mkU+c) \bigl| u|u|^{\frac{p}{2}-1} \bigr|\rule{0pt}{1.7ex}^2 \in L^1$
by~\eqref{hp+U} and $U_n|v_n|^2 \to 0$ a.e.
Therefore $U_n(v_n) \to 0$ by the dominated convergence theorem, and we infer that
\[
  \mathopen|\IIm \dual{\gen_\Up u}{u|u|^{p-2}}|
  = \lim_{n\to\infty} \mathopen|\IIm\form(u,w_n)|
  \le M\RRe\dual{(\gen_\Up+\omega) u}{u|u|^{p-2}}.
\]
By Lemma~\ref{srs}\ref{srs(c)}, this estimate carries over to all $u\in D(\gen_\Up)$,
i.e., \eqref{lanal} holds.

So far we have proved the proposition in the case where
$U\ge\mkUhat-c$ for some $c\ge0$.
In this last step we show the assertions for an arbitrary $\mkU\in\U$.
Let $k\in\N$ and set $\Utilde_k = \mkU+(\mkUhat-k)^+$.
Then $\Utilde_k \ge \mkUhat-k$,
and $\Utilde_k \in \U$ since $\Utilde_k$ is $h_0$-form-bounded and
$(\form+U)+(\mkUhat-k)^+$ is closable as a sum of two closable forms.
Thus, as shown above, the assertions hold for $\Utilde_k$ in place of~$\mkU$.

Note that $(\form+\Utilde_k)(v) \to (\form+U)(v)$ for all $v\in D(h_0)$
due to the dominated convergence theorem; then
by \cite[Theo\-rem~VIII.3.6]{kato80} we see that $S_{\smash[t]{\Utilde_k}}\to S_U$
as $k\to\infty$ in the semigroup sense on~$L^2$.
Now let $p \in J$. Using Fatou's lemma, we obtain
$\|S_U(t)\|_{p\to p} \le e^{\tomega_p t}$ for all $t \ge 0$.
Then by \cite{voi92} one deduces that $S_U$
extrapolates to a $C_0$-semigroup $S_\Up$ on~$L^p$.
If $p\in\sinter J$ then it follows by interpolation that
$S_{\smash[t]{\Utilde_k}\shift,p} \to S_\Up$ as $k\to\infty$ in the semigroup sense on $L^p$.
Applying Lemma~\ref{srs} we thus conclude that \eqref{lconc} and~\eqref{lanal} hold.
\end{proof}

With the same argument as in the last paragraph of the above proof
one also obtains the following result.

\begin{lemma}\label{lp-conv}
Suppose that Assumptions~\ref{assump1}~--~\ref{assump8} are satisfied,
and let $p\in\sinter{J}$.
Let $U,\Utilde,U_1,U_2,\ldots \in \U$ satisfy $\mkU \le U_k \le \Utilde$ for all $k\in\N$
and $U_k\to U$ a.e.\ as $k\to\infty$.
Then $S_{U_k\shift,p} \to S_\Up$ as $k\to\infty$ in the semigroup sense on $L^p$.
\end{lemma}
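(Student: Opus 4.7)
The plan is to follow the template of the last paragraph of the proof of Proposition~\ref{appr}, in four stages: (i) pointwise convergence of the forms on $D(h_0)$, (ii) convergence of the associated $L^2$-semigroups, (iii) extrapolation to $L^q$ for $q \in J$ via Proposition~\ref{appr}, and (iv) interpolation to $L^p$.

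For (i), fix $v \in D(h_0)$. Since $\Utilde \in \U$ is $h_0$-form-bounded, $|v|^2 \Utilde$ lies in $L^1$; the sandwich $0 \le U_k \le \Utilde$ together with the a.e.\ convergence $U_k \to U$ then lets the dominated convergence theorem deliver $(\form + U_k)(v) \to (\form + U)(v)$. Step (ii) follows by \cite[Theorem~VIII.3.6]{kato80}, exactly as in the cited paragraph, yielding $S_{U_k} \to S_U$ in the semigroup sense on $L^2$. Step (iii) is immediate from Proposition~\ref{appr}: each of $S_{U_k}$ and $S_U$ extrapolates to $L^q$ for every $q \in J$ with the common bound $e^{\tomega_q t}$.

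For (iv), I may assume $p \ne 2$ (otherwise (ii) already furnishes convergence in $L^p = L^2$). Since $p \in \sinter{J}$, I choose $q \in J$ on the far side of $p$ from $2$, so that $p$ lies strictly between $2$ and $q$. For $f$ in the dense subspace $L^2 \cap L^q$ of $L^p$, consistency of the extrapolations makes $g_k(t) := S_{U_k}(t)f - S_U(t)f$ a single function in $L^2 \cap L^p \cap L^q$, and H\"older's inequality gives
\[
  \|g_k(t)\|_p \le \|g_k(t)\|_2^\theta \|g_k(t)\|_q^{1-\theta}
\]
for the appropriate $\theta \in (0,1)$. Step (ii) forces the $L^2$-factor to vanish uniformly on $[0,T]$, while step (iii) bounds the $L^q$-factor uniformly in $k$ and $t$; hence $\sup_{t \in [0,T]} \|g_k(t)\|_p \to 0$, and the uniform $L^p$-operator bound combined with a density argument lifts this to every $f \in L^p$.

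The delicate point is step (ii): the forms $\form + U_k$ need only be closable (not closed), so Kato's theorem has to be invoked at the level of their closures, using the bounds $U \le U_k \le \Utilde$ and $U, \Utilde \in \U$ to secure the uniform sectoriality and common form-boundedness relative to $h_0$ on which the convergence result depends.
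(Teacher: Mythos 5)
Your proposal is correct and follows essentially the same route as the paper, whose proof of this lemma is simply the remark that one repeats the last paragraph of the proof of Proposition~\ref{appr}: dominated convergence of the forms on $D(h_0)$ (using $\Utilde|v|^2\in L^1$), Kato's Theorem~VIII.3.6 for the $L^2$-convergence, the uniform bounds $\|S_{U_k\shift,q}(t)\|_{q\to q}\le e^{\tomega_q t}$ from Proposition~\ref{appr}, and interpolation. Your step (iv) merely spells out the interpolation the paper leaves implicit, and your choice of $q$ on the far side of $p$ from $2$ correctly avoids any need for $2\in J$.
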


We end the section by commenting on the reference form $h_0$.

\begin{remark}\label{reference-form}
Recall from Assumption~\ref{assump2} that $h_0 \subset h_{\max}$ is a Dirichlet form.
We point out that in \cite{sv02},
where the case of real-valued coefficients is studied,
the main reference object is a Dirichlet form $a_0\subset \amax$ rather than $h_0$,
and it is assumed that $D(a_0) \cap \Q(V^+)$ is a core for~$a_0$ (recall $V=\RRe Q$).
Under that assumption one can choose $h_0 := a_0 + V^+$, and then
$h_0$ satisfies Assumption~\ref{assump2}.

Conversely, if in the setting of the current paper one defines
$a_0=\overline{\amax\restrict{D(h_0)}}$,
then $a_0$ is a Dirichlet form, $a_0 \subset \amax$, and
$D(a_0) \cap \Q(V^+)$ is a core for~$a_0$. Moreover,
$a_0 + V^+ = h_0$.
Indeed, the inclusion $a_0 + V^+ \supset h_0$ is clear, so one
only has to show that $u \in D(a_0 + V^+)$ implies $u \in D(h_0)$.
Without loss of generality assume that $u\ge0$. Let $(u_n)$ be a
sequence in $D(h_0)$ such that $u_n \to u$ in $D(a_0)$.
Then $D(h_0) \ni v_n := (\RRe u_n)^+\wedge u \to u$ in $D(a_0)$;
cf.\ the proof of Lemma~\ref{dense} and \cite[Proof of Lemma~3.13]{sv02}.
By the dominated convergence theorem
it follows that $v_n \to u$ in $D(h_{\max})$, so $u \in D(h_0)$.
\end{remark}

\section{Generation of quasi-contractive semigroups}\label{Scomplex2}

Throughout this section let $S_\Up$ be the $C_0$-semigroup on $L^p$
constructed in Proposition~\ref{appr}, for given $p\in J$ and $\mkU\in\U$,
and let $-\gen_\Up$ be the generator of~$S_\Up$.
At the end of the section we prove Theorem~\ref{mainresult2},
in which we eliminate the absorption potential $U$ via strong resolvent convergence.
In Theorem~\ref{illu} we give the proof of Theorem~\ref{mainresult2}
in a special case. 
We then need a modification of that proof to deduce the general case;
this involves resolvents twisted with suitable multiplication operators.

One of the key points that make the elimination of $U$ work is the following observation.

\begin{lemma}\label{Up-est}
Let $(X,\mu)$ be a measure space, let $p\in (1,\infty)$
and let $A$ be an $m$-accretive operator in $L^p(\mu)$.
Let $U\colon X\to [0,\infty)$ be measurable.
Suppose that $v_p(u) := u|u|^{\frac{p}{2}-1}\in \Q(U)$ and
\[
  U(v_p(u))\le \RRe \dual{Au}{w_p(u)}
\]
for all $u\in D(A)$, where $w_p(u) = u|u|^{p-2}$. Then
\[
  \|U^\frac1p (\lambda+A)^{-1}\|_{p\to p} \le \lambda^{-\frac1{p'}}
  \smallbds
\]
for all $\lambda>0$.
\end{lemma}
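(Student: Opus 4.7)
The plan is to test the hypothesis against the resolvent $u := (\lambda+A)^{-1}f$, exploiting the fortunate identity $\int U|u|^p = U(v_p(u))$ since $|v_p(u)|^2 = |u|^p$. This converts the desired mixed-norm estimate on the resolvent into a weighted form inequality, which the hypothesis directly controls by $\RRe\dual{Au}{w_p(u)}$.

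First I would pick $f \in L^p(\mu)$ and set $u = (\lambda+A)^{-1}f$, so that $u \in D(A)$ and $\lambda u + Au = f$. Since by assumption $v_p(u) \in \Q(U)$, the integral $\int U|u|^p$ is finite and equals $U(v_p(u))$. Applying the hypothesis and substituting $Au = f - \lambda u$ gives
\[
  \int U|u|^p \le \RRe\dual{Au}{w_p(u)}
  = \RRe\dual{f}{u|u|^{p-2}} - \lambda\|u\|_p^p.
\]

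Next I would discard the non-positive term $-\lambda\|u\|_p^p$ and apply Hölder's inequality to the first term, using $\|u|u|^{p-2}\|_{p'} = \|u\|_p^{p-1}$, to obtain
\[
  \int U|u|^p \le \|f\|_p\,\|u\|_p^{p-1}.
\]
At this point the $m$-accretivity of $A$ comes in: it gives $\lambda\|u\|_p \le \|f\|_p$, so $\|u\|_p^{p-1} \le \lambda^{-(p-1)}\|f\|_p^{p-1}$. Combining, $\int U|u|^p \le \lambda^{-(p-1)}\|f\|_p^p$, and taking $p$-th roots yields
\[
  \|U^{1/p}(\lambda+A)^{-1}f\|_p \le \lambda^{-(p-1)/p}\|f\|_p = \lambda^{-1/p'}\|f\|_p,
\]
which is the required bound.

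There is no real obstacle here; the argument is essentially a one-line manipulation once the hypothesis is inserted. The only place requiring a pinch of care is the very first step, namely observing that the hypothesis is exactly what one needs to rewrite $\int U|u|^p$ as an upper bound involving $\dual{Au}{w_p(u)}$, because this makes the telescoping with $\lambda u = f - Au$ do all the work.
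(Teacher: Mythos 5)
Your argument is correct and is essentially identical to the paper's proof: test the hypothesis on $u=(\lambda+A)^{-1}f$, use the identity $\|U^{1/p}u\|_p^p = U(v_p(u))$, substitute $Au=f-\lambda u$, drop $-\lambda\|u\|_p^p$, apply H\"older, and invoke the resolvent bound $\|u\|_p\le\lambda^{-1}\|f\|_p$ from $m$-accretivity. Nothing to add.
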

\begin{proof}
Let $f\in L^p(\mu)$ and set $u=(\lambda+A)^{-1}f$. Then
$\|u\|_p \le \lambda^{-1}\|f\|_p$ and
\begin{align*}
\|U^\frac1p u\|_p^p
&  = U(v_p(u))
 \le \RRe \dual{A u}{w_p(u)}
   = \RRe \dual{f}{w_p(u)}-\lambda \|u\|_p^p \\[0.5ex]
&\le \|f\|_p\|w_p(u)\|_{p'}
   = \|f\|_p\|u\|_p^{p-1}
 \le \lambda^{-({p-1})}\|f\|^p_p\,,
\end{align*}
which implies the assertion.
\end{proof}

Lemma~\ref{Up-est} will be used via the following result.

\begin{proposition}\label{strong-bound}
Let Assumptions~\ref{assump1}~--~\ref{assump8} be satisfied.
Let $\mkU\in\U$ and $p\in \sinter{J}$.
Let $U'\from\Omega\to[0,\infty)$ be measurable, let $c > 0$ and $\omega\in\R$, and
suppose that $U'(v) \le c\bigl(\tau_p(v)+\omega\|v\|_2^2\bigr)$ for all $v\in D(h_0)$.
Then
\begin{align*}
\bigl\|(U')^\frac1p(\lambda + \gen_\Up)^{-1}\bigr\|_{p\to p}
 &\le c^{\frac1p} (\lambda - \omega)^{-\frac1{p'}}, \\
\bigl\|(U')^\frac1{p'}(\lambda + \gen_\Up^*)^{-1}\bigr\|_{p'\to p'}
 &\le c^{\frac1{p'}} (\lambda - \omega)^{-\frac1p} \\[-1.3\baselineskip]
\end{align*}
for all $\lambda > \omega$.
\end{proposition}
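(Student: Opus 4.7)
The plan is to reduce both estimates to Lemma~\ref{Up-est}, applied respectively to the shifted generator $\gen_\Up+\omega$ on $L^p$ and, for the second bound, to the analogous shift of the adjoint operator on $L^{p'}$. Two preparatory observations are used. First, since $U'\ge 0$ the hypothesis forces $\tau_p(v)\ge-\omega\|v\|_2^2$ on $D(h_0)$, i.e.\ $\omega\ge\tomega_p$ by definition~\eqref{omegap-tilde-def}; together with the growth bound $\|S_\Up(t)\|_{p\to p}\le e^{\tomega_p t}$ from Proposition~\ref{appr} this ensures that $\gen_\Up+\omega$ is $m$-accretive on $L^p$. Secondly, the pointwise hypothesis is lifted from $D(h_0)$ to all of $L^2$ by passing to the lower semi-continuous hull: the functional $v\mapsto \int U'|v|^2$ is lsc on $L^2$ by Fatou, hence $v\mapsto c^{-1}U'(v)-\omega\|v\|_2^2$ is lsc and dominated by $\tau_p$ on $D(h_0)$; by the very definition of $\h_p$ as the lsc hull of $\tau_p$ one obtains $U'(v)\le c(\h_p(v)+\omega\|v\|_2^2)$ for every $v\in L^2$.

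Combining this lifted inequality with the accretivity bound~\eqref{lconc} yields, for each $u\in D(\gen_\Up)$,
\[
U'(v_p(u))\le c\bigl(\h_p(v_p(u))+\omega\|u\|_p^p\bigr)\le c\,\RRe\dual{(\gen_\Up+\omega)u}{u|u|^{p-2}},
\]
where $v_p(u):=u|u|^{p/2-1}$ and $\|v_p(u)\|_2^2=\|u\|_p^p=\RRe\dual{u}{u|u|^{p-2}}$. Lemma~\ref{Up-est} applied to $A=\gen_\Up+\omega$ and potential $c^{-1}U'$ then delivers $\|(c^{-1}U')^{1/p}(\mu+A)^{-1}\|_{p\to p}\le\mu^{-1/p'}$ for all $\mu>0$; substituting $\mu=\lambda-\omega$ proves the first inequality.

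The second inequality is obtained by repeating the same scheme on $L^{p'}$ with the adjoint $\gen_\Up^*$. This operator coincides with the extrapolation to $L^{p'}$ of the $L^2$-operator associated with the adjoint form $\form^*+U$; applying Proposition~\ref{appr} to $\form^*+U$ at index $p'$ furnishes the analogue of~\eqref{lconc} for $\gen_\Up^*$, and Lemma~\ref{Up-est} with exponent $p'$ then produces the claimed bound. The step enabling the adjoint argument is the invariance of the $\tau$-functional under $\form\to\form^*$: the substitutions $\cA_1^s\mapsto-\cA_1^s$ and $b_1\leftrightarrow\overline{b_2}$ only enter the definition of $\tau_q$ through absolute values or through $\RRe(b_1+b_2)$, so the lsc-hull lifting carries over verbatim in the adjoint framework. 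The main technical obstacle throughout is precisely this semi-continuity step, since $v_p(u)$ and $v_{p'}(u)$ need not a priori lie in $D(h_0)$ and the pointwise bound on $U'$ must be upgraded to its natural extension on $L^2$ before it can be paired with the form-accretivity estimate.
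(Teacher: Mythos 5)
Your proposal is correct and takes essentially the same route as the paper's proof: lift the hypothesis to the lower semi-continuous hull $\h_p$, pair it with the accretivity estimate~\eqref{lconc} from Proposition~\ref{appr}, apply Lemma~\ref{Up-est} to $\gen_\Up+\omega$, and obtain the dual bound from the identity $\tau^*_{p'}=\tau_p$ for the adjoint form. The additional details you supply (the $m$-accretivity of $\gen_\Up+\omega$ via $\omega\ge\tomega_p$, and the verification that the sign changes $\cA_1^\s\mapsto-\cA_1^\s$, $b_1\mapsto-\overline{b_2}$, $b_2\mapsto-\overline{b_1}$ leave $\tau$ invariant after swapping $p$ and $p'$) only make explicit what the paper leaves implicit.
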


\begin{proof}
It follows from the assumptions and the lower semicontinuity of the functional
$L^2 \ni v \mapsto \int U'|v|^2$
that $U'(v) \le c\bigl(\h_p(v)+\omega\|v\|_2^2\bigr)$ for all $v\in D(h_0)$.
Let $u\in D(\gen_\Up)$. By Proposition~\ref{appr} we obtain
$v := u|u|^{\frac p2 -1} \in D(h_0)$ and
\[
  \rfrac1c U'(v) \le \h_p(v)+\omega\|v\|_2^2
                 \le \RRe\dual{(\omega + \gen_\Up)u}{u|u|^{p-2}}.
  \avoidbreak
\]
Then the first assertion follows from Lemma~\ref{Up-est}.

To prove the second assertion, note that the semigroup $(e^{-t\gen_U^*})_{t\ge0}$
adjoint to $(e^{-t\gen_U})_{t\ge0}$ extrapolates to the
$C_0$-semigroup $(e^{-t\gen_\Up^*})_{t\ge0}$ on $L^{p'}$.
Moreover, $\gen_U^*$ is associated with the closure of $\form^*+U$,
where $\form^*$ denotes the adjoint form defined by
$\form^*(u,v) = \overline{\form(v,u)}$ on $D(\form^*) = D(\form)$.
Finally, $\form^*$ has the same structure as the form~$\form$:
\[
  \form^*(u,v)
  = \dual {\cA^* \nabla u}{\nabla v}
   + \dual{(-\overline{b_2}) \cdot \nabla u}{v}
   - \dual{(-\overline{b_1})\+u}{\nabla v}
   + \dual{\overline{Q}\+u}{v},
\]
from which one easily deduces that $\tau^*_\pstrich = \tau_p$,
where $\tau^*_\pstrich$ denotes the functional corresponding to $\form^*$ and $p'$;
cf.~\eqref{tau_p}.
Thus the second assertion follows from the same argument as the first one.
\end{proof}

Making use of Proposition~\ref{strong-bound},
we now give the proof of Theorem~\ref{mainresult2} in the special case where the
absorption potentials belong to $\U\cap L^1$.
(Observe that $\U \subset L^1$ if $\ind_\Omega\in D(h_0)$, which holds, e.g.,
if $\Omega$ is bounded and $D(h_0) = W^{1,2}$.)

\begin{theorem}\label{illu}
Let Assumptions \ref{assump1}~--~\ref{assump8} be satisfied, and let $p\in\sinter{J}$.
Let $U,U_1,U_2,\ldots\in\U \cap L^1$ be such that $U_m\le U$
for all $m \in \N$ and $U_m\to 0$ a.e.
Then the sequence $(S_{U_m\shift,p})_{m\in\N}$ converges in the semigroup sense.
\end{theorem}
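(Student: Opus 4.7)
The plan is to apply a Trotter--Kato-type argument: convergence of $(S_{U_m,p})$ in the semigroup sense reduces, using the uniform bound $\|S_{U_m,p}(t)\|_{p\to p}\le e^{\tomega_pt}$ from Proposition~\ref{appr}, to strong convergence of the resolvents $R_m := (\lambda+\gen_{U_m,p})^{-1}$ for a single large $\lambda>\tomega_p$. Since $\|R_m\|_{p\to p}\le (\lambda-\tomega_p)^{-1}$ uniformly in $m$, it is enough to establish Cauchyness of $(R_m f)$ for $f$ in the dense subspace $L^p\cap L^2$, after which an $\varepsilon/3$-argument and density extend the result to all of $L^p$.

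The natural Cauchy estimate comes from the resolvent identity $R_m - R_k = R_m(U_k - U_m) R_k$. Testing against $g\in L^{p'}\cap L^2$ of unit norm, splitting $|U_k - U_m| = |U_k-U_m|^{1/p}|U_k-U_m|^{1/p'}$ and applying H\"older yields
\[
 |\<(R_m - R_k)f, g\>|
 \le \Bigl(\int |U_k - U_m|\,|R_k f|^p\Bigr)^{1/p}\Bigl(\int |U_k - U_m|\,|R_m^* g|^{p'}\Bigr)^{1/p'}.
\]
Proposition~\ref{strong-bound} applied with $U' = U\in\U$ (whose form-bound against $\tau_p$ follows from Lemma~\ref{lcomplex204} together with $U\le C(h_0+1)$) gives uniform control $\|U^{1/p}R_k\|_{p\to p}\le C$, and by $\tau_{p'}^*=\tau_p$ the dual version $\|U^{1/p'}R_m^*\|_{p'\to p'}\le C'$. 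Combined with $|U_k - U_m|\le 2U$, the second factor above is uniformly bounded by a constant multiple of $\|g\|_{p'}$, and the task reduces to
\[
 \int |U_k - U_m|\,|R_k f|^p \to 0 \qquad\text{as } m,k\to\infty.
\]

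Here the assumption $U_m\in L^1$ enters decisively: since $U_m\le U\in L^1$ and $U_m\to 0$ a.e., dominated convergence gives $\|U_k - U_m\|_1\to 0$. Combined with the uniform bound $\int U|R_k f|^p \le C^p\|f\|_p^p$, I would truncate at level $M>0$: on $\{|R_k f|\le M\}$ the integrand is pointwise bounded by $M^p|U_k-U_m|$, giving an $M^p\|U_k-U_m\|_1$ bound that vanishes as $m,k\to\infty$; on the complement, one uses $\int_{\{|R_kf|>M\}}|U_k-U_m|\,|R_kf|^p\le 2\int_{\{|R_kf|>M\}}U|R_kf|^p$ and tries to push this to $0$ as $M\to\infty$ uniformly in $k$. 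The main obstacle is precisely this last point: it demands \emph{uniform equi-integrability} of the family $\{U|R_kf|^p\}_k$ in $L^1$, which is strictly stronger than uniform $L^1$-boundedness. I expect to resolve it either by density (restricting to $f\in L^p\cap L^\infty$ and invoking an $L^\infty$-pointwise control of $R_kf$ through Remark~\ref{mainresult-rem}\ref{mainresult-rem(f)}-type arguments when applicable, so that $U|R_kf|^p$ is pointwise dominated by a fixed $L^1$ function), or, more robustly, by iterating the resolvent identity once more to convert the $L^1$-smallness of $U_k-U_m$ into $L^1$-smallness of the full integrand without the intermediate equi-integrability hurdle. Once Cauchyness is established on a dense set, Trotter--Kato delivers the required convergence in the semigroup sense on $L^p$.
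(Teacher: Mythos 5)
Your overall strategy --- Trotter--Kato via the second resolvent identity, with Proposition~\ref{strong-bound} supplying the uniform bound on $\|U^{1/p}(\lambda+\gen_{U_k\shift,p})^{-1}\|_{p\to p}$ and its dual --- is the same as the paper's, but the argument stalls at exactly the point you flag, and neither of your proposed repairs closes the gap. The $L^\infty$ route fails because uniform $L^\infty$-control of the resolvents is available only under the special hypotheses of Remark~\ref{mainresult-rem}\ref{mainresult-rem(f)}; in general the admissible range of exponents is restricted, which is the whole point of the paper. And ``iterating the resolvent identity once more'' merely reproduces the same difficulty: each iteration again yields a factor of the form $\int|U_k-U_m|\,|R\,\cdot\,|^p$ for which only uniform $L^1$-boundedness, not equi-integrability, is known.

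The paper's resolution uses the openness of $\sinter{J}$ in an essential way: choose $q\in\sinter{J}$ with $q>p$, so that Proposition~\ref{strong-bound} also gives a uniform bound on $\|U^{1/q}(\lambda+\gen_{U_m\shift,p})^{-1}\|_{q\to q}$ in $L^q$. After the dual estimate reduces matters to $\bigl\|U^{-1/p'}(U_k-U_m)R_m(\lambda)f\bigr\|_p$, one applies H\"older with the pair $q$ and $\frac{pq}{q-p}$:
\[
  \bigl\|U^{-\frac1{p'}}(U_k-U_m)R_m(\lambda)f\bigr\|_p
  \le \|U^{\frac1q}R_m(\lambda)f\|_q\,
      \Bigl\|\bigl(\tfrac{|U_k-U_m|}{U}\bigr)^{\frac{pq}{q-p}}U\Bigr\|_1^{\frac1p-\frac1q}.
\]
The first factor is uniformly bounded for $f\in L^p\cap L^q$, while the second is dominated by $2^{pq/(q-p)}U\in L^1$ and tends to $0$ by dominated convergence, since $\frac{U_k+U_m}{U}\le2$ and $\to0$ a.e. The strictly positive gain $\frac1p-\frac1q$ in the exponent is what converts the a.e.\ convergence $U_m/U\to0$ together with $U\in L^1$ into norm smallness, with no equi-integrability required; this is the idea missing from your proposal. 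A secondary point you skip over: the identity $R_m-R_k=R_k(U_k-U_m)R_m$ needs justification when $U_k-U_m$ is unbounded. The paper first replaces $U_k,U_m$ by $U_{k,n}=U_k+(U_m-n)^+$ and $U_{m,n}=U_m+(U_k-n)^+$, so that $U_{k,n}-U_{m,n}=U_k\wedge n-U_m\wedge n$ is bounded, and then lets $n\to\infty$ using Lemma~\ref{lp-conv}.
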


\begin{proof}
By Proposition~\ref{appr} we know that $\|S_{U_m\shift,p}(t)\|_{p\to p}\le e^{\tomega_pt}$
for all $m\in\N$ and $t\ge0$.
We shall show that there exists an $\omega>0$ such that
for every $f\in L^1\cap L^\infty$,
the sequence $\bigl(\lambda(\lambda + \gen_{U_m\shift,p})^{-1}f\bigr)_{m\in\N}$
is convergent in $L^p$, uniformly for $\lambda \ge \nobreak 2\omega$.
Then it follows that the assumptions of the Trotter--Kato--Neveu
theorem~\cite[Theorem~IX.2.17]{kato80} are satisfied,
which yields the asserted semigroup convergence of $(S_{U_m\shift,p})_{m\in\N}$.

\pagebreak[1]

Without loss of generality we assume that $U>0$ a.e.
Let $q\in\sinter{J}$ with $q>p$. Since $\mkU\in\U$,
it follows from Lemma~\ref{lcomplex204} that there exist $c>0$ and $\omega\ge1$
such that $U(v) \le c\bigl(\tau_r(v)+\omega\|v\|_2^2\bigr)$ for all $v\in D(h_0)$
and $r\in\{p,q\}$.

In the next steps we fix $k,m\in\N$. Note that
\[
  U_{k,n} := U_k + (U_m-n)^+ \in \U, \quad
  U_{m,n} := U_m + (U_k-n)^+ \in \U
\]
for all $n\in\N$.
By Lemma~\ref{lp-conv} we see that
\[
  R_{j,n}(\lambda)
  := (\lambda + \gen_{U_{j,n}\shift,p})^{-1}
  \to (\lambda + \gen_{U_j\shift,p})^{-1}
  \avoidbreak
\]
strongly as $n\to\infty$, for all $\lambda>\omega$ and $j\in\{k,m\}$.

Let $\lambda > \omega$ and $n\in\N$.
Observe that $U_{k,n} - U_{m,n} = U_k\wedge n - U_m\wedge n$ is bounded.
Thus, $\gen_{U_{k,n}} = \gen_{U_{m,n}} + (U_{k,n} - U_{m,n})$ and hence
$\gen_{U_{k,n}\shift,p} = \gen_{U_{m,n}\shift,p} + (U_{k,n} - U_{m,n})$,
so the second resolvent equation gives
\[
  R_{m,n}(\lambda) - R_{k,n}(\lambda)
  = R_{k,n}(\lambda) (U_k\wedge n - U_m\wedge n) R_{m,n}(\lambda).
\]
Since $|U_k\wedge n - U_m\wedge n| \le |U_k-U_m|$,
it now follows from the second estimate in Proposition~\ref{strong-bound}
(applied to the adjoint operator from $L^p$ to $L^p$) that
\[
  \bigl\| R_{m,n}(\lambda)f - R_{k,n}(\lambda)f \bigr\|_p
  \le c^{\frac1{p'}} (\lambda - \omega)^{-\frac1p}
      \bigl\| U^{-\frac1{p'}} (U_k-U_m) R_{m,n}(\lambda)f\bigr\|_p\,.
\]
By H\"older's inequality and the first estimate in Proposition~\ref{strong-bound} we obtain
\begin{equation}\label{decomp}
\begin{split}
\bigl\| U^{-\frac1{p'}} (U_k-U_m) R_{m,n}(\lambda)f\bigr\|_p
 &\le \|U^{\frac1q} R_{m,n}(\lambda)f\|_q
      \|(U_k-U_m)U^{-\frac1{p'}-\frac1q}\|_{\frac{pq}{q-p}} \\
 &\le c^\frac1q (\lambda - \omega)^{-\frac1{q'}}  \|f\|_q
      \bigl\|\bigl(\tfrac{|U_k-U_m|}{U}\bigr)^{\frac{pq}{q-p}}U\bigr\|_1^{\frac1p-\frac1q},
\end{split}
\end{equation}
where we have used
$|U_k-U_m|U^{-\frac1{p'}-\frac1q} = \smash{\tfrac{|U_k-U_m|}{U}} U^\frac{q-p}{pq}$
in the second inequality.
With $C := c^{\frac1{p'}+\frac1q}$ we arrive at
\[
  \bigl\|R_{m,n}(\lambda)f - R_{k,n}(\lambda)f\bigr\|_p
  \le C(\lambda-\omega)^{-\frac1{q'}-\frac1p} \|f\|_q
      \bigl\|\bigl(\tfrac{|U_k-U_m|}{U}\bigr)^{\frac{pq}{q-p}}U\bigr\|_1^{\frac1p-\frac1q}.
\]

Now let $\lambda \ge 2\omega$, so that $\frac{\lambda}{\lambda-\omega} \le 2$.
Then we infer, letting $n\to\infty$, that
\[
  \lambda \bigl\|(\lambda + \gen_{U_m\shift,p})^{-1}f - (\lambda + \gen_{U_k\shift,p})^{-1}f\bigr\|_p
  \le 2C \omega^{\frac1q-\frac1p} \|f\|_q
  \bigl\|\bigl(\tfrac{U_k+U_m}{U}\bigr)^{\frac{pq}{q-p}}U\bigr\|_1^{\frac1p-\frac1q}.
\]
Since $\mkU\in L^1$, $\frac{U_m}{U_{}}\le 1$ and $\frac{U_m}{U}\to0$
a.e.\ as $m\to \infty$, we conclude that the sequence
$\bigl(\lambda(\lambda + \gen_{U_m\shift,p})^{-1}f\bigr)_{m\in\N}$
is convergent in $L^p$, uniformly for $\lambda \ge 2\omega$.
\end{proof}

\begin{remark}
In the case of uniform convergence $U_k/U\to0$ 
one can use~\eqref{decomp} with $p=q$ to obtain norm resolvent convergence.
\end{remark}

In general $\U\nsubseteq L^1$, and it is not even clear whether
$\U \cap L^1 \ne \varnothing$. However, since every $\mkU\in\U$ is form
bounded with respect to $h_0$, one has $\mkU\rho^2\in L^1$ for all
$\rho\in D(h_0)$. This is the basic observation for adapting
the technique of Theorem~\ref{illu} to the general case.
Let $\rho \in D(h_0)$ satisfy $\rho>0$ a.e., and set $\eps = 2\rfrac{q-p}{pq}$.
Then instead of using~\eqref{decomp} we will work with the inequality
\begin{equation}\label{decompeps}
\begin{split}
\MoveEqLeft[4]
\bigl\| U^{-\frac1{p'}} (U_k-U_m) R_{m,n}(\lambda)f \bigr\|_p \\
 &\le \bigl\| U^{\frac1q}\rho^{-\eps}R_{m,n}(\lambda)f \bigr\|_q
      \bigl\| (U_k-U_m) U^{-\frac1{p'}-\frac1q} \rho^\eps \bigr\|_{\frac{pq}{q-p}} \\
 &\le \bigl\| U^{\frac1q}\rho^{-\eps}R_{m,n}(\lambda)\rho^\eps \bigr\|_{q\to q}
      \|\rho^{-\eps}f\|_q
      \bigl\| \bigl(\tfrac{|U_k-U_m|}{U}\bigr)^{\frac{pq}{q-p}} \+ U\rho^2 \bigr\|_1^{\frac1p-\frac1q},
\end{split}
\end{equation}
where $\rho^\eps$ and $\rho^{-\eps}$ are understood as multiplication operators,
and we assume that $\rho^{-\eps} f \in L^q$.
This idea motivates us to study the twisted resolvent
$\rho^{-\eps}(\lambda + \gen_U)^{-1}\rho^\eps$ and the corresponding sesquilinear form,
for given $\mkU\in\U$.

As a first preparation we investigate under which conditions $\rho^\eps$ 
is a bounded multiplication operator on the form domain.

\begin{lemma}\label{multop}
Let $(X,\mu)$ be a measure space, let $\h$ be a symmetric Dirichlet form in $L^2(\mu)$,
and let $H$ be the associated positive self-adjoint operator in $L^2(\mu)$.
\begin{alplist}[i]
\item \label{multop(a)} 
Let $f \in L^2 \cap L^\infty$ and $\rho := (I+H)^{-1} f$.
Then $\rho$ is a bounded multiplication operator on $D(\h)$.

\item \label{multop(b)} 
Let $\rho\in L^\infty(\mu)$ be a bounded multiplication operator on $D(\h)$,
and let $F\from \C\to\C$ be Lipschitz continuous. Then $F\circ\rho$ is a
bounded multiplication operator on $D(\h)$.
\end{alplist}
\end{lemma}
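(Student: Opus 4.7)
For part~\ref{multop(a)}, I would first exploit the sub-Markovian property of the resolvent $(I+H)^{-1}$ (which holds because $\h$ is a Dirichlet form) to obtain $\rho \in D(H)\cap L^\infty$ with $\|\rho\|_\infty \le \|f\|_\infty$, and consequently $H\rho = f-\rho \in L^\infty$. The goal is then the estimate
\[
  \h(\rho u) \le C\bigl(\h(u) + \|u\|_2^2\bigr)
\]
for all $u$ in a core of $\h$, with $C$ depending only on $\|f\|_\infty$. Closedness of $\h$ extends this to all $u \in D(\h)$, yielding boundedness of multiplication by $\rho$ on the form domain.

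To prove this estimate I would take $u \in D(\h)\cap L^\infty$ (so that $|u|^2\bar\rho$ lies in $D(\h)\cap L^\infty$ by the algebra property of Dirichlet forms on bounded functions) and test the identity $\h(\rho,v) = \langle H\rho,v\rangle = \langle f-\rho, v\rangle$ against $v = |u|^2 \bar\rho$. The right-hand side is trivially bounded by $(\|f\|_\infty + \|\rho\|_\infty)\|\rho\|_\infty \|u\|_2^2$. Using the Beurling--Deny representation of $\h$, the left-hand side produces, via the Leibniz rule for the carr\'e du champ, the integral of $|u|^2$ against the energy measure of $\rho$, plus a mixed term absorbed by Cauchy--Schwarz; a non-local piece is handled pointwise. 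Feeding the resulting control of the energy measure of $\rho$ into the Leibniz-type expansion of $\h(\rho u, \rho u)$ then gives the desired bound.

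For part~\ref{multop(b)}, I would reduce to $F(0)=0$ by splitting off the constant $F(0)$ (trivially a bounded multiplier), so that $|F(z)| \le L|z|$ with $L$ the Lipschitz constant of $F$. The crucial observation is that pointwise Lipschitz composition contracts all Beurling--Deny ingredients of $\h$: the energy measure of $F\circ\rho$ is pointwise dominated by $L^2$ times that of $\rho$, and for the non-local part one has $|F(\rho(x))-F(\rho(y))|^2 \le L^2 |\rho(x)-\rho(y)|^2$. Repeating the expansion from part~\ref{multop(a)} with $F\circ\rho$ in place of $\rho$, every occurrence of the energy measure of $\rho$ picks up at most a factor $L^2$, and the multiplier bound for $\rho$ transfers to $F\circ\rho$ (with a larger constant depending on $L$ and $\|\rho\|_\infty$).

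The step I expect to be most delicate is handling the non-local part of the Beurling--Deny decomposition in part~\ref{multop(a)}: there is no genuine Leibniz rule for the jump form, so one must argue from the pointwise identity $(\rho u)(x)-(\rho u)(y) = \rho(x)(u(x)-u(y)) + u(y)(\rho(x)-\rho(y))$, square, and control the second piece by pairing $H\rho$ against an appropriate test function --- effectively mimicking the strongly local argument at the level of the quadratic form rather than of the energy measure.
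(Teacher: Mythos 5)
Your core idea for part (a) is the same as the paper's: since $(I+H)\rho=f$, one tests against $\rho u^2$ (for real bounded $u$) and exploits a pointwise product identity to convert the ``energy of $\rho$ weighted by $|u|^2$'' into the harmless pairing $(f,\rho u^2)$. The paper even uses a slicker identity than yours, namely
\[
\bigl((\rho u)(x)-(\rho u)(y)\bigr)^2
= \rho(x)\rho(y)\bigl(u(x)-u(y)\bigr)^2
+ \bigl(\rho(x)-\rho(y)\bigr)\bigl((\rho u^2)(x)-(\rho u^2)(y)\bigr),
\]
which produces the term $(\h+1)(\rho,\rho u^2)=(f,\rho u^2)$ directly, with no cross term, no Cauchy--Schwarz and no absorption. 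The genuine gap in your write-up is that you run the argument through the Beurling--Deny representation (carr\'e du champ, energy measures, jump kernel). The lemma is stated for a symmetric Dirichlet form on an \emph{arbitrary} measure space $(X,\mu)$, and in the application $h_0$ is an arbitrary Dirichlet-form restriction of $h_{\max}$; no regularity (or even an underlying topology) is assumed, so the Beurling--Deny decomposition you invoke is simply not available. The paper's fix is exactly the standard one: apply the pointwise identity to the approximating forms $u\mapsto\frac1t\dual{(I-e^{-tH})u}{u}$, which \emph{are} given by sub-Markovian kernels on any measure space (this is the content of the cited result of Huang), and let $t\downarrow0$. Your argument becomes correct once you replace ``Beurling--Deny decomposition of $\h$'' by this approximation; as written, its foundation is missing.

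Part (b) has a second, independent gap: you propose to ``repeat the expansion from part (a) with $F\circ\rho$ in place of $\rho$'', but the expansion in (a) hinged on $H\rho=f-\rho\in L^\infty$, which is not available for a general bounded multiplier $\rho$ on $D(\h)$, let alone for $F\circ\rho$. To make your route work you would first have to extract, from the multiplier bound for $\rho$ alone, a bound on the weighted energy of $\rho$ (again via the approximating kernel forms, using $u(y)^2(\rho(x)-\rho(y))^2\le 2\bigl((\rho u)(x)-(\rho u)(y)\bigr)^2+2\|\rho\|_\infty^2(u(x)-u(y))^2$), and only then observe that Lipschitz post-composition contracts these quantities. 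The paper avoids all of this by a different device: it writes $(F\circ\rho)u=\Phi\circ(u,\rho u)$ with $\Phi(w,z)=F(z/w)w$, checks that $\Phi$ is Lipschitz on the relevant cone $\{|z|\le\|\rho\|_\infty|w|\}$, and quotes the theorem that Lipschitz functions of several variables operate on Dirichlet forms. Either route works, but yours needs the repair indicated above before it is a proof.
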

\begin{proof}
\ref{multop(a)}
Assume without loss of generality that $f$ and hence $\rho$ is real-valued.
We shall show that there exists a constant $c\ge0$ such that
$(\h+1)(\rho u) \le c(\h+1)(u)$ for all real-valued $u \in D(\h)\cap L^\infty$;
then the assertion follows since $D(\h)\cap L^\infty$ is a core for~$\h$
and the form $\h$ is real.

Observe that
$\rho u,\rho u^2 \in D(\h)$ since $\rho,u \in D(\h)\cap L^\infty$.
For all $x,y\in X$ we have $(\rho u)(x)^2 = \rho(x)\cdot(\rho u^2)(x)$ and
\begin{align*}
\MoveEqLeft
\bigl((\rho u)(x)-(\rho u)(y)\bigr)^2 \\
  &= \rho(x)\rho(y) \bigl(u(x)-u(y)\bigr)^2
     + \bigl(\rho(x)-\rho(y)\bigr) \bigl((\rho u^2)(x)-(\rho u^2)(y)\bigr).
\end{align*}
An application of \cite[Proposition~2.1]{hua02} yields
\[
  \dual{(I-e^{-tH})(\rho u)}{\rho u}
  \le \|\rho\|_\infty^2 \dual{(I-e^{-tH})u}{u} + \dual{(I-e^{-tH})\rho}{\rho u^2}
\]
for all $t>0$, and hence
\[
  (\h+1)(\rho u) \le \|\rho\|_\infty^2 \h(u) + (\h+1)(\rho,\rho u^2)
  = \|\rho\|_\infty^2 \h(u) + \dual{f}{\rho u^2}.
  \avoidbreak
\]
This completes the proof of \ref{multop(a)} since $f\rho\in L^\infty(\mu)$.

\ref{multop(b)}
It is not too difficult to see that the function
$\Phi \from \set{(w,z)\in\C\times\C}{|z|\le\|\rho\|_\infty |w|} \to \C$
defined by $\Phi(w,z) = F(\rfrac zw)w$ if $(w,z)\ne(0,0)$ and
$\Phi(0,0)=0$ is Lipschitz continuous. Moreover,
$(F\scirc\rho)\mkern1mu u = \Phi\circ(u,\rho u)$ for all $u\in D(\h)$,
so the assertion follows from \cite[Theorem~I.4.12]{mr92}
applied to $\RRe\Phi$ and to $\IIm\Phi$.
\end{proof}

\medskip

In the following let $\rho \in W^{1,1}_\loc \cap L^\infty$ be
a bounded multiplication operator on $D(h_0)$ with $\rho>0$ a.e.\ 
We further suppose that there exist $\kappa, K \ge 0$ such that
\begin{equation}\label{Wrho}
W_\lrho := \frac{\< \cA_0^\s \nabla \rho,\nabla \rho\>}{\rho^2} \le \kappa h_0 + K.
\end{equation}
We first consider the special case where $\rho^{-1}\in L^\infty$;
then $\rho^{-1} \in W^{1,1}_\loc$.
Let $\eps\in\R$.
Since $\rho^{\pm1}\in L^\infty$, there exists a Lipschitz continuous function
$F\from \C\to\C$ such that $F\circ\rho =\rho^\eps$.
Hence by Lemma~\ref{multop}\ref{multop(b)},
$\rho^\eps$ is a bounded multiplication operator on $D(h_0)$.
It follows that
$(u,v) \mapsto \form(\rho^\eps u, \rho^{-\eps} v)$ is a bounded form on $D(h_0)$.
A~straightforward computation yields
\begin{equation} \label{t-eps}
\begin{split}
\form(\rho^\eps u, \rho^{-\eps} v) 
= \form(u,v)
 & - \eps\dual{\cA\nabla u}{\rfrac{\nabla\rho}{\rho}\+v}
   + \eps\dual{\cA \rfrac{\nabla\rho}{\rho}\+u}{\nabla v} \\[0.8ex]
 & + \eps \dual{\inner{\bplusb}{\rfrac{\nabla\rho}{\rho}}\+u}{v}
   - \eps^2\dual{\tfrac{\< \cA \nabla\rho,\nabla\rho\>}{\rho^2}\+u}{v}
\end{split}
\end{equation}
for all $u,v \in D(h_0)$, so the form is of the same structure as $\form$,
with new lower-order coefficients
\[
  b_1^{(\eps)} = b_1 - \eps\cA^\top\rfrac{\nabla\rho}{\rho}, \quad
  b_2^{(\eps)} = b_2 - \eps\cA\rfrac{\nabla\rho}{\rho}, \quad
  Q^{(\eps)} = Q + \eps\inner{\bplusb}{\rfrac{\nabla\rho}{\rho}}
               - \eps^2\tfrac{\< \cA \nabla\rho,\nabla\rho\>}{\rho^2}\,.
\]

We define a new reference form $h_0^{(\eps)}$ with domain $D(h_0^{(\eps)}) = D(h_0)$ by
\[
  h_0^{(\eps)}(u,v)
  = \dual{\cA_0^\s \nabla u}{ \nabla v} + V_\eps^+(u,v),
\]
where $V_\eps = \RRe Q^{(\eps)}$. Whenever the tuple
\smash{$\bigl(\cA, b_1^{(\eps)}, b_2^{(\eps)}, Q_{\phantom0}^{(\eps)}, h_0^{(\eps)}\bigr)$}
satisfies Assumptions \ref{assump1}~--~\ref{assump4},
we denote by $\form^{(\eps)}$ the sesquilinear form
and by $\tau_p^{(\eps)}$ ($p \in [1,\infty]$) the functionals
associated with the new lower-order coefficients.

In the following let $\alpha_\s$ be as in~\eqref{alpha-s}.
By Assumption~\ref{assump3} there exists an $M\ge0$ such that
$\<\cA_0^\a\xi, \eta\>^2 \le M^2 \<\cA_0^\s\xi, \xi\> \<\cA_0^\s\eta, \eta\>$
for all $\xi, \eta \in \R^N$.
Moreover, $\mkUhat \le \hat c(h_0 + 1)$ for some $\hat c>0$ by Assumption~\ref{assump4}.
In the next result we will use the constants
\begin{equation} \label{Cp-def}
  C_p := 2 \bigl( M^2 + (1-\rfrac2p)^2 + \hat c \bigr) + \alpha_\s^2
  \qquad (p\in[1,\infty]).
\end{equation}

\begin{lemma}\label{form-weighted}
Let Assumptions \ref{assump1}~--~\ref{assump4} be satisfied, and let
$\rho\in W^{1,1}_\loc \cap L^\infty$ be a bounded multiplication operator on $D(h_0)$.
Assume that $\rho>0$ a.e., $\rho^{-1}\in L^\infty$ and that $\rho$ satisfies~\eqref{Wrho} for some $\kappa, K\ge 0$.
Let $p\in[1,\infty]$.
\begin{alplist}[i]
\item \label{form-weighted(a)}
Let $C := 1 + (C_1+1)\max\{\kappa,K\}$,
and let $0 < \eps \le \eps_0 := \min\{1,\frac{1}{2C}\}$.
Then $\tfrac{1}{2} h_0 - 1 \le h_0^{(\eps)} \le 2h_0+1$,
the tuple $\bigl(\cA, b_1^{(\eps)}, b_2^{(\eps)}, Q_{\phantom0}^{(\eps)}, h_0^{(\eps)}\bigr)$
satisfies Assumptions \ref{assump1}~--~\ref{assump4}, and
\[
  \tau_p^{(\eps)}(v)
  \ge \tau_p(v) - \eps C(h_0 +1)(v)
  \smallbds
\]
for all $v\in D(h_0)$.
\item \label{form-weighted(b)} 
Suppose that $\kappa = 0$, and let $\eps > 0$.
Then the tuple $\bigl(\cA, b_1^{(\eps)}, b_2^{(\eps)}, Q_{\phantom0}^{(\eps)}, h_0^{(\eps)}\bigr)$
satisfies Assumptions \ref{assump1}~--~\ref{assump4}, and
\[
  \tau_p^{(\eps)}(v)
  \ge \tau_p(v) - \delta (h_0+1)(v)
    - \eps^2 K \bigl(\tfrac{C_p}\delta + 1\bigr) \|v\|_2^2
\]
for all $\delta>0$ and $v\in D(h_0)$.
\end{alplist}
\end{lemma}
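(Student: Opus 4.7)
My plan is to exploit the identity~\eqref{t-eps}. Since $\rho^{\pm1}\in L^\infty$, Lemma~\ref{multop}\ref{multop(b)} makes $\rho^{\pm\eps}$ into bounded multiplication operators on $D(h_0)$, so that~\eqref{t-eps} reads $\form(\rho^\eps u,\rho^{-\eps}v)=\form^{(\eps)}(u,v)$, with the lower-order coefficients $b_j^{(\eps)},Q^{(\eps)}$ displayed after~\eqref{t-eps}. The proof then splits into verifying Assumptions~\ref{assump1}~--~\ref{assump4} for the new tuple, and estimating $\tau_p^{(\eps)}-\tau_p$. The analytical ingredients are Assumption~\ref{assump3} (to dominate $\cA$-pairings by $\cA_0^\s$-ones), Assumption~\ref{assump4} (to control $U_j:=\<(\cA_0^\s)^{-1}b_j,b_j\>$ by $h_0+1$), and the hypothesis~\eqref{Wrho} on $W_\lrho$.

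For the new tuple, \ref{assump1} and \ref{assump3} are automatic, and \ref{assump2} holds because normal contractions operate summand-wise on $h_0^{(\eps)}=\amax+V_\eps^+$. For~\ref{assump4}, pointwise applications of Assumption~\ref{assump3} give
\[
  \<(\cA_0^\s)^{-1}b_j^{(\eps)},b_j^{(\eps)}\>\le 2U_j+2\eps^2c^2W_\lrho,\qquad |Q^{(\eps)}|\le|Q|+\eps(U_1+U_2)^{1/2}W_\lrho^{1/2}+\eps^2 c W_\lrho,
\]
which~\ref{assump4} together with~\eqref{Wrho} bounds by a multiple of $h_0+1$. The bilateral comparison $\tfrac12 h_0-1\le h_0^{(\eps)}\le 2h_0+1$ asserted in part~(a) comes from $V_\eps-V=\eps\RRe\<\bplusb,\nabla\rho/\rho\>-\eps^2 W_\lrho$ and the $1$-Lipschitz bound $|V_\eps^+-V^+|\le|V_\eps-V|$: integrating against $|v|^2$, Cauchy--Schwarz in the $\cA_0^\s$-metric together with~\ref{assump4} and~\eqref{Wrho} yield $|V_\eps^+-V^+|(v)\le \eps\cdot O\bigl((1+\kappa+K)(h_0+1)(v)\bigr)$, which is absorbed into $\tfrac12(h_0+1)(v)$ once $\eps\le\eps_0=\min\{1,\tfrac{1}{2C}\}$. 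This comparison upgrades all the coefficient bounds above from $h_0+1$ to $h_0^{(\eps)}+1$.

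From~\eqref{tau_p}, only the $\RRe\form$-term and the $(\bplusb)$-term change under $\form\mapsto\form^{(\eps)}$, so
\[
  \tau_p^{(\eps)}(v)-\tau_p(v)
  = \RRe(\form^{(\eps)}-\form)(v)
   -(1-\rfrac2p)\dual{|v|\RRe(b_1^{(\eps)}+b_2^{(\eps)}-\bplusb)}{\nabla|v|}.
\]
Setting $u=v$ in~\eqref{t-eps} computes the first term as four explicit pieces, while $b_1^{(\eps)}+b_2^{(\eps)}-\bplusb=-2\eps(\cA_0^\s+i\cA_1^\s)\nabla\rho/\rho$ computes the second. Each resulting piece is either $\eps$ times a cross term linear in $\nabla v$ and $v$, or $\eps^2$ times a pure $W_\lrho$-term. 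Assumption~\ref{assump3} dominates every $\cA$-cross term pointwise by $\<\cA_0^\s\nabla v,\nabla v\>^{1/2}W_\lrho^{1/2}|v|$, and Young's inequality $ab\le\tfrac{\mu}{2}a^2+\tfrac{1}{2\mu}b^2$ splits this into an $h_0$-controlled part and a $W_\lrho|v|^2$-part. For~(a) I choose $\mu$ of order $1$ and apply~\eqref{Wrho} in full to reach $\eps\cdot O(1+C_1(\kappa\vee K))(h_0+1)(v)$, matching the claimed $C=1+(C_1+1)\max\{\kappa,K\}$; for~(b), with $\kappa=0$, all $W_\lrho$-pieces become absolute $\eps^2K\|v\|_2^2$-bounds, and Young with $\mu=2\delta/(\eps C_p)$ converts the cross terms into $\delta(h_0+1)(v)+\eps^2K(\tfrac{C_p}{\delta}+1)\|v\|_2^2$. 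The main obstacle will be the constant bookkeeping: in~(a), ensuring a single threshold $\eps_0$ simultaneously controls the bilateral comparison of $h_0$ and $h_0^{(\eps)}$ and the error in $\tau_p^{(\eps)}$; in~(b), arranging the Young splittings so that the emerging constant aligns exactly with the definition of $C_p$ in~\eqref{Cp-def}.
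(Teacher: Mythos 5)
Your proposal is correct and follows essentially the same route as the paper's proof: identify the new lower-order coefficients via~\eqref{t-eps}, reduce $\tau_p^{(\eps)}-\tau_p$ to $\eps$-linear cross terms plus $\eps^2 W_\lrho$-terms, and split these by Cauchy--Schwarz and Young's inequality using~\eqref{Wrho}. Two details to tighten: Assumption~\ref{assump2} for $h_0^{(\eps)}$ also requires closedness on $D(h_0)$, which is supplied precisely by your bilateral comparison (the contraction property alone does not suffice); and to land exactly on the constant $C_p$ of~\eqref{Cp-def} in part~(b) you must estimate the three cross terms separately with the constants $M$, $|1-\rfrac2p|$, $\alpha_\s$ and $\hat c$ (using $\dual{\cA_0^\s\eta}{\eta}=h_0(v)-h_0(|v|)$ for the $\cA_1^\s$-term), rather than the single crude constant from Assumption~\ref{assump3}.
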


\begin{proof}
First observe that the form \smash{$h_0^{(\eps)}$} satisfies Assumption~\ref{assump2}
if there exists a $c_\eps\ge0$ such that
$|h_0^{(\eps)}(v) - h_0(v)| \le \frac12 h_0(v) + c_\eps\|v\|_2^2$ for all $v\in D(h_0)$.
Then, using this estimate and~\eqref{Wrho}, one easily shows that the coefficients
$b_1^{(\eps)}, b_2^{(\eps)}, Q_{\phantom0}^{(\eps)}$ satisfy Assumption~\ref{assump4}.

We shall show that
\begin{align}
\label{h0-eps_est}
|h_0^{(\eps)}(v) - h_0(v)|
 &\le \delta(h_0+1)(v) + \eps^2 \bigl( \tfrac{\hat c}{\delta}+1 \bigr) W_\lrho(v), \\[0.5ex]
\label{taueps_est}
\tau_p^{(\eps)}(v)
 &\ge \tau_p(v) - \delta(h_0+1)(v) - \eps^2 \bigl( \tfrac{C_p}{\delta}+1 \bigr) W_\lrho(v)
\end{align}
for all $v\in D(h_0)$ and $\delta>0$.
Then one easily obtains the assertions of~\ref{form-weighted(a)} by choosing $\delta = \eps$
and noting that $\hat c \le C_p \le C_1$, $W_\lrho \le \max\{\kappa,K\}(h_0+1)$ and
\[
  \eps + \eps^2 \bigl( \rfrac{C_p}{\eps}+1 \bigr) \max\{\kappa,K\}
  \le \eps C \le \tfrac12\,,
\]
provided $0<\eps\le\eps_0$.
If $\kappa=0$, then we choose $\delta = \frac12$ in~\eqref{h0-eps_est},
and the assertions of~\ref{form-weighted(b)} follow since $W_\lrho \le K$.

It remains to verify~\eqref{h0-eps_est} and~\eqref{taueps_est}.
Note that
\begin{equation}\label{h0-eps-diff}
  |h_0^{(\eps)}(v) - h_0(v)| = |V_\eps^+(v) - V^+(v)| \le |V_\eps(v) - V(v)|
\end{equation}
for all $v \in D(h_0)$.
Since $V_\eps = V\mkern-2mu + \eps\inner{\RRe(\bplusb)}{\rfrac{\nabla\rho}{\rho}} 
- \eps^2W_\lrho$, we can estimate
\begin{equation}\label{V-eps-diff}
|V_\eps-V|
 \le \eps \bigl(4\Uhat\bigr)^{1/2} W_\lrho^{1/2} + \eps^2 W_\lrho
 \le \tfrac{\delta}{\hat c}\mkUhat + \eps^2 \tfrac{\hat c}{\delta} W_\lrho + \eps^2 W_\lrho
\end{equation}
by the Cauchy--Schwarz inequality and the definition~\eqref{Uhat} of $\Uhat$.
By~\eqref{h0-eps-diff} and the estimate $\frac{1}{\hat c}\mkUhat \le h_0+1$,
this implies~\eqref{h0-eps_est}.

Observe that $\RRe\bigl( -\frac1p\cA^\top\! + \frac{1}{p'}\cA \bigr)
= (1-\rfrac2p)\cA_0^\s + \cA_0^\a$ and
$\IIm\bigl( \cA^\top\! + \cA \bigr) = 2\cA_1^\s$,
where the real and imaginary parts are taken coefficient-wise.
It follows that
\[
  \RRe\bigl( \tfrac1p b_1^{(\eps)} \mkern-2mu- \tfrac1{p'} b_2^{(\eps)} \bigr)
  = \RRe\bigl( \tfrac1p b_1 - \tfrac1{p'} b_2 \bigr)
    + \eps \bigl( (1-\rfrac2p)\cA_0^\s + \cA_0^\a \bigr) \rfrac{\nabla\rho}{\rho}
  \smallbds
  \postdisplaypenalty10
\]
and
\[
  \IIm(b_1^{(\eps)} \mkern-2mu+ b_2^{(\eps)})
  = \IIm(\bplusb) - 2\eps \cA_1^\s \rfrac{\nabla\rho}{\rho}\,.
\]
Thus we obtain
\begin{equation}\label{taueps}
\smallbds
\begin{split}
\tau_p^{(\eps)}(v)
= {} & \tau_p(v) + 2\eps \dual{|v|\bigl((1-\rfrac2p)\cA_0^\s+\cA_0^\a\bigr) \rfrac{\nabla\rho}{\rho}}{\nabla |v|}
       \\[0.5ex]
 & + 2\eps\dual{|v|\cA_1^\s \rfrac{\nabla\rho}{\rho}}{\IIm(\overline{\sgn v}\,\nabla v)}
   + (V_\eps-V)(v)
\end{split}
\end{equation}
for all $v\in D(h_0)$.
\pagebreak[1]

Let $v\in D(h_0)$, and set $\eta = \IIm(\overline{\sgn v}\,\nabla v)$.
For all $\xi \in \R^N$, the definition of $M$ implies that \vspace{-0.5ex}
\[
  \<\cA_0^\a\xi, (\cA_0^\s)^{-1}\cA_0^\a\xi\>^2
  \le M^2 \<\cA_0^\s\xi, \xi\> \<\cA_0^\a\xi, (\cA_0^\s)^{-1}\cA_0^\a\xi\>
  \smallbds
\]
and hence
\begin{align*}
\inner{(\cA_0^\s)^{-1}(\alpha\cA_0^\s+\cA_0^\a)\xi}
                     {(\alpha\cA_0^\s+\cA_0^\a)\xi}
 &= \alpha^2 \<\cA_0^\s\xi, \xi\> + \<(\cA_0^\s)^{-1}\cA_0^\a\xi, \cA_0^\a\xi\>
    \\[0.5ex]
 &\le (\alpha^2+M^2) \<\cA_0^\s\xi, \xi\>
\end{align*}
for all $\alpha\in\R$.
By the Cauchy--Schwarz inequality we infer that
\begin{align*}
\MoveEqLeft[3]
\left| \inner{|v|\bigl((1-\rfrac2p)\cA_0^\s+\cA_0^\a\bigr)
              \rfrac{\nabla\rho}{\rho}}{\nabla |v|} \right|^2 \\[0.5ex]
&\le \bigl( (1-\rfrac2p)^2 + M^2 \bigr)
     \inner{\cA_0^\s\rfrac{\nabla\rho}{\rho}}{\rfrac{\nabla\rho}{\rho}} |v|^2
     \cdot \inner{\cA_0^\s\nabla |v|}{\nabla |v|}. \\[-1.3\baselineskip]
\end{align*}
Similarly,
\[
  \bigl| \inner{|v|\cA_1^\s\rfrac{\nabla\rho}{\rho}}{\eta} \bigr|^2
  \le \alpha_s^2
      \inner{\cA_0^\s\rfrac{\nabla\rho}{\rho}}{\rfrac{\nabla\rho}{\rho}} |v|^2
      \cdot \inner{\cA_0^\s \eta}{\eta}
\]
by~\eqref{alpha-s}.
Together with~\eqref{V-eps-diff} it follows from~\eqref{taueps} that
\begin{align*}
\tau_p^{(\eps)}(v)
 \ge {}& \tau_p(v)
       - 2\eps \bigl( M^2 + (1-\rfrac2p)^2 \bigr)^\frac12
         \amax(|v|)^\frac12 W_\lrho(v)^\frac12 \\
     & - 2\eps \alpha_\s \dual{\cA_0^s\eta}{\eta}^\frac12 W_\lrho(v)^\frac12
       - \bigl( \tfrac{\delta}{2\hat c}\Uhat
                + \eps^2 \tfrac{2\hat c}{\delta} W_\lrho + \eps^2 W_\lrho \bigr)(v).
\end{align*}
Finally, observe that $\amax(v) \le h_0(v)$ and
$\dual{\cA_0^s\eta}{\eta} = \amax(v) - \amax(|v|) = h_0(v) - h_0(|v|)$.
Then, estimating
\begin{align*}
2\eps \bigl( M^2 + (1-\rfrac2p)^2 \bigr)^\frac12 \amax(|v|)^\frac12 W_\lrho(v)^\frac12
 &\le \lfrac\delta2 h_0(|v|)
      + \eps^2 \lfrac2\delta \bigl( M^2 + (1-\rfrac2p)^2 \bigr) W_\lrho(v) \\
2\eps \alpha_\s \dual{\cA_0^s\eta}{\eta}^\frac12 W_\lrho(v)^\frac12
 &\le \delta \bigl( h_0(v) - h_0(|v|) \bigr)
      + \eps^2 \lfrac1\delta \alpha_\s^2 W_\lrho(v)
\end{align*}
and $\frac{\delta}{2\hat c}\mkUhat(v) \le \frac\delta2 (h_0+1)(|v|)$,
we conclude that~\eqref{taueps_est} holds, and the proof is complete.
\end{proof}

We point out that in the following result we do not assume $\rho^{-1}\in L^\infty$.

\begin{proposition}\label{weight-Um}
Let Assumptions~\ref{assump1}~--~\ref{assump8} be satisfied,
and let $\rho\in W^{1,1}_\loc \cap L^\infty$ be
a bounded multiplication operator on $D(h_0)$.
Assume that $\rho>0$ a.e.\ and that $\rho$ satisfies~\eqref{Wrho}
for some $\kappa, K\ge 0$.
Let $p\in \inter{J}$, and let $\mu_p>0$ and $\omega_p\in\R$ be as in~\eqref{mu-omega}.
\begin{alplist}[i]
\item \label{weight-Um(a)} 
Let $U'\from\Omega\to[0,\infty)$ be measurable,
and suppose that $U'\le c_1(h_0+1)$ for some $c_1>0$.
Then there exist $\delta,\overline{\eps} > 0$ such that
for all $\eps\in (0,\overline{\eps}]$, $\lambda>\omega_p+\mu_p$, $\mkU\in\U$
and $q\in J$ with \smash[b]{$|\rfrac1p-\rfrac1q|\le\delta$} one has
\begin{equation}\label{U'rho-eps}
  \|(U')^\frac1q \rho^{-\eps}(\lambda+\gen_\Up)^{-1}\rho^\eps\|_{q\to q}
  \le \bigl(\rfrac{2c_1}{\mu_p}\bigr)^{\frac1q}
      (\lambda-\omega_p-\mu_p)^{-\frac1{q'}}.
\end{equation}

\item \label{weight-Um(b)}
Suppose that $\kappa=0$. 
Then
\[
  \|\rho^{-\eps}S_\Up(t)\rho^\eps\|_{p\to p}
  \le \exp\Bigl(\mkern-1.5mu \bigl( \omega_p+\mu_p+\eps^2K(\rfrac{C_p}{\mu_p}+1) \bigr)t \Bigr)
\]
for all $\eps>0$, $\mkU\in\U$ and $t\ge0$, with $C_p$ from~\eqref{Cp-def}.
\end{alplist}
\end{proposition}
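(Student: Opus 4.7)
The plan is to identify $\rho^{-\eps}(\lambda+\gen_\Up)^{-1}\rho^\eps$ with the resolvent of the operator associated with the twisted form $\form^{(\eps)}+U$ from~\eqref{t-eps}, and to apply Propositions~\ref{strong-bound} and~\ref{appr} to the twisted system. I would first handle the case $\rho^{-1}\in L^\infty$ and reduce the general case to it by replacing $\rho$ with $\rho_n:=\rho+\rfrac1n$ and passing to the limit.

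Under $\rho^{-1}\in L^\infty$, both $\rho^\eps$ and $\rho^{-\eps}$ are bounded multiplication operators on $D(h_0)$ by Lemma~\ref{multop}\ref{multop(b)} and on every $L^r$. Using~\eqref{t-eps} together with the obvious identity $U(\rho^\eps u,\rho^{-\eps}v)=U(u,v)$, one sees that the $m$-sectorial operator $\gen_U^{(\eps)}$ on $L^2$ associated with the closure of $\form^{(\eps)}+U$ satisfies $\gen_U^{(\eps)}=\rho^{-\eps}\gen_U\rho^\eps$, so $(\lambda+\gen_U^{(\eps)})^{-1}=\rho^{-\eps}(\lambda+\gen_U)^{-1}\rho^\eps$. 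Lemma~\ref{form-weighted} shows that the twisted tuple $\bigl(\cA,b_1^{(\eps)},b_2^{(\eps)},Q^{(\eps)},h_0^{(\eps)}\bigr)$ satisfies Assumptions~\ref{assump1}--\ref{assump4} (for sufficiently small $\eps$ in part~\ref{weight-Um(a)}, for all $\eps$ in part~\ref{weight-Um(b)}), and controls $\tau_q^{(\eps)}$ in terms of $\tau_p$, $h_0$ and $\|\cdot\|_2^2$.

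For part~\ref{weight-Um(a)}, I would combine Lemma~\ref{form-weighted}\ref{form-weighted(a)} with the Lipschitz estimate~\eqref{tau-lip} and the coercivity $\tau_p(v)\ge\mu_ph_0(v)-\omega_p\|v\|_2^2$ from Lemma~\ref{lcomplex204} to obtain, for $\eps\le\overline\eps$ and $|\rfrac1p-\rfrac1q|\le\delta$ small enough,
\[
  \tau_q^{(\eps)}(v)\ge\tfrac{\mu_p}{2}h_0(v)-\bigl(\omega_p+\tfrac{\mu_p}{2}\bigr)\|v\|_2^2.
\]
This verifies Assumption~\ref{assump8} for the twisted system (the comparability $\tfrac12 h_0-1\le h_0^{(\eps)}\le 2h_0+1$ lets one use either $h_0$ or $h_0^{(\eps)}$ as reference) and, together with $U'\le c_1(h_0+1)$, yields $U'(v)\le\rfrac{2c_1}{\mu_p}\bigl(\tau_q^{(\eps)}(v)+(\omega_p+\mu_p)\|v\|_2^2\bigr)$; Proposition~\ref{strong-bound} then delivers~\eqref{U'rho-eps}. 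For part~\ref{weight-Um(b)}, with $\kappa=0$, Lemma~\ref{form-weighted}\ref{form-weighted(b)} applied with $\delta=\mu_p$ combined with Lemma~\ref{lcomplex204} gives
\[
  \tau_p^{(\eps)}(v)\ge-\bigl(\omega_p+\mu_p+\eps^2K(\rfrac{C_p}{\mu_p}+1)\bigr)\|v\|_2^2,
\]
so by~\eqref{omegap-tilde-def} the twisted $\tomega_p^{(\eps)}$ is bounded by the same quantity, and Proposition~\ref{appr} applied to the twisted system furnishes the stated bound for $S_{U\shift,p}^{(\eps)}=\rho^{-\eps}S_\Up\rho^\eps$.

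To remove the hypothesis $\rho^{-1}\in L^\infty$, I would apply the result to $\rho_n:=\rho+\rfrac1n$, which meets all the remaining hypotheses with the same constants $\kappa,K$ since $\<\cA_0^\s\nabla\rho_n,\nabla\rho_n\>/\rho_n^2\le W_\rho$, and has $\rho_n^{-1}\le n$. The bounds are uniform in~$n$; as $n\to\infty$, dominated convergence gives $\rho_n^\eps f\to\rho^\eps f$ in the relevant $L^r$-space, so that $\rho_n^{-\eps}\gen_{U\shift,r}^{(\eps)}\rho_n^\eps f\to\rho^{-\eps}\gen_{U\shift,r}^{(\eps)}\rho^\eps f$ a.e.\ along a subsequence, and Fatou's lemma on the left-hand side transfers the bounds to $\rho$. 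The most delicate part of the plan is the careful bookkeeping of constants when translating between the reference forms $h_0$ and $h_0^{(\eps)}$ so as to legitimately invoke Assumption~\ref{assump8} for the twisted system, and ensuring that the operator identity $\gen_U^{(\eps)}=\rho^{-\eps}\gen_U\rho^\eps$ is valid at the level of domains.
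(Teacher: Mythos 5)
Your overall strategy --- twisting the form with $\rho^{\pm\eps}$ via~\eqref{t-eps}, invoking Lemma~\ref{form-weighted} to control $\tau_q^{(\eps)}$, feeding the resulting coercivity into Propositions~\ref{strong-bound} and~\ref{appr}, and truncating $\rho$ from below to remove the hypothesis $\rho^{-1}\in L^\infty$ --- is exactly the paper's, down to the choice of $\delta$, $\overline\eps$ and the final constants.

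There is, however, one genuine gap: you apply Proposition~\ref{strong-bound} (resp.\ Proposition~\ref{appr}) to the twisted system for an \emph{arbitrary} $\mkU\in\U$, which presupposes $\mkU\in\U^{(\eps)}$, i.e.\ that $\form^{(\eps)}+U$ is a closable sectorial form. This is not automatic. Membership of $\U$ only gives sectoriality of $\form+U$; the twist adds terms of size $O(\eps)\cdot(h_0+1)$, and these can be absorbed only if one has a lower bound of the type $\RRe(\form+U)\ge\mu h_0-\omega$, which fails for a general potential $U\ge0$ --- by~\eqref{t+Uhat} one essentially needs $U\ge\mkUhat-c$ for this. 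The paper therefore first proves the estimates under the additional hypothesis $U\ge\mkUhat-c$, where \eqref{t+Uhat}, \eqref{mu-omega} and Lemma~\ref{form-weighted} give $\RRe(\form^{(\eps)}+U)\ge(\mu_p-\eps C)h_0-(\omega_p+c+\eps C)$ and hence closedness and sectoriality of the twisted form; a general $\mkU\in\U$ is then reached by a \emph{second} approximation $U_m=\mkU+(\mkUhat-m)^+$, with $S_{U_m\shift,p}\to S_\Up$ via Lemma~\ref{lp-conv}, before the limit in the truncation of $\rho$ is taken with Fatou's lemma. Your proposal needs this extra layer of approximation in the potential in addition to the truncation of~$\rho$; without it the identification of $\rho^{-\eps}\gen_U\rho^\eps$ with an operator to which Propositions~\ref{strong-bound} and~\ref{appr} apply breaks down. (A minor further point: in part~\ref{weight-Um(b)} one needs Lemma~\ref{form-weighted}\ref{form-weighted(b)} twice, once with $\delta=\mu_p/2$ to obtain sectoriality of $\form^{(\eps)}+U$ and once with $\delta=\mu_p$ for the stated growth bound.)
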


\begin{proof}
\ref{weight-Um(a)}
We first assume that $\rho^{-1}\in L^\infty$ and that $U \ge \mkUhat-c$ for some $c\in\R$.
In the last step of the proof we will remove these assumptions.
\pagebreak[1]

Let $\eps_0 > 0$ and $C \ge 1$ be as in Lemma~\ref{form-weighted}\ref{form-weighted(a)},
and fix $\eps \in (0,\eps_0]$.
Recall from~\eqref{t-eps} that the form $\form^{(\eps)}$ with lower-order coefficients
$b_1^{(\eps)}, b_2^{(\eps)}, Q^{(\eps)}$ is a bounded form on $D(h_0)$ given by
\begin{equation} \label{form-rho}
  \form^{(\eps)}(u,v) = \form(\rho^\eps u, \rho^{-\eps} v).
\end{equation}
We will employ the corresponding Dirichlet form $h_0^{(\eps)}$ and
the functionals $\tau_q^{(\eps)}$ ($q \in [1,\infty]$) as in Lemma~\ref{form-weighted};
in addition we use the corresponding set $\U^{(\eps)}$ of potentials
and the interval $J^{(\eps)}$.

By~\eqref{tau_p} we have $\RRe \form(v) = \tau_2(v)$ for all $v \in D(h_0)$,
and similarly for $\form^{(\eps)}$.
Thus, by Lemma~\ref{form-weighted}\ref{form-weighted(a)} we obtain
$\RRe\form^{(\eps)} \ge \RRe\form - \eps C (h_0+1)$.
It follows from \eqref{t+Uhat} and~\eqref{mu-omega} that $\RRe(\form+\Uhat) \ge \mu_p h_0 - \omega_p$.
Together with $U \ge \mkUhat-c$ we infer that
\begin{equation} \label{form-eps+U}
  \RRe (\form^{(\eps)} + U)
  \ge (\mu_p - \eps C) h_0 - (\omega_p+c+\eps C).
\end{equation}
Therefore if $\eps < \frac{\mu_p}{C}$, then $\form^{(\eps)} + U$ is 
a closed sectorial form; in particular, $\mkU \in \U^{(\eps)}$.
It follows from~\eqref{form-rho} that the form $\form^{(\eps)} + U$ is associated
with the operator $\rho^{-\eps} \gen_U \rho^\eps$ with domain $\rho^{-\eps} D(\gen_U)$
and that the corresponding $C_0$-semigroup is
$(\rho^{-\eps}e^{-t\gen_U}\mkern-2mu\rho^\eps)_{t\ge0}$.

By \eqref{tau-lip} there exists a $C_0>0$ such that
$|\tau_p(v)-\tau_q(v)|\le C_0 |\rfrac1p-\rfrac1q|(h_0+1)(v)$ for all
$v\in D(h_0)$ and $q\in [1,\infty]$.
Set $\delta = \frac{\mu_p}{4C_0}$ and fix
$q\in [1,\infty]$ with $|\rfrac1p-\rfrac1q| \le \delta$.
If $\eps \le \overline \eps := \min\bigl\{\eps_0,\frac{\mu_p}{4C}\bigr\}$,
then by Lemma~\ref{form-weighted}\ref{form-weighted(a)} and~\eqref{mu-omega} we obtain
\begin{align*}
\tau_q^{(\eps)}(v) 
 &\ge \tau_q(v) - \eps C (h_0 + 1)(v)
  \ge \tau_p(v) - (C_0 \delta + \eps C) (h_0 + 1)(v) \\[0.5ex minus 0.3ex]
 &\ge \tfrac{1}{2} \mu_p h_0(v) - (\omega_p + \tfrac{1}{2} \mu_p) \|v\|_2^2
\end{align*}
for all $v \in \smash{D(h_0)}$.
On the one hand, this implies that $q$ is in the interior of $J^{(\eps)}$,
due to Lemma~\ref{lcomplex204} and the estimate $h_0 \ge \frac12(h_0^{(\eps)}-1)$.
On the other hand we infer, using $U' \le c_1(h_0+1)$, that
\[
  U'(v) \le c_1\rfrac{2}{\mu_p} \bigl( \tau_q^{(\eps)}(v)
            + (\omega_p + 2\cdot\tfrac12 \mu_p) \|v\|_2^2 \mkern1mu \bigr)
  \avoidbreak
\]
for all $v \in D(h_0)$.
Thus we have verified the conditions of Proposition~\ref{strong-bound}
for the perturbed operator $\rho^{-\eps} \gen_U \rho^\eps$
associated with the form $\form^{(\eps)} + U$, and~\eqref{U'rho-eps} follows
in the case that the initial assumptions on $\rho$ and $U$ hold.

Now we prove the assertion for general $\rho$ and $\mkU$.
We define $\rho_k := \rho\vee \frac1k$ and $U_m := \mkU+(\mkUhat-m)^+$;
then $\rho_k^{-1} \in L^\infty$, $W_{\lrho_k} \le W_\lrho \le \kappa h_0 + K$ for all $k\in\N$
and $U_m \in \U$, $U_m \ge \mkUhat-m$ for all $m\in\N$,
so by the above it follows that
\begin{equation} \label{km}
  \bigl\|(U')^\frac1q\rho_k^{-\eps}(\lambda + \gen_{U_m\shift,p})^{-1}\rho_k^\eps\bigr\|_{q\to q}
  \le \bigl(\rfrac{2c_1}{\mu_p}\bigr)^{\frac1q} (\lambda-\omega_p-\mu_p)^{-\frac1{q'}}
\end{equation}
for all $\lambda > \omega_p+\mu_p$ and $q \in [1,\infty]$ with 
$|\rfrac1p-\rfrac1q| \le \delta$.
By Lemma~\ref{lp-conv} we see that $S_{U_m\shift,p} \to S_\Up$
in the semigroup sense on $L^p$ as $m\to\infty$.
Moreover, up to a subsequence, 
$(\lambda+\gen_\Up)^{-1}\rho_k^\eps f \to (\lambda+\gen_\Up)^{-1}\rho^\eps f$
a.e.\ as $k \to \infty$, for all $f\in L^p\cap L^q$.
Thus we obtain the assertion by first letting $m\to\infty$ in~\eqref{km},
and then $k\to\infty$, taking into account Fatou's lemma.

\ref{weight-Um(b)}
The proof is similar as above; we only point out the main differences.
Fix $\eps>0$. Instead of~\eqref{form-eps+U} we obtain
\[
  \RRe (\form^{(\eps)} + U)
  \ge \tfrac{\mu_p}{2} h_0
      - \bigl( \omega_p + c + \tfrac{\mu_p}{2} + \eps^2K(\rfrac{2C_p}{\mu_p}+1) \bigr),
\]
by Lemma~\ref{form-weighted}\ref{form-weighted(b)}
applied with $\delta = \frac{\mu_p}{2}$.
As above it follows that $\mkU \in \U^{(\eps)}$.

Applying Lemma~\ref{form-weighted}\ref{form-weighted(b)} with $\delta=\mu_p$
we obtain
\[
  \tau_p^{(\eps)}(v)
  \ge  - \bigl(\omega_p + \mu_p
    + \eps^2K(\rfrac{C_p}{\mu_p}+1) \bigr)\|v\|_2^2
\]
for all $v \in D(h_0)$.
Hence $p \in J^{(\eps)}$, and the assertion follows from Proposition~\ref{appr}.
\end{proof}

The final step in the preparation of the proof of Theorem~\ref{mainresult2}
is the existence of a weight $\rho$
that satisfies the assumptions of Proposition~\ref{weight-Um}
and belongs to $D(h_0)$, not just $W^{1,1}_\loc$.

\begin{lemma}\label{lcompl230}
Let $f \in L^2$ be such that $0 < f \le 1$ a.e.
Set $\rho = (I + H_0)^{-1} f$, where $H_0$ is the
positive self-adjoint operator associated with the Dirichlet form~$h_0$.
Then $0 < \rho \le 1$ a.e., $\rho \in D(h_0)$,
$\rho$ is a bounded multiplication operator on $D(h_0)$ and
\[
  W_\lrho = \frac{ \< \cA_0^\s \nabla \rho, \nabla \rho \>}{\rho^2} \le 4 h_0 + 2
  \smallbds
\]
as forms.
\end{lemma}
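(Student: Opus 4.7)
The plan is to establish the four claims in the natural order: the pointwise bounds on $\rho$, the multiplier property, and then the main quantitative estimate $W_\lrho \le 4h_0 + 2$, which is the substance of the lemma.

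Since $\rho = (I + H_0)^{-1}f$ with $f \in L^2 \cap L^\infty$, we immediately obtain $\rho \in D(H_0) \subset D(h_0)$. The Dirichlet-form property of $h_0$ implies that the resolvent $(I+H_0)^{-1}$ is sub-Markovian, hence preserves the order interval $[0,1]$; in particular $0 \le \rho \le 1$ a.e. The strict positivity $\rho > 0$ a.e.\ then follows from the standard fact that the resolvent associated with a Dirichlet form maps a.e.-strictly positive $L^2$ functions to a.e.-strictly positive ones. Since $f \in L^2 \cap L^\infty$, Lemma~\ref{multop}\ref{multop(a)} applied with $\h = h_0$ gives that $\rho$ is a bounded multiplication operator on $D(h_0)$.

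For the main estimate, the idea is to test the resolvent identity
\[
  h_0(\rho,v) + (\rho,v) = (f,v) \qquad (v \in D(h_0))
\]
against the formal choice $v = |u|^2/\rho$, for $u \in D(h_0)$. Using $\nabla(|u|^2/\rho) = (2|u|/\rho)\nabla|u| - (|u|^2/\rho^2)\nabla\rho$, a direct computation gives
\[
  \int W_\lrho|u|^2 = 2\int \tfrac{|u|}{\rho}\<\cA_0^\s\nabla\rho, \nabla|u|\>
     - \int \<\cA_0^\s\nabla\rho, \nabla(|u|^2/\rho)\>.
\]
Applying the Cauchy--Schwarz inequality in the positive form $\<\cA_0^\s\cdot,\cdot\>$ followed by Young's inequality with parameter $\eps = \tfrac12$ to the first term absorbs $\tfrac12 W_\lrho(u)$ on the left and leaves $2\amax(|u|) \le 2\amax(u)$ on the right (the last inequality because $\amax$ is a Dirichlet form and $|u|$ is a normal contraction of $u$). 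For the second term, the resolvent identity with $v = |u|^2/\rho$ and $h_0 = \amax + V^+$ yields
\[
  -\int\<\cA_0^\s\nabla\rho, \nabla(|u|^2/\rho)\>
  = V^+(u) + \|u\|_2^2 - \int\tfrac{f|u|^2}{\rho}
  \le V^+(u) + \|u\|_2^2,
\]
since $f,\rho \ge 0$. Combining these bounds and using $2\amax(u) + V^+(u) \le 2h_0(u)$ yields $\tfrac12 W_\lrho(u) \le 2h_0(u) + \|u\|_2^2$, i.e.\ exactly $W_\lrho \le 4h_0 + 2$.

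The main technical obstacle is that $v = |u|^2/\rho$ need not lie in $D(h_0)$ a priori, so the calculation above has to be performed on a regularization. The plan is to work with $v_{\delta,M} := (|u|^2/(\rho \vee \delta)) \wedge M$ for $\delta \in (0,1)$ and $M > 0$: the fact that $\rho$ is a bounded multiplier on $D(h_0)$ together with the Dirichlet-form property of $h_0$ (normal contractions, stability under products with bounded elements of the domain) shows that $v_{\delta,M} \in D(h_0)$; carrying out the above computation with $v_{\delta,M}$ in place of $v$ and then passing to the limits $\delta \to 0$ and $M \to \infty$ by monotone and dominated convergence (exploiting $\rho > 0$ a.e.\ and $u \in L^2$) delivers the claimed estimate with the exact constants $4$ and $2$.
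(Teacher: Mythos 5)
Your formal computation is exactly the paper's computation in disguise (the paper substitutes $v=(\rho+\eps)^{-1/2}u$ into $h_0(u)$ and uses $h_0(\rho,v^2)=\dual{f-\rho}{v^2}\ge-\|u\|_2^2$, which is algebraically the same as testing the resolvent identity against $u^2/(\rho+\eps)$), and it produces the correct constants. The gap is in the regularization and the limit passage. Two points. First, your argument absorbs $\tfrac12 W_\lrho(u)$ from the right-hand side into the left-hand side; this is only legitimate if the absorbed quantity is finite, which is precisely what is not known a priori. Second, and more seriously, the specific truncation $\rho\vee\delta$ does not produce a self-consistent identity: one gets
\[
  \int \ind_{[\rho>\delta]}\frac{|u|^2}{\rho^2}\<\cA_0^\s\nabla\rho,\nabla\rho\>
  = 2\int\frac{|u|}{\rho\vee\delta}\<\cA_0^\s\nabla\rho,\nabla|u|\> - \amax\bigl(\rho,\tfrac{|u|^2}{\rho\vee\delta}\bigr) ,
\]
and after Cauchy--Schwarz the cross term is bounded by $\tfrac12\int\frac{|u|^2}{(\rho\vee\delta)^2}\<\cA_0^\s\nabla\rho,\nabla\rho\> + 2\amax(|u|)$. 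The quadratic term here lives on all of $\Omega$, whereas the left-hand side only carries the indicator $\ind_{[\rho>\delta]}$ (because $\nabla(\rho\vee\delta)$ vanishes on $[\rho<\delta]$ while $\nabla\rho$ does not). The mismatch $\delta^{-2}\int_{[\rho\le\delta]}|u|^2\<\cA_0^\s\nabla\rho,\nabla\rho\>$ is not absorbed and need not tend to $0$ as $\delta\to0$: already in one dimension with $\rho$ behaving like $x$ near a point, this term is of order $\delta^{-1}$. So "monotone and dominated convergence" does not close the argument as written.

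Both defects disappear if you replace $\rho\vee\delta$ by $\rho+\delta$ (which is the paper's choice, in the form $v=(\rho+\eps)^{-1/2}u$). Then $v_\delta=|u|^2/(\rho+\delta)\in D(h_0)$ for $u\in D(h_0)\cap L^\infty$ by Lemma~\ref{multop}\ref{multop(b)}, the regularized quantity $X_\delta:=\int\frac{|u|^2}{(\rho+\delta)^2}\<\cA_0^\s\nabla\rho,\nabla\rho\>\le\delta^{-2}\|u\|_\infty^2\,\amax(\rho)$ is finite so the absorption is valid, the identity $X_\delta = 2\int\frac{|u|}{\rho+\delta}\<\cA_0^\s\nabla\rho,\nabla|u|\>-\amax(\rho,v_\delta)$ is exact with matching weights, the terms $V^+(\rho,v_\delta)\le V^+(u)$, $\dual{\rho}{v_\delta}\le\|u\|_2^2$ and $\dual{f}{v_\delta}\ge0$ behave as you claim, and $X_\delta\uparrow W_\lrho(u)$ by monotone convergence. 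One should also record the final reduction (the estimate is first proved for real-valued $u\in D(h_0)\cap L^\infty$, then extended to all of $D(h_0)$ since this set is a core and the form is real), which the paper does and which your $M$-truncation only partially replaces.
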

\begin{proof}
By Lemma~\ref{proposition-app} below we have $\rho > 0$ a.e.,
and $\rho\le1$ a.e.\ since $h_0$ is a Dirichlet form.
Moreover, $\rho \in D(H_0) \subset D(h_0)$.
It follows from Lemma~\ref{multop}\ref{multop(a)} that $\rho$ is a
bounded multiplication operator on $D(h_0)$.
We shall show that $h_0(u) \ge \frac14 W_\lrho(u) - \frac12 \|u\|_2^2$
for all real-valued functions $u \in D(h_0) \cap L^\infty$;
then the last assertion follows since $D(h_0) \cap L^\infty$
is a core for $h_0$ and the form $h_0$ is real.

Let $\eps > 0$. Then the function $x \mapsto (x^++\eps)^{-1/2}$
is bounded and Lipschitz continuous on $\R$. By Lemma~\ref{multop}\ref{multop(b)} it follows
that $v := (\rho+\eps)^{-1/2} u \in D(h_0) \cap L^\infty$.
Moreover, $\nabla u =
(\rho+\eps)^{1/2} \+ \nabla v + \frac12 v (\rho+\eps)^{-1/2} \+ \nabla \rho$.
Therefore,
\begin{align*}
\< \cA_0^\s \nabla u, \nabla u \>
 &= (\rho + \eps) \< \cA_0^\s \nabla v, \nabla v \>
    + \< \cA_0^\s \nabla \rho, v\nabla v \>
    + \tfrac14 \+ (\rho + \eps)^{-1} \+ v^2 \+ \< \cA_0^\s \nabla \rho, \nabla \rho \> \\[0.8ex]
 &\ge \tfrac12 \+ \< \cA_0^\s \nabla \rho, \nabla v^2 \>
    + \tfrac14 \+ (\rho + \eps)^{-2} \+ u^2 \+ \< \cA_0^\s \nabla \rho, \nabla \rho \> ,
\end{align*}
because $\< \cA_0^\s \nabla v, \nabla v \> \ge 0$.
Since $v^2 \in D(h_0)$ and $V^+(u) \ge \frac12 V^+(\rho,v^2)$, we obtain
\begin{equation}\label{h0u}
  h_0(u) = \dual{\cA_0^\s \nabla u}{\nabla u} + V^+(u)
        \ge \frac14 \int\frac{\<\cA_0^\s \nabla \rho, \nabla \rho\>}{(\rho+\eps)^2}u^2
        + \frac12 h_0(\rho,v^2) .
\end{equation}
Moreover,
\[
  h_0(\rho,v^2) = \dual{H_0\rho}{v^2}
  = \dual{f-\rho}{v^2} \ge -\dual{\rho}{v^2} \ge - \|u\|_2^2\,.
\]
Letting $\eps\to0$ in~\eqref{h0u}, we conclude that
$h_0(u) \ge \frac14 W_\lrho(u) - \frac12 \|u\|_2^2$,
and the proof is complete.
\end{proof}

\begin{lemma}\label{proposition-app}
Let $p\in[1,\infty)$, and
let $B\in \mathcal{L}(L^p)$ be a positive operator with dense range.
Then $Bf>0$ a.e.\ for all $f\in L^p$ such that $f>0$ a.e.
\end{lemma}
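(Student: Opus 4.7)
The plan is to argue by contradiction. Suppose $f \in L^p$ satisfies $f > 0$ a.e.\ while $E := \{Bf = 0\}$ has positive measure. Positivity of $B$ says that $0 \le g \Rightarrow 0 \le Bg$, so for any $h \in L^p$ with $|h| \le Cf$ for some $C > 0$ one has $0 \le Bh^\pm \le C\+Bf$, which forces $(Bh^\pm)|_E = 0$ and hence $(Bh)|_E = 0$.

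Next I would introduce $L := \set{h \in L^p}{|h| \le Cf \text{ for some } C > 0}$; by the previous step, $B(L) \subseteq N_E$, where $N_E := \set{u \in L^p}{u|_E = 0}$. The key observation is that $L$ is dense in $L^p$: for $g \in L^p$, the truncations $g_n := g \cdot \ind_{\{|g| \le nf\}}$ lie in $L$ (since $|g_n| \le nf$), and because $f > 0$ a.e.\ the sets $\{|g| \le nf\}$ exhaust $\Omega$ up to a null set, so by dominated convergence $g_n \to g$ in $L^p$.

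Since $N_E$ is closed in $L^p$ (any $L^p$-convergent sequence admits an a.e.-convergent subsequence, so the pointwise vanishing on $E$ passes to the limit), continuity of $B$ together with density of $L$ yields $B(L^p) \subseteq \overline{B(L)} \subseteq N_E$. On the other hand, $\sigma$-finiteness lets one pick $E' \subseteq E$ of positive finite measure, and then $\ind_{E'} \in L^p \setminus N_E$, so $N_E$ is a \emph{proper} closed subspace of $L^p$. This contradicts density of the range of $B$, completing the proof.

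There is no serious obstacle; the only point requiring any care is identifying the correct dense subspace on which to exploit positivity, namely the order-ideal in $L^p$ generated by $f$.
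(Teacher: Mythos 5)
Your argument is correct and is essentially the paper's own proof: both reduce to functions dominated by a multiple of $f$ (the paper truncates via $g\wedge(nf)$, you via $g\+\ind_{\{|g|\le nf\}}$), use positivity to force the image to vanish on $E$, pass to the limit by continuity of $B$ and closedness of $\sset{u}{u\restriction{E}=0}$, and then contradict density of the range. The only cosmetic difference is that you phrase it as a contradiction and spell out the density/closedness steps that the paper leaves implicit.
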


\begin{proof}
Let $E$ be a measurable set such that $Bf=0$ a.e.\ on $E$.
Let $g \in L^p$ satisfy $g\ge 0$.
Then $0 \le B(g\wedge (nf)) \le n Bf$,
so $B(g\wedge (nf)) = 0$ a.e.\ on $E$ for all $n \in \N$.
Moreover, $B(g\wedge (nf)) \to Bg$ in $L^p$.
Therefore $Bg = 0$ a.e.\ on $E$.
By linearity, $Bg=0$ a.e.\ on $E$ for all $g\in L^p$.
Since $R(B)$ is dense in $L^p$, we conclude that $E$ is a null set.
\end{proof}

Now we are ready to prove our main result on the existence of a
quasi-contractive $C_0$-semigroup associated with $\cL$.

\begin{proof}[Proof of Theorem~\ref{mainresult2}]
It was proved in Lemmas~\ref{lcomplex204} and \ref{lcomplex202}\ref{lcomplex202-1} that 
$\sinter{J}\ne\varnothing$ and $\U \neq \varnothing$.
Now let $U,U_1,U_2,\ldots \in\U$ satisfy
$U_m\le U$ for all $m\in\N$ and $U_m\to0$ a.e.\ as $m\to\infty$.
Without loss of generality we assume that $U>0$ a.e.
By Proposition~\ref{appr}, the estimate
\begin{equation}\label{contr-p}
\|S_{U_m\shift,r}(t)\|_{r\to r} \le e^{\tomega_r t}
\avoidbreak
\end{equation}
holds for all $m\in\N$, $r\in J$ and $t \ge 0$.

Let $p \in \inter{J}$.
It follows from Lemma~\ref{lcompl230} that there exists a 
$\rho \in D(h_0) \cap L^\infty$ such that $\rho > 0$ a.e., $\rho$ is 
a bounded multiplication operator on $D(h_0)$ and $W_\lrho \le 4 h_0 + 2$.
Hence the assumptions of
Proposition~\ref{weight-Um}\ref{weight-Um(a)} are satisfied with $\kappa=4$ and $K=2$.
As a result there exist
$\delta,\overline{\eps},C_0,\omega_0 > 0$ such that
\begin{equation}\label{twist}
\|U^\frac1q \rho^{-\eps}(\lambda+\gen_{\sUtilde\mkern-3mu,\mkern1mup})^{-1}\rho^\eps\|_{q\to q}
  \le C_0(\lambda-\omega_0)^{-\frac1{q'}}
\avoidbreak
\end{equation}
for all $\eps \in (0,\overline{\eps}]$, $\lambda>\omega_0$, $\sUtilde\in\U$
and all $q\in J$ with $|\rfrac1q-\rfrac1p|\le\delta$.

Choose $q\in J$ such that $0 < \rfrac1p - \rfrac1q \le \frac{\overline{\eps}}2\wedge\delta$.
Then $\eps := 2\bigl(\rfrac1p - \rfrac1q\bigr) \le \overline{\eps}$.
Observe that $M_\eps := \set{f \in L^1\cap L^\infty}{\rho^{-\eps}f \in L^1\cap L^\infty}$
is dense in $L^p$.

Now we prove that the sequence $(S_{U_m\shift,p})_{m \in \N}$ converges
in the semigroup sense.
Let $f\in M_\eps$. Following the argument of Theorem~\ref{illu},
replacing \eqref{decomp} with~\eqref{decompeps} and applying~\eqref{twist},
one finds $C,\omega>0$ such that
\[
  \lambda \bigl\|(\lambda + \gen_{U_m\shift,p})^{-1}f - (\lambda + \gen_{U_k\shift,p})^{-1}f\bigr\|_p
  \le C \|\rho^{-\eps}f\|_q
  \bigl\| \bigl(\tfrac{U_k+U_m}{U}\bigr)^{\frac{pq}{q-p}} \+ U\rho^2 \bigr\|_1^{\frac1p-\frac1q}
  \avoidbreak
\]
for all $\lambda \ge 2\omega$ and $k,m\in\N$.

Note that $\mkU\rho^2\in L^1$ since $\mkU \le c(h_0 + 1)$ and $\rho\in D(h_0)$.
By the dominated convergence theorem it follows that the sequence
$\bigl(\lambda (\lambda + \gen_{U_m})^{-1}f\bigr)_{m \in \N}$
is convergent in~$L^p$, uniformly for $\lambda \ge 2\omega$. 
Hence there exists
a $C_0$-semigroup $S_p$ on~$L^p$ such that $S_{U_m\shift,p}\to S_p$ in the semigroup sense.
Since, by Proposition~\ref{appr}, the semigroups $S_{U_m\shift,p}$ are analytic
in a common sector and with a uniform bound, we conclude that $S_p$ is analytic as well.
Moreover, \eqref{contr-p} implies the bound~\eqref{q-c-main2}.

Finally, let $r\in J\setminus \inter{J}$. Then it follows from~\eqref{contr-p}
that $S_p$ extrapolates to a quasi-contractive semigroup $S_r$ on $L^r$,
which is a $C_0$-semigroup by \cite{voi92}.
\end{proof}

The above proof also gives the following estimate that will be important
in the next two sections.

\begin{theorem} \label{pcomplex330}
Let Assumptions~\ref{assump1}~--~\ref{assump8} be satisfied,
let $p\in \sinter J$ and let $\gen_p$ be as in Theorem~\ref{mainresult2}.
Then for all $u\in D(\gen_p)$ one has $u|u|^{\frac p2-1}\in D(h_0)$ and 
\[
  \RRe\dual{\gen_p u}{u|u|^{p-2}} \ge \h_p\bigl(u|u|^{\frac p2-1}\bigr).
\]
\end{theorem}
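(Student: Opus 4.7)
The plan is to obtain the estimate for $\gen_p$ as a strong-resolvent limit of the corresponding estimates for the approximating generators $\gen_{U_m\shift,p}$ provided by Proposition~\ref{appr}, using the lower semi-continuity machinery of Lemma~\ref{srs}\ref{srs(a)}.

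First I would choose a sequence $(U_m)$ in $\U$ with $U_m\to 0$ a.e.\ and $U_m\le\mkUhat$; for instance $U_m:=(\mkUhat-m)^+$ works, since $\mkUhat\in\U$ by Lemma~\ref{lcomplex202}\ref{lcomplex202-1} and $\U$ is closed under the operation $U\mapsto (U-c)^+$ by Remark~\ref{rem1}\ref{rem1(b)}. In the proof of Theorem~\ref{mainresult2} it was established that the semigroup $S_p$ is associated with $(\form,\U)$ in the sense of Definition~\ref{asso}, so $S_{U_m\shift,p}\to S_p$ in the semigroup sense on $L^p$. This convergence implies strong resolvent convergence of the generators $\gen_{U_m\shift,p}\to \gen_p$ on $L^p$.

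Second, by Proposition~\ref{appr}, equation~\eqref{lconc}, for every $m\in\N$ and every $u\in D(\gen_{U_m\shift,p})$ one has
\[
  \RRe\dual{\gen_{U_m\shift,p} u}{u|u|^{p-2}} \ge \h_p\bigl(u|u|^{\frac p2-1}\bigr).
\]
Since $\h_p$ is by construction the lower semi-continuous hull of $\tau_p$ on $L^2$, it is lower semi-continuous as a map $L^2\to\R\cup\{\infty\}$. Lemma~\ref{srs}\ref{srs(a)}, applied with $A_k=\gen_{U_k\shift,p}$, $A_\infty=\gen_p$ and $\h=\h_p$, then yields
\[
  \RRe\dual{\gen_p u}{u|u|^{p-2}} \ge \h_p\bigl(u|u|^{\frac p2-1}\bigr)
\]
for all $u\in D(\gen_p)$.

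Third, to deduce that $u|u|^{\frac p2-1}\in D(h_0)$, observe that for $u\in D(\gen_p)\subset L^p$ the function $v:=u|u|^{\frac p2-1}$ lies in $L^2$, since $\|v\|_2^2=\|u\|_p^p<\infty$. The previous estimate combined with the bound $\RRe\dual{\gen_p u}{u|u|^{p-2}}\le \|\gen_p u\|_p\|u\|_p^{p-1}<\infty$ gives $\h_p(v)<\infty$. But $p\in\sinter J$, so by~\eqref{mu-omega-used} the functional $\h_p$ takes the value $+\infty$ on $L^2\setminus D(h_0)$, which forces $v\in D(h_0)$. The only non-routine ingredient is the lower semi-continuity of $\h_p$ on $L^2$, which is immediate from its definition as a supremum of lower semi-continuous functionals; thus no step poses a substantial obstacle once Proposition~\ref{appr} and Lemma~\ref{srs} are in place.
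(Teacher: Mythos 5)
Your proposal is correct and follows essentially the same route as the paper: approximate by the generators $\gen_{U_m\shift,p}$ from Proposition~\ref{appr}, invoke the estimate~\eqref{lconc}, and pass to the limit via Lemma~\ref{srs}\ref{srs(a)} using the lower semi-continuity of $\h_p$, with the membership $u|u|^{\frac p2-1}\in D(h_0)$ extracted from~\eqref{mu-omega-used}. The only (immaterial) difference is that you fix the particular sequence $U_m=(\mkUhat-m)^+$, whereas the paper simply reuses the sequence from the proof of Theorem~\ref{mainresult2}.
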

\begin{proof}
We use the notation from the previous proof.
By Proposition~\ref{appr} one has
\[
  \RRe\dual{\gen_{U_m\shift,p}u}{u|u|^{p-2}}
  \ge \h_p\bigl(u|u|^{\frac p2-1}\bigr)
\]
and hence $u|u|^{\frac p2-1}\in D(h_0)$ for all $m \in \N$ and $u \in D(\gen_{U_m, p})$.
Then the assertion is a consequence of Lemma~\ref{srs}\ref{srs(a)}.
\end{proof}

\section{Proof of Theorem~\ref{mainresult}}\label{S-mainresult}

In this section we derive Theorem~\ref{mainresult} from our more general result,
Theorem~\ref{mainresult2}.
The main technical part of the proof of Theorem~\ref{mainresult} is contained in the next lemma. 
Recall the definitions of $\delta_p$, $\eps_p$, $\homega_p$, $\Bhat_p$ and $I$ from 
\eqref{deo}~--~\eqref{I-def}.

\begin{lemma}\label{calculus}
Let Assumptions \ref{assump1}~--~\ref{assump4} be satisfied, and let $p \in I$.
Suppose that $\beta' > 0$ or $\alpha_\s B' = 0$.
Then
\begin{equation}\label{tau-bound-below}
\tau_p(v)
\ge \bigl( \eps_p - \eps - \tfrac{\eps}{1-\eps}\delta_p^2 \bigr) \+ h_0(|v|)
    + \eps h_0(v)
    - \bigl( \homega_p + \tfrac{\eps}{1-\eps}\Bhat_p \bigr) \|v\|_2^2
\end{equation}
for all $v \in D(h_0)$ and $\eps \in [0,1)$.
\end{lemma}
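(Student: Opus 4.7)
The plan is to expand $\tau_p(v)$ pointwise in the vector fields $\xi:=\nabla|v|$ and $\eta:=\IIm(\overline{\sgn v}\,\nabla v)$, bound each resulting piece via Lemma~\ref{lcomplex1.05}, and close the argument with Young's inequality applied to two $Y$-linear terms using two weights whose sum is $1-\eps$. With $\zeta:=\xi+i\eta=\overline{\sgn v}\,\nabla v$ a direct computation yields
\[
\RRe\<\cA\zeta,\zeta\>=\<\cA_0^\s\xi,\xi\>+\<\cA_0^\s\eta,\eta\>-2\<\cA_1^\a\eta,\xi\>,
\]
together with analogous formulas for the drift and potential terms. Substituting into~\eqref{tau_p} and using $1-(1-\rfrac2p)^2=\frac{4}{pp'}$, with the abbreviations $X^2:=\amax(|v|)$, $Y^2:=\int\<\cA_0^\s\eta,\eta\>$, $H:=h_0(|v|)$, $S:=\|v\|_2^2$, I get
\[
\tau_p(v)=\tfrac{4}{pp'}X^2+Y^2-\bigl[2\dual{\cA_1^\a\eta}{\xi}+\dual{|v|\IIm(\bplusb)}{\eta}\bigr]-2|1-\rfrac2p|\!\int|\<\cA_1^\s\xi,\eta\>|+\dual{|v|\RRe(\tfrac2p b_1-\tfrac2{p'}b_2)}{\xi}+V(v).
\]

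Next I bound each piece. The bracketed combination is controlled \emph{in absolute value} by $(\beta'^2 H+B'S)^{1/2}Y$: apply~\eqref{beta-'} both to $v$ and to $\overline v$ and note that $\eta(\overline v)=-\eta(v)$, which flips the sign of the left-hand side while leaving the right-hand side invariant. The $\cA_1^\s$-term is bounded by $2|1-\rfrac2p|\alpha_\s XY$ via~\eqref{alpha-s} and Cauchy--Schwarz. The drift term is bounded below by $-(\tfrac2p\beta_1+\tfrac2{p'}\beta_2)H-(\tfrac2p B_1+\tfrac2{p'}B_2)S$ via~\eqref{beta-1-2} applied with $u=|v|$. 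For the potential, $V^-(v)=V^-(|v|)\le\gamma H+\Gamma S$ by~\eqref{gamma}, while $X^2=H-V^+(|v|)$ and $\frac{4}{pp'}\le1$ give $\frac{4}{pp'}X^2+V^+(v)\ge\frac{4}{pp'}H$. Assembling all this and using $\eps_p+\delta_p^2=\frac{4}{pp'}-\gamma-\tfrac2p\beta_1-\tfrac2{p'}\beta_2$ and $\homega_p-\Bhat_p=\tfrac2p B_1+\tfrac2{p'}B_2+\Gamma$ produces the intermediate estimate
\[
\tau_p(v)\ge(\eps_p+\delta_p^2)H+Y^2-(\beta'^2 H+B'S)^{1/2}Y-2|1-\rfrac2p|\alpha_\s XY-(\homega_p-\Bhat_p)S.
\]

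To close the estimate I use $X\le\sqrt H$, split $Y^2=\eps Y^2+(1-\eps)Y^2$, and absorb the two $Y$-linear terms via Young's inequality with weights $\lambda_1,\lambda_2>0$ satisfying $\lambda_1+\lambda_2=1-\eps$:
\[
(\beta'^2 H+B'S)^{1/2}Y\le\lambda_1 Y^2+\tfrac{\beta'^2 H+B'S}{4\lambda_1},\qquad 2|1-\rfrac2p|\alpha_\s\sqrt H\,Y\le\lambda_2 Y^2+\tfrac{(\alpha_\s|1-\rfrac2p|)^2}{\lambda_2}H.
\]
The $Y^2$-budget matches exactly, leaving $\eps Y^2$ on the right-hand side. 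The $H$-cost $\frac{(\beta'/2)^2}{\lambda_1}+\frac{(\alpha_\s|1-\rfrac2p|)^2}{\lambda_2}$ is minimised, by the Cauchy--Schwarz identity $\tfrac{a^2}{\lambda_1}+\tfrac{b^2}{\lambda_2}\ge\tfrac{(a+b)^2}{\lambda_1+\lambda_2}$ with equality when $\lambda_i$ is proportional to the corresponding numerator, at $\lambda_1=(1-\eps)\tfrac{\beta'/2}{\delta_p}$, $\lambda_2=(1-\eps)\tfrac{\alpha_\s|1-\rfrac2p|}{\delta_p}$, with minimum value $\tfrac{\delta_p^2}{1-\eps}$; this produces the required $H$-coefficient $\eps_p-\tfrac{\eps}{1-\eps}\delta_p^2$. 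With the same $\lambda_1$, the $S$-cost $\tfrac{B'}{4\lambda_1}$ equals $\tfrac{\Bhat_p}{1-\eps}$ thanks to the identity $\Bhat_p=\tfrac{B'\delta_p}{2\beta'}$ (valid when $\beta'>0$), yielding the claimed $S$-coefficient $\homega_p+\tfrac{\eps}{1-\eps}\Bhat_p$.

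The main obstacle, and the reason for the dichotomy $\beta'>0$ or $\alpha_\s B'=0$, is that the single parameter $\lambda_1$ must simultaneously realise the minimal $H$-cost (which fixes $\lambda_1\propto\beta'/\delta_p$) and keep the $S$-cost at the advertised level (which requires $\lambda_1\ge\tfrac{(1-\eps)B'}{4\Bhat_p}$); these two demands align only in the cases permitted by the hypothesis. The degenerate subcases $\delta_p=0$, or $\beta'=B'=0$, or $\beta'=\alpha_\s=0$, are handled by sending the appropriate $\lambda_i$ to its limit or by omitting one of the two Young steps: in each such subcase the target inequality collapses to a direct square completion of the remaining $Y$-linear term, and the identity $\Bhat_p=B'/4$ (for $\beta'=0$) reproduces the correct $S$-coefficient.
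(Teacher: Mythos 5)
Your proof is correct and follows essentially the same route as the paper: the same decomposition of $\tau_p(v)$ in terms of $\xi=\nabla|v|$ and $\eta=\IIm(\overline{\sgn v}\,\nabla v)$, the same appeal to \eqref{elcomplex1.05(b);1} and \eqref{beta-'}~--~\eqref{gamma}, and an equivalent final absorption step (your two weighted Young inequalities with $\lambda_1+\lambda_2=1-\eps$, optimised via $\tfrac{a^2}{\lambda_1}+\tfrac{b^2}{\lambda_2}\ge\tfrac{(a+b)^2}{\lambda_1+\lambda_2}$, are algebraically the same as the paper's inequality \eqref{elemineq} followed by a single Young step). Your treatment of the degenerate cases also matches the paper's.
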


\begin{proof}
Let $\zeta = \overline{\sgn v}\,\nabla v$, $\xi=\RRe\zeta=\nabla|v|$
and $\eta = \IIm \zeta$. Then an easy computation yields
\[
  \RRe\dual{\cA\nabla v}{\nabla v}
    - (1-\rfrac2p)^2 \dual{\cA_0^\s \nabla |v|}{\nabla |v|}
  = \tfrac4{pp'}\dual{\cA_0^\s\xi}{\xi} + \dual{\cA_0^\s\eta}{\eta}
    + 2\dual{\cA_1^\a\xi}{\eta},
\]
so
\begin{align*}
\tau_p(v) =
{} & \smash[t]{ \tfrac4{pp'}\dual{\cA_0^\s\xi}{\xi} + \dual{\cA_0^\s\eta}{\eta}
     + 2\dual{\cA_1^\a\xi}{\eta} - 2|1-\rfrac2p|\int |\<\cA_1^\s\xi, \eta\>| } \\
   & + 2\dual{|v|\RRe(\tfrac1pb_1-\tfrac1{p'}b_2)}{\xi}
     - \dual{|v|\IIm(\bplusb)}{\eta} + V^+(v) - V^-(v) \\[0.5ex]
= {} & I_1 + I_2 + \ldots + I_8.
\end{align*}

The terms
$I_3$, $I_4$ and $I_6$ involving the imaginary parts of the
coefficients can be estimated as follows.
Firstly
\[
-I_4 = 2|1-\rfrac2p| \int |\<\cA_1^\s\xi, \eta\>|
   \le 2|1-\rfrac2p| \cdot \alpha_\s \dual{\cA_0^\s\xi}{\xi}^{1/2}
                                     \dual{\cA_0^\s\eta}{\eta}^{1/2}
\]
by \eqref{alpha-s}, and secondly \eqref{elcomplex1.05(b);1} and \eqref{beta-'}
imply that
\begin{align*}
-(I_3+I_6)
 &= -2\dual{\cA_1^\a\xi}{\eta} + \dual{|v|\IIm(\bplusb)}{\eta} \\
 &= \IIm \dual{\cA_1^\a \nabla v - v\IIm (\bplusb)}{\nabla v} \alignstrut \\
 &\le \bigl( (\beta'^2h_0+B')(|v|) \bigr)^{1/2} \dual{\cA_0^\s\eta}{\eta}^{1/2}. \alignstrut
\end{align*}
Set $x = h_0(|v|) = \dual{\cA_0^\s\xi}{\xi} + V^+(v)$, $y =
\dual{\cA_0^\s\eta}{\eta}$, $z = B'\|v\|_2^2$
and $\alpha_p = \alpha_\s|1-\rfrac2p|$. Then
\begin{equation}\label{I346-est}
  {-}I_4 - (I_3+I_6)  
  \le \bigl( 2\alpha_px^{1/2} + (\beta'^2x+z)^{1/2}\bigr) y^{1/2} .
\end{equation}

Now we employ the inequality
\begin{equation}\label{elemineq}
  \bigl( \alpha\sqrt a + \sqrt{\beta^2 a + b} \+ \bigr)^2 \le (\alpha+\beta)^2a + (1+\tfrac\alpha\beta)b,
\end{equation}
valid for all $\alpha,a,b\ge0$ and $\beta>0$.
(This estimate is trivial for $b=0$; for $b>0$ it easily follows by differentiating with respect to $b$.)
In the case $\beta'>0$ it follows from \eqref{elemineq}, \eqref{deo} and~\eqref{Bhat} that
\[
  \Bigl( 2\alpha_px^{1/2} + (\beta'^2x+z)^{1/2} \Bigr)^{\!2}
  \le (2\alpha_p+\beta')^2 x +
     \biggl( \mkern-1.5mu 1 + \frac{2\alpha_p}{\beta'} \mkern-1.5mu \biggr) z
  = 4 \bigl( \delta_p^2x + \Bhat_p\|v\|_2^2 \bigr);
\]
in the case $\beta' = \alpha_\s B' = 0$ this becomes an equality
if \smash{$\frac{2\alpha_p}{\beta'}$} is replaced with~$0$ (note that then $\alpha_pz = 0$).
Thus by~\eqref{I346-est} we obtain
\[
  -I_4 - (I_3+I_6)
  \le 2 \bigl( \delta_p^2x + \Bhat_p\|v\|_2^2 \bigr)^{1/2} y^{1/2}
  \le \frac{1}{1-\eps} \bigl( \delta_p^2x + \Bhat_p\|v\|_2^2 \bigr) + (1-\eps) y.
\]

Since $\xi = \nabla |v|$, we infer from \eqref{beta-1-2} and~\eqref{gamma} that
\begin{align*}
-(I_5 + I_8)
 &= -\frac2p \dual{|v|\RRe b_1}{\xi} + \frac2{p'} \dual{|v|\RRe b_2}{\xi} + V^-(v) \\[0.5ex minus 0.2ex]
 &\le \left( \frac2p\beta_1 + \frac2{p'}\beta_2 + \gamma \right) h_0(|v|)
      + \left( \frac2p B_1 + \frac2{p'}B_2 + \Gamma \right) \|v\|_2^2\,.
\end{align*}
Moreover, $I_1 + I_7 \ge \frac{4}{p p'} x$ and $I_2 = y$, so we conclude that
\begin{align*}
\tau_p(v)
 \ge {} & {\left( \frac{4}{pp'} - \frac{1}{1-\eps} \+ \delta_p^2
      - \frac2p \beta_1 - \frac2{p'} \beta_2 - \gamma \right)}x + \eps y \\[0.2ex minus 0.2ex]
 & {} - \left( \frac{1}{1-\eps} \Bhat_p
              + \frac2p B_1 + \frac2{p'} B_2 + \Gamma \right) \|v\|_2^2  \\[0.5ex minus 0.2ex]
 = {} & \bigl( \eps_p - \eps - \tfrac{\eps}{1-\eps}\delta_p^2 \bigr) x
      + \eps (x+y)
      - \bigl( \homega_p + \tfrac{\eps}{1-\eps} \Bhat_p \bigr) \|v\|_2^2\,.
\end{align*}
This completes the proof since $x = h_0(|v|)$ and $x+y = h_0(v)$.
\end{proof}

Now we are able to prove the first main theorem of this paper.

\begin{proof}[Proof of Theorem~\ref{mainresult}]
Applying Lemma~\ref{calculus} with $\eps=0$ we obtain
\begin{equation}\label{abs-est}
  \tau_p(v) \ge \eps_p h_0(|v|) - \homega_p \|v\|_2^2
\end{equation}
for all $p\in I$ and $v\in D(h_0)$. 
It follows that $\tomega_p \le \homega_p$,
with $\tomega_p$ as in~\eqref{omegap-tilde-def},
and this implies $I\subseteq J$.
Moreover $\h_p(v) \ge \eps_p h_0(|v|) - \homega_p \|v\|_2^2$ for all $v\in D(h_0)$
since the right-hand side of~\eqref{abs-est} is lower semi-continuous in~$v$.

Next let $p\in \sinter{I}$, so that $\eps_p > 0$.
Let $\eps \in (0,1)$ satisfy
$\eps + \smash{\frac{\eps}{1-\eps}\delta_p^2} \le \eps_p$.
Then it follows from Lemma~\ref{calculus} that~\eqref{mu-omega} holds 
with $\mu_p = \eps$ and $\omega_p = \homega_p + \frac{\eps}{1-\eps}\smash{\Bhat_p}$.
In particular, Assumption~\ref{assump8} is satisfied since
$p \in \sinter I \subset \sinter J$.
Moreover, $\h_p(v) \ge \eps h_0(v) - \omega_p\|v\|_2^2$ for all $v\in D(h_0)$.
Now all the assertions of Theorem~\ref{mainresult} follow from Theorem~\ref{mainresult2}
and Theorem~\ref{pcomplex330}.
\end{proof}

The case where $\beta'=0$ and $\alpha_\s B' > 0$ was excluded in Theorem~\ref{mainresult}.
In this case one does not necessarily obtain a quasi-contractive $C_0$-semigroup on~$L^p$
corresponding to $\gen$ if $p \in \partial I$, as the following example shows.

\begin{example}\label{open-ex}
Let $N=1$, $\Omega=(0,1)$, $\cA=1+i$, $b_1=-i$, $b_2=Q=0$ and $D(h_0) = W^{1,2}(\Omega)$.
Then $\gen = -(1+i)\frac{d^2}{dx^2} - i\frac{d}{dx}$ with Neumann boundary conditions.
Observe that in \eqref{alpha-s}~--~\eqref{gamma}
one can choose $\alpha_\s = 1$, $\beta_1 = \beta_2 = B_1 = B_2 = \gamma = \Gamma = 0$, 
$\beta'=0$ and $B'=1$.
Therefore,
\[
  \eps_p = \tfrac{4}{pp'} - (1-\rfrac2p)^2 = 1 - 2(1-\rfrac2p)^2
\]
and hence $I = [4-2\sqrt2,4+2\sqrt2]$. It follows from Theorem~\ref{mainresult},
applied with small $\beta'>0$ instead of $\beta'=0$,
that $S_2$ extrapolates to a quasi-contractive $C_0$-semigroup $S_p$ on $L^p$,
for all $p\in \inter{I}$, but we now show that this is not true for $p=4+2\sqrt2$.

Let $p=4+2\sqrt2$ and $r=1-\sqrt2-i$. Let $\tau\from[0,1]\to[0,\infty)$ be a
$C^\infty$-function satisfying $\tau(0) = \tau'(0) = \tau'(1) = 0$ and $\tau > 0$ on $(0,1]$.
Let $\lambda>0$ and consider $u = u_\lambda = e^{\lambda r\tau}$.
Then $u \in C^\infty[0,1]$ and $\frac{\partial u}{\partial n} = 0$ on $\partial\Omega$,
so $u \in D(\gen_2)$. We are going to compute the real part of
\[
  \dual{\gen_2 u}{|u|^{p-2}u}
  = (1+i) \dual{u'}{(|u|^{p-2}u)'} - i\dual{u'}{|u|^{p-2}u}.
\]
We have $u' = \lambda r\tau'u$ and $|u|^{p-2}u = \exp\bigl(\lambda(r + (p-2)\RRe r)\tau\bigr)$.
Therefore,
\begin{align*}
\inner{u'}{(|u|^{p-2}u)'}
 &= \inner{\lambda r\tau'u}{\lambda\bigl(r + (p-2)\RRe r\bigr)\tau'|u|^{p-2}u}
    \\[0.5ex]
 &= \lambda^2\bigl(|r|^2 + (p-2)r\RRe r\bigr)(\tau')^2|u|^p.
\end{align*}
A straightforward computation yields $|r|^2 + (p-2)r\RRe r = 2(1+i)$, so we infer that
\[
  \dual{\gen_2 u}{|u|^{p-2}u}
  = 4i\lambda^2 \int_0^1 (\tau')^2|u|^p - i\lambda r \int_0^1 \tau'|u|^p.
\]
Since $\RRe(ir) = 1$ and $|u|^p = e^{p\lambda(\RRe r)\tau} = e^{-\sqrt8 \lambda\tau}$,
we conclude that
\[
  \RRe \dual{\gen_2 u_\lambda}{|u_\lambda|^{p-2}u_\lambda}
  = -\lambda \int_0^1 \tau'|u|^p
  = \frac{1}{\sqrt8} e^{-\sqrt8 \lambda\tau} \big|_0^1
  \to -\frac{1}{\sqrt8} \qquad (\lambda\to\infty).
\]
Because $u_\lambda \in L^p$, $\gen_2u_\lambda \in L^p$
and $\|u_\lambda\|_p \to 0$ as $\lambda \to \infty$, the Lumer--Phillips theorem implies
that $S_2$ does not extrapolate to a quasi-contractive $C_0$-semigroup $S_p$ on~$L^p$.
\end{example}

\begin{remark}\label{const-rem2}
Here we comment on the inequalities~\eqref{alpha-s}~--~\eqref{beta-1-2}
in Lemma~\ref{lcomplex1.05}.
\begin{alplist}[i]
\item \label{const-rem(d)}
It is easy to see that~\eqref{beta-1-2} holds if
$\<(\cA_0^\s)^{-1} b_j, b_j\> \le \beta_j^2\+h_0 + 2\beta_jB_j$.
Such an assumption was used, e.g., in \cite{L} to obtain quasi-contractive
semigroups as in Theorem~\ref{mainresult}.

\item \label{const-rem(a)}
From the definition $\eta(u) = \IIm(\overline{\sgn u}\,\nabla u)$ it follows that
$\eta(\overline u) = -\eta(u)$ for all $u \in D(h_0)$.
Thus, \eqref{beta-'} is actually a two-sided estimate:
\[
  \bigl| \IIm \dual{\cA_1^\a \nabla u -  u\IIm(\bplusb)}{\nabla u} \bigr|
  \le \bigl(\beta'^2h_0(|u|)+B'\|u\|_2^2\bigr)^{\frac12} \dual{\cA_0^\s \eta(u)}{\eta(u)}^{\frac12}.
\]

\item \label{const-rem(c)}
By Assumptions~\ref{assump3} and~\ref{assump4},
there exist constants $\alpha_\a, \hat B\ge0$ and $\hat\beta>0$ such that
\vspace{-1.5ex}
\begin{equation}\label{alpha-a-hat-beta}
\begin{split}
|\< \cA_1^\a \xi,\eta\>|^2
  &\le \alpha_\a^2 \+ \<\cA_0^\s \xi,\xi\> \+ \<\cA_0^\s \eta,\eta\>, \\[0.8ex minus 0.3ex]
\inner{(\cA_0^\s)^{-1}\IIm(\bplusb)}{\IIm(\bplusb)}
  &\le \hat\beta^2 \+ h_0+\hat B
\end{split}
\end{equation}
for all $\xi,\eta\in\R^N$.
These inequalities are somewhat more explicit than~\eqref{beta-'};
we now show that they imply~\eqref{beta-'} with
$\beta'= 2 \alpha_\a + \hat\beta$ and $B'= (1 + \frac{2\alpha_\a}{\hat\beta}) \hat B$.

Let $u\in D(h_0)$ and set $\xi = \nabla|u|$, $\eta = \IIm(\overline{\sgn u}\,\nabla u)$.
Then \eqref{elcomplex1.05(b);1} gives
\begin{align*}
\IIm \dual{\cA_1^\a \nabla u - u\IIm(&\bplusb)}{\nabla u}
    = \dual{-2\cA_1^\a \xi + |u|\IIm(\bplusb)}{\eta} \\[0.3ex minus 0.2ex]
 &\le \Bigl( 2\alpha_\a (\cA_0^\s\xi,\xi)^\frac12
             + \bigl( \hat\beta^2 \+ h_0(|u|) + \hat B\|u\|_2^2 \bigr)^\frac12 \Bigr)
      (\cA_0^\s\eta,\eta)^\frac12.
\end{align*}
Since $(\cA_0^\s\xi,\xi) \le h_0(|u|)$, it follows from~\eqref{elemineq} that
\[
  \IIm \dual{\cA_1^\a \nabla u -  u\IIm(\bplusb)}{\nabla u}
  \le \Bigl((2 \alpha_\a + \hat\beta)^2h_0(|u|)+(1 + \tfrac{2\alpha_\a}{\hat\beta})\hat B\|u\|_2^2\Bigr)^{\!\frac12} \mkern-1mu
      \dual{\cA_0^\s \eta}{\eta}^{\frac12},
  \avoidbreak
\]
i.e., the estimate~\eqref{beta-'} holds with
$\beta'= 2 \alpha_\a + \hat\beta$ and $B'= (1 + \frac{2\alpha_\a}{\hat\beta}) \hat B$.
\end{alplist}
\end{remark}

In the next example we explain why the estimates~\eqref{alpha-a-hat-beta}
are much cruder than~\eqref{beta-'}.
The main point is that $\cA_1^\a$ essentially plays the role of a
first-order coefficient $i\div\cA_1^\a$, rather than a second-order coefficient.

\pagebreak[2]

\begin{example}\label{imag-coeffs}
Let us assume that $C_\c^\infty(\Omega)$ is a core for~$h_0$,
i.e., $h_0=\overline{h_{\max}\restrict{C_\c^\infty(\Omega)}}$.
\begin{alplist}[i]
\item \label{imag-coeffs(a)} 
Suppose that
$b := i\div\cA_1^\a = \bigl(i\sum_{j=1}^N \partial_j(\cA_1^\a)_{jk}\bigr)_k$
is a locally integrable vector field. Then by the anti-symmetry of $\cA_1^\a$ one obtains
\begin{align*}
\IIm \dual{\cA_1^\a \nabla u - u\IIm(\bplusb)}{\nabla u}
 &= \IIm \dual{u\IIm (b - b_1 - b_2)}{\nabla u} \\
 &= \dual{|u|\IIm (b - b_1 - b_2)}{\eta(u)}
\end{align*}
for all $u\in C_\c^\infty(\Omega)$ and hence for all $u\in D(h_0)$.
Thus, \eqref{beta-'} is valid if
\[
  \inner{(\cA_0^\s)^{-1}\IIm (b - b_1 - b_2)}{\IIm (b - b_1 - b_2)}
  \le \beta'^2 \+ h_0 + B'.
\]
Note that $b=0$ if $\cA_1^\a$ has divergence free columns (e.g., if $\cA_1^\a$ is constant).
In this case, the form $\form$ and the associated semigroup are independent of $\cA_1^\a$,
and the constant $\alpha_\a$ in~\eqref{alpha-a-hat-beta} does not play any role.

\item \label{imag-coeffs(b)} 
Assume that $b := \IIm(\bplusb) \in L^p(\Omega)$ for some $p>1$
and that $b$ is the restriction of a
compactly supported divergence free vector field $b^\sharp\in L^p(\R^N)$.
If $b$ itself is divergence free, then this is the case, for instance,
when $b$ has compact support in $\Omega$ or when $\Omega$ is a simply connected Lipschitz domain.
(In the latter case use \cite[Proposition~4.1]{kmpt00} with $k=s=0$ and $\sigma=1$.)
Then the exterior derivative of $(-\Delta)^{-1}b^\sharp$,
\[
  \cA' := \Bigl(\tfrac12\bigl(\partial_k(-\Delta)^{-1}b^\sharp_j-\partial_j(-\Delta)^{-1}b^\sharp_k\bigr)\Bigr)_{\mkern-1mu\raisebox{0.3ex}{$\scriptstyle jk$}}
      \in W^{1,p}(\R^N)
\]
is an anti-symmetric matrix such that $\div \cA' = b^\sharp$.
As in~\ref{imag-coeffs(a)} it follows that
\[
  \dual{\cA_1^\a \nabla u - u\IIm(\bplusb)}{\nabla u}
  = \dual{(\cA_1^\a - \cA')\nabla u}{\nabla u}
\]
for all $u\in D(h_0)$.
Hence, if there exists an $\alpha\ge0$ such that
\begin{equation}\label{cA'}
  \bigl|\inner{(\cA_1^\a - \cA')\xi}{\eta}\bigr|^2
  \le \alpha^2 \<\cA_0^\s \xi,\xi\> \+ \<\cA_0^\s \eta,\eta\>
\end{equation}
for all $\xi,\eta\in \R^N$, then by~\eqref{elcomplex1.05(b);1} we see that
\eqref{beta-'} is valid with $\beta'=2\alpha$ and $B'=0$.
In the case where $\IIm(\bplusb)$ is oscillating, this can lead to much better
estimates than Remark~\ref{const-rem2}\ref{const-rem(c)}.
\end{alplist}
\end{example}

\section{Extension of the interval}\label{extension}

According to Theorem~\ref{mainresult2}, the couple $(\form,\U)$ is associated with
a quasi-contractive $C_0$-semigroup on $L^p$ for every $p\in J$.
The aim in this section is to show that under additional assumptions
there exist consistent analytic $C_0$-semigroups on $L^p$ for $p$ from a larger interval. 
In general, these semigroups are not quasi-contractive any more.
\pagebreak[2]

The following result is an extension of \cite[Proposition~9]{lv00}.

\begin{lemma}\label{bdd-multipl}
Suppose that $D(h_0)$ is an ideal of $D(\amax)$.
Let $\rho\in W^{1,1}_\loc \cap L^\infty$, and
assume that there exists a $c\ge0$ such that
$\<\cA_0^\s \nabla \rho, \nabla \rho\> \le c(h_0+1)$.
Then $\rho$ is a bounded multiplication operator on $D(h_0)$.
\end{lemma}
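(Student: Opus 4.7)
The plan is to combine the product rule for Sobolev functions with the ideal property of $D(h_0)$ in $D(\amax)$, and then extract the desired norm bound from an explicit calculation.

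First, I would reduce to the case $u \in D(h_0) \cap L^\infty$: since $h_0$ is a Dirichlet form by Assumption~\ref{assump2}, two-dimensional normal contractions operate on $h_0$, so $D(h_0)\cap L^\infty$ is a core for $h_0$. The product rule $\nabla(\rho u) = \rho \nabla u + u \nabla \rho$ then holds in $L^1_\loc$, because $\rho \in W^{1,1}_\loc \cap L^\infty$ while $u\in L^\infty \cap W^{1,1}_\loc$ (the latter thanks to $D(h_0)\subseteq D(\amax) \subseteq W^{1,1}_\loc$ by~\eqref{D-amax-def}). In particular $\rho u \in W^{1,1}_\loc \cap L^2$.

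Next I would bound $\amax(\rho u)$ directly. Using the product rule and the elementary inequality $\<\cA_0^\s(a+b),a+b\>\le 2\<\cA_0^\s a,a\> + 2\<\cA_0^\s b,b\>$, one obtains
\[
  \amax(\rho u) \le 2\|\rho\|_\infty^2 \amax(u) + 2\int_\Omega |u|^2 \<\cA_0^\s\nabla\rho,\nabla\rho\>.
\]
The second term is the potential $\<\cA_0^\s\nabla\rho,\nabla\rho\>$ evaluated on $u$; by the hypothesis $\<\cA_0^\s\nabla\rho,\nabla\rho\>\le c(h_0+1)$ (as forms), it is dominated by $2c(h_0+1)(u)$. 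Hence $\amax(\rho u) \le C_1(h_0+1)(u)$, and together with $\|\rho u\|_2\le\|\rho\|_\infty\|u\|_2$ we conclude that $\rho u \in D(\amax)$.

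Now comes the key step: invoking the ideal property. We have $\rho u \in D(\amax)$ and $|\rho u|\le \|\rho\|_\infty |u|$, while $\|\rho\|_\infty |u| \in D(h_0)$ because $h_0$ is a Dirichlet form (so $|u| \in D(h_0)$) and $D(h_0)$ is closed under scalar multiplication. Since $D(h_0)$ is an ideal of $D(\amax)$, this forces $\rho u \in D(h_0)$. The norm bound is now immediate: since $V^+(\rho u) \le \|\rho\|_\infty^2 V^+(u)$, the above estimate on $\amax(\rho u)$ gives
\[
  h_0(\rho u) = \amax(\rho u) + V^+(\rho u) \le 2\|\rho\|_\infty^2 h_0(u) + 2c(h_0+1)(u),
\]
which together with the $L^2$-bound shows that multiplication by $\rho$ is continuous on $D(h_0)$ endowed with its graph norm. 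Finally, for general $u \in D(h_0)$, the result follows by taking a sequence $(u_n)$ in $D(h_0)\cap L^\infty$ with $u_n \to u$ in $D(h_0)$ and using that $\rho u_n \to \rho u$ in $L^2$ together with the uniform bound on $h_0(\rho u_n)$ and the closedness of $h_0$.

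The main obstacle is the applicability of the ideal property, which requires first checking that $\rho u$ lies in $D(\amax)$ before one is entitled to conclude membership in $D(h_0)$; everything else is a routine product-rule computation controlled by the form bound on $\<\cA_0^\s\nabla\rho,\nabla\rho\>$.
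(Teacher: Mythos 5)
Your proof is correct and follows essentially the same route as the paper's: estimate $\amax(\rho u)$ via the product rule and the form bound on $\<\cA_0^\s\nabla\rho,\nabla\rho\>$ for $u\in D(h_0)\cap L^\infty$, conclude $\rho u\in D(h_0)$ from the ideal property using $|\rho u|\le\|\rho\|_\infty|u|$, and extend to all of $D(h_0)$ by the core property. The only cosmetic difference is that the paper phrases the $V^+$-part as ``$\rho$ is a bounded multiplication operator on $\Q(V^+)$'' up front, while you fold it into the final estimate.
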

\begin{proof}
Firstly, observe that $\rho$ is a bounded multiplication operator on $\Q(V^+)$.
Secondly, let $u\in D(h_0)\cap L^\infty$.
Then $\rho u\in W^{1,1}_\loc$ and
\begin{equation}\label{eq-bdd-multipl}
\int \<\cA_0^\s \nabla(\rho u),\nabla (\rho u)\>
\le 2c \bigl( h_0(u) + \|u\|_2^2 \bigr) + 2 \|\rho \|_\infty^2 h_0(u).
\end{equation}
So $\rho u \in D(\amax)$ and $|\rho u|\le \|\rho\|_\infty |u|$.
By the ideal property we obtain $\rho u\in D(h_0)$.
Since $D(h_0)\cap L^\infty$ is a core for $h_0$,
the assertion now follows from~\eqref{eq-bdd-multipl}.
\end{proof}

\begin{remark}
By a similar argument as above one can show the following converse of Lemma~\ref{bdd-multipl}:
If $\rho \in W^{1,1}_\loc \cap L^\infty$
is a bounded multiplication operator on $D(h_0)$, then there exists a $c\ge0$ such that
$\inner{\cA_0^\s \nabla\rho}{\nabla\rho} \le c(h_0+1)$.
For this one does not need to assume that $D(h_0)$ is an ideal of $D(\amax)$.
\end{remark}

\begin{proof}[Proof of Theorem~\ref{thm-extension}]
We prove the assertion for all $q\in(p_-,p_{\max})$;
then by duality it also holds for all $q\in(p_{\min},p_+)$.
Fix $p\in(p_-,p_+)$. 
We shall show that there exist $M,\mu,\omega\ge0$ such that
\begin{align}
\|\rho_\xi^{-1} S_p(t) \rho_\xi\|_{p\to p}
 &\le Me^{\mu|\xi|^2t+\omega t}, \label{ext1} \\[0.5ex]
\|S_p(t)\|_{p\to \frac{N}{N-2}p}
 &\le Mt^{-1/p}e^{\omega t} \label{ext2}
\end{align}
for all $t>0$ and $\xi\in\R^N$, where $\rho_\xi(x) := e^{-\langle\xi, x\rangle}$;
then \cite[Proposition~2.8]{lsv02} yields the assertion for all
$q\in[p,\frac{N\vphantom{N^t}}{N-2}p)$.

For the proof of~\eqref{ext1} fix $\xi\in\R^N$ and $n\in\N$,
and set $\rho_n=\rho_\xi\wedge n$. Then
\[
  W_{\lrho_n}
  := \frac{\<\cA_0^s\nabla \rho_n, \nabla \rho_n\>}{\rho_n^2}
  = \<\cA_0^s\xi,\xi\>\ind_{[\rho_\xi\le n]}
  \le c_2 |\xi|^2 =: K
\]
with $c_2$ as in assumption~\eqref{uell}.
In particular, $\rho_n$ is a bounded multiplication
operator on $D(h_0)$, by Lemma~\ref{bdd-multipl}.
Applying Proposition~\ref{weight-Um}\ref{weight-Um(b)} with $\eps=1$,
one finds $\mu,\omega>0$, independent of $\xi$ and $n$, such that
\[
  \|\rho_n^{-1}S_\Up(t)\rho_n\|_{p\to p}
  \le \exp\bigl((\omega+\mu|\xi|^2)t\bigr)
\]
for all $t\ge0$ and $\mkU\in\U$.
Now let $f\in L^p$ be such that $\rho_\xi f\in L^p$.
Choosing $U = (\smash{\mkUhat} - m)^+$ and passing to the limit $m \to \infty$,
we infer from the previous estimate that
\[
  \|\rho_n^{-1}S_p(t)\rho_n f\|_{p\to p}
  \le \exp\bigl((\omega+\mu|\xi|^2)t\bigr) \|f\|_p
  \avoidbreak
\]
for all $t\ge0$.
Then by Fatou's lemma we conclude that~\eqref{ext1} holds with $M=1$.

By Theorem~\ref{pcomplex330}, the estimate \eqref{mu-omega} and the assumption
$D(h_0) \subset L^{\frac{2N}{N-2}}$ (combined with the closed graph theorem),
there exists a $\delta>0$ such that
\[
  \RRe\dual{(\omega_p+1+\gen_p)u}{u|u|^{p-2}}
  \ge \mu_p h_0(u|u|^{p/2-1}) + \| \+ |u|^{p/2}\|_2^2
  \ge \delta\|u\|_{\vphantom{N^t}\smash{\frac{N}{N-2}}p}^p
\]
for all $u\in D(\gen_p)$.
Now let $f\in L^p$, and set $u_t = \smash{e^{-t(\omega_p+1+\gen_p)}f}$ for all $t>0$.
Using H\"older's inequality and the analyticity of $S_p$ one deduces
that there exists a $C>0$ such that
\[
  \delta\|u_t\|_{\frac{N}{N-2}p}^p
  \le \RRe\dual{(\omega_p+1+\gen_p)u_t}{u_t|u_t|^{p-2}}
  \le \frac{C}{t} \|f\|_p^p
\]
for all $t>0$. It follows that
$\smash{\|e^{-t(\omega_p+1+\gen_p)}f\|_{\frac{N}{N-2}p}}
\le \bigl(\frac{C}{\delta t}\bigr)^{-1/p}\|f\|_p$ for all $t>0$,
which proves~\eqref{ext2}.
\end{proof}

\begin{remark} \label{rcomplex504}
In the case $N \le 2$ a similar result can be proved
if one replaces the assumption of the Sobolev embedding theorem
with the following Gagliardo--Nirenberg inequality: for all $r\in (2,\infty]$
with $\theta := \frac{N}{2}(1-\rfrac2r) < 1$ there exists a $c>0$ such that
\[
  \|v\|_r \le c [(h_0+1)(v)]^{\theta/2}\|v\|_2^{1-\theta}
\]
for all $v\in D(h_0)$.
Then a minor adaptation of the above proof shows that in the case $N=2$ the semigroup
$S_p$ extrapolates to an analytic $C_0$-semigroup on $L^q$ for all $q\in(1,\infty)$;
in the case $N=1$ the semigroup $S_p$ extrapolates to an analytic $C_0$-semigroup on $L^q$
for all $q\in[1,\infty)$, and the integral kernel satisfies Gaussian upper bounds.
For $N=1$ see also \cite{da95}.
\end{remark}

\section*{Acknowledgment}
Parts of this paper were written whilst the first and the fourth named authors visited Swansea University
and the second named author visited The University of Auckland. The authors are grateful to these institutions for the support.
In addition the first named author wishes to thank the LMS for financial support by the LMS visitors program.
Part of this work is supported by the Marsden Fund Council from
Government
funding, administered by the Royal Society of New Zealand.

The authors thank J\"urgen Voigt for communicating the proof of Lemma~\ref{dense}.

\bigskip
\bigskip

\noindent
\parbox[t]{7.5cm}{\small
A.F.M. ter~Elst\\
Department of Mathematics\\
The University of Auckland\\
Private bag 92019\\
Auckland 1142\\
New Zealand\\
{\tt terelst@math.auckland.ac.nz}
}
\parbox[t]{6.5cm}{\small
Vitali Liskevich\\
Department of Mathematics\\
Swansea University\\
Singleton Park, Swansea, SA2 8PP\\
Wales, UK
}

\bigskip
\bigskip

\noindent
\parbox[t]{7.5cm}{\small
Zeev Sobol\\
Department of Mathematics\\
Swansea University\\
Singleton Park, Swansea, SA2 8PP\\
Wales, UK\\
{\tt z.sobol@swansea.ac.uk}
}
\parbox[t]{6.5cm}{\small
Hendrik Vogt\\
Fachbereich 3 -- Mathematik\\
Universit\"at Bremen\\
Postfach 330 440\\
28359 Bremen, Germany\\
{\tt 
hendrik.vo\rlap{\textcolor{white}{hugo@egon}}gt@uni-\rlap{\textcolor{white}{%
hannover}}bremen.de}
}

\end{document}